\documentclass[reqno]{amsart}

\usepackage{amssymb}
\usepackage{amsthm}
\usepackage{amsfonts}
\usepackage{amsmath}
\usepackage{pmboxdraw}
\usepackage{verbatim} 
\usepackage{graphicx}
\usepackage{color}
\usepackage[colorlinks=true, citecolor=blue, filecolor=black, linkcolor=black, urlcolor=black]{hyperref}
\usepackage{cite}
\usepackage[normalem]{ulem}
\usepackage{subcaption}
\usepackage{bbm}
\usepackage{bm}
\usepackage{mathtools}
\usepackage{todonotes}
\usepackage{enumitem}
\usepackage{multirow,makecell}  
\usepackage{tablefootnote}
\usepackage{braket}

\usepackage[table]{xcolor}

\usepackage{xparse}
\usepackage{booktabs}
\usepackage{tcolorbox}
\tcbuselibrary{most}
\newtcolorbox{defn}[1]{breakable,
colbacktitle=gray!50!white, fonttitle=\bfseries, coltitle=black, title=Definition: {#1}}

\usepackage{tikz}
\usetikzlibrary{arrows.meta,positioning,calc}

\def\cN{\mathcal{N}}

\def\funcA{\mathfrak{A}}
\def\funcB{\mathfrak{B}}

\def\C{\mathbb{C}}
\def\D{\mathbb{D}}
\def\E{\mathbb{E}}
\def\HH{\mathbb{H}}

\def\R{\mathbb{R}}

\newcommand{\erfc}{\operatorname{erfc}}

\newcommand{\Tr}{\operatorname{Tr}}
\newcommand{\bfA}{\mathbf{A}}
\newcommand{\bfB}{\mathbf{B}}

\newcommand{\bfE}{\mathbf{E}}

\newcommand{\bfL}{\mathbf{L}}
\newcommand{\bfM}{\mathbf{M}}
\newcommand{\bfQ}{\mathbf{Q}}

\newcommand{\Li}{\operatorname{Li}}
\newcommand{\sgn}{\operatorname{sgn}}
\newcommand{\Var}{\operatorname{Var}}
\newcommand{\Cov}{\operatorname{Cov}}

\newcommand{\im}{\operatorname{Im}} 

\theoremstyle{plain}
\newtheorem*{thm*}{Theorem}
\newtheorem{thm}{Theorem}[section]
\newtheorem{lem}[thm]{Lemma}
\newtheorem{cor}[thm]{Corollary}
\newtheorem{prop}[thm]{Proposition}
\newtheorem*{prop*}{Proposition}
\newtheorem*{lem*}{Lemma}

\theoremstyle{definition}
\newtheorem{eg*}{Example}
\newtheorem{egs*}{Examples}
\newtheorem*{Q*}{Question}

\theoremstyle{remark} 
\newtheorem{rem}{Remark}
\newtheorem*{rmk*}{Remark}
\newtheorem*{rmks*}{Remarks}

\numberwithin{equation}{section}

\begin{document}
\title{Disc counting statistics of the real Ginibre ensemble}  

\author{Sung-Soo Byun}
\address{Department of Mathematical Sciences and Research Institute of Mathematics, Seoul National University, Seoul 151-747, Republic of Korea}
\email{sungsoobyun@snu.ac.kr}

\author{Yong-Woo Lee}
\address{Department of Mathematical Sciences, Seoul National University, Seoul 151-747, Republic of Korea}
\email{hellowoo@snu.ac.kr}


\subjclass[2020]{Primary 60B20; Secondary 33C45}


\begin{abstract}
We study the disc counting statistics of the real Ginibre ensemble, whose spectrum consists of real and non-real eigenvalues, the latter occurring in complex-conjugate pairs. As the matrix dimension increases, we derive the asymptotic behaviour of the joint cumulants of the numbers of real and non-real eigenvalues contained in a centred disc in the bulk, edge, and exterior regimes of the circular law. As consequences, we establish a joint central limit theorem and obtain the conjectured asymptotics for the number variance. Furthermore, our results reveal a universality phenomenon: although the fluctuations of the real and non-real eigenvalues separately retain dependence on the symmetry class, their combined fluctuations exhibit the same limiting behaviour as in the previously established complex and symplectic Ginibre ensembles. 
\end{abstract}

\maketitle

\section{Introduction}

Counting statistics is one of the most fundamental observables associated with random point processes. Given a domain $D$, it records the number of particles contained in $D$, thereby quantifying global fluctuations beyond those encoded by local correlation functions. Understanding the asymptotic behaviour of its variance, higher-order cumulants, and limiting distributions as the system size tends to infinity has therefore become a central theme in the study of point processes. 
In particular, the growth of the number variance provides a signature of important statistical properties such as hyperuniformity and rigidity, while central limit theorems reveal universal fluctuation phenomena in strongly correlated particle systems. As a result, counting statistics has been studied extensively for a wide variety of models, including determinantal point processes, Coulomb gases, and other interacting particle systems.

Counting statistics has also become a central topic in non-Hermitian random matrix theory, where the Ginibre ensembles provide canonical examples; see \cite{BF25} for a review.  
They comprise three symmetry classes: the real, complex, and symplectic Ginibre ensembles, denoted by GinOE, GinUE, and GinSE, respectively. All three ensembles possess a rich algebraic structure: GinUE forms a determinantal point process, whereas GinOE and GinSE form Pfaffian point processes.

From a statistical physics point of view, the GinUE admits an interpretation as a rotationally invariant planar Coulomb gas \cite{Fo10,BF25a}, whereas the GinSE is naturally associated with a Coulomb gas having complex-conjugation symmetry. Although the GinSE lacks rotational symmetry, its Pfaffian structure nevertheless allows the angular variables to be integrated out explicitly; see e.g. \cite{Rider03,AIS14}. Owing to these features, radial counting observables for the GinUE and GinSE are remarkably tractable and have consequently been studied extensively. Counting statistics for these ensembles has also attracted considerable attention in the physics literature, particularly in connection with rotating free fermions; see e.g. \cite{LGCCKMS19,LGMS18,LMS19,MMS14,SLMS21,SLMS22,GDMK26} and references therein. More recently, Charlier obtained remarkably precise asymptotic expansions for the full counting statistics \cite{Ch22} as well large deviation probabilities \cite{Ch23} (see too the earlier work \cite{CMV16}), thereby initiating a series of subsequent works establishing similar results for a variety of related models within these symmetry classes \cite{ACCL23,ACCL24,ACM24,CL23,ABE23,ABES23,BP26,Noda25,AFLS25,BC25}.

In contrast, the GinOE presents a fundamentally different feature. Unlike the GinUE and GinSE, whose spectra consist entirely of non-real eigenvalues, the spectrum of the GinOE contains both real eigenvalues and pairs of non-real complex conjugate eigenvalues. Consequently, the spectrum no longer forms a single-species point process but instead consists of two interacting components supported on different dimensions. This mixed-dimensional structure fundamentally complicates the analysis and prevents the radial decomposition that underlies the study of counting statistics in the GinUE and GinSE. As a consequence, even the fundamental problem of counting eigenvalues inside a disc becomes substantially more challenging and remains largely open.

This intrinsic difficulty is reflected in the existing literature. Existing results on the GinOE have almost exclusively focused on \textit{one component of the spectrum} at a time, namely either the real eigenvalues or the non-real eigenvalues. Real eigenvalue statistics of the GinOE also arise naturally in applications, notably in the counting and stability analysis of equilibria in high-dimensional random dynamical systems; see e.g. \cite{BFK21,Fyo16,FK16,May72,Kiv24}. For the real eigenvalues, counting statistics have been studied extensively, including law of large numbers \cite{EKS94,FN07}, central limit theorems \cite{Si17,FS23a,Fo24}, and large deviation probabilities \cite{KPTTZ15,ABL25,AK07,MPTW16}.  For the non-real eigenvalues, the Gaussian  fluctuation was established recently in \cite{GLX24}. Such results, predominantly concerning the real eigenvalues, have also been established for a variety of other observables \cite{CESX22,Fo15a,XZ24,BLO26,BJLS25,ATW14,RS14,LM26}, as well as for various
generalisations of the GinOE \cite{AB23,BKLL23,BN25,Fo14,Fo25,FI16,FIK20,FK18,FM12,FN08,GP19,Si17a,BL24,LMS22,TV15}. 

While these works provide a detailed understanding of the fluctuations of each component individually, they do not describe the fluctuations of the full spectrum. Indeed, disc counting statistics depend fundamentally on the \emph{joint fluctuations} of the real and non-real eigenvalues and therefore cannot be recovered from separate analyses of the two components. Results treating both parts of the spectrum simultaneously remain extremely scarce. One notable exception is a recent work \cite{CEK26}, which established hyperuniformity for general real i.i.d.\ random matrices by showing that the number variance grows polynomially more slowly than its expectation.

The main purpose of the present paper is to provide a comprehensive study of disc counting statistics for the GinOE. In Theorem~\ref{thm: cumulants}, we derive explicit asymptotic formulas for the \textit{joint cumulants} of the numbers of real and non-real eigenvalues contained in a disc in all natural macroscopic regimes, namely in the bulk of the circular law, near the spectral edge, and outside the limiting droplet; see Figure~\ref{fig:examples}. As a consequence, in Corollary~\ref{cor: CLT}, we establish a joint central limit theorem for the two counting functions and determine their covariance structure. In particular, we obtain the asymptotic number variance, thereby confirming the conjectures proposed in a recent work \cite[Conjecture~2.10]{ABES23}; cf. Table~\ref{tab:main-statements}.
It is worth noting that while the number of non-real eigenvalues is typically of order $N$ and that of the real eigenvalues is only of order $\sqrt{N}$, the variances of both counting functions, as well as their covariance, are all of order $\sqrt{N}$; cf. Proposition~\ref{prop: number variance}. 

Beyond resolving the conjectures on the number variance asymptotics, our results show a new universality phenomenon. Considered separately, the real and non-real eigenvalue counting functions each exhibit symmetry-dependent fluctuations that differ substantially from those of the GinUE and GinSE. Remarkably, these symmetry-dependent contributions
cancel exactly when the two species are combined. Consequently, the
limiting cumulants of the disc counting function for the entire spectrum
coincide with those of the GinUE and GinSE in all macroscopic regimes.
Thus, although universality across different symmetry classes is lost at the level of individual species, it is restored for the full spectrum through the joint fluctuations of the real and non-real eigenvalues. We refer the reader to Remark~\ref{Rem_universality} for a more detailed discussion.

\begin{figure}[t]
    \begin{subfigure}{0.32\textwidth}
        \begin{center}
            \includegraphics[width=\linewidth]{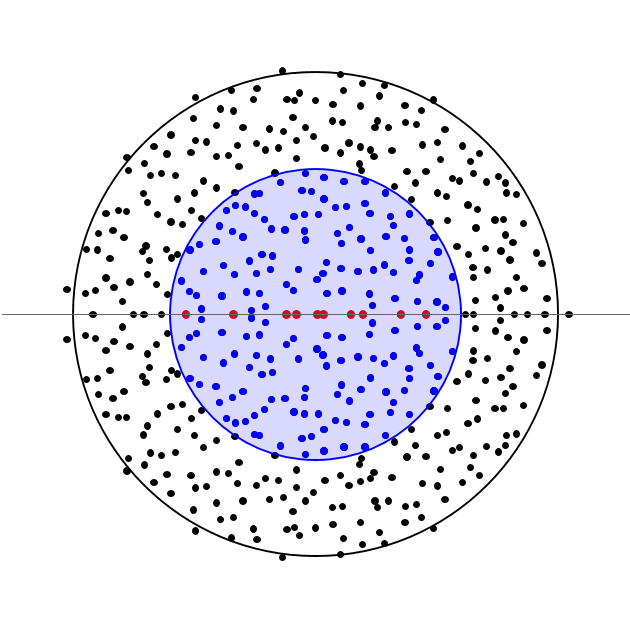}
        \end{center}
        \subcaption{ Bulk ($R=0.6$) }  
    \end{subfigure}
    \begin{subfigure}{0.32\textwidth}
        \begin{center}
            \includegraphics[width=\linewidth]{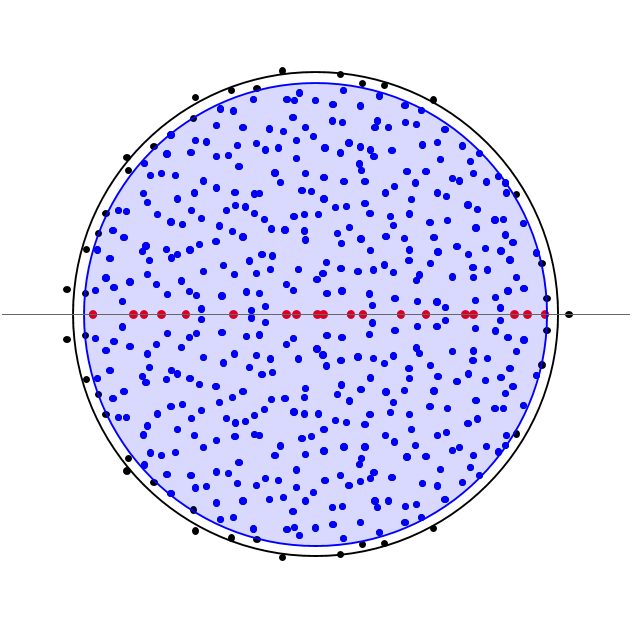}
        \end{center}
        \subcaption{ Edge ($R=1-1/\sqrt{N}$)  } 
    \end{subfigure}
    \begin{subfigure}{0.32\textwidth}
        \begin{center}
            \includegraphics[width=\linewidth]{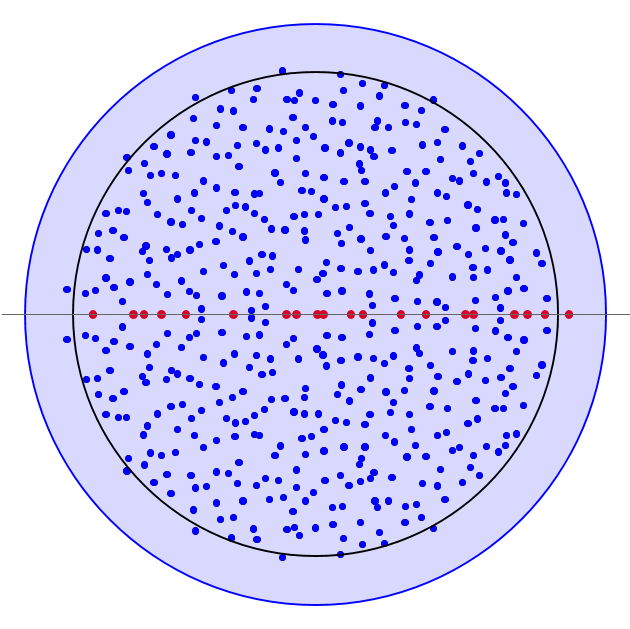}
        \end{center}
        \subcaption{ Outside ($R=1.2$)  }
    \end{subfigure} 
    \caption{ 
    Three different regimes for disc counting statistics of radius $R$. The figures used a single realisation of the GinOE of size $N=500$. The real eigenvalues and complex conjugate pairs inside the discs are coloured red and blue, respectively. 
    }
    \label{fig:examples}
\end{figure}

\begin{table}[t]
\centering
\renewcommand{\arraystretch}{1.3}
\setlength{\tabcolsep}{10pt}
\begin{tabular}{lccc}
\toprule
\rowcolor{gray!12}
Observable
& Variance
& CLT
& Higher cumulants
\\
\midrule
Real
& \cite{FN07,FS23a}
& \cite{FS23a,Fo24}
& Theorem~\ref{thm: cumulants}
\\[0.2em]

Non-real
& \cite{GLX24}
& \cite{GLX24}
& Theorem~\ref{thm: cumulants}
\\[0.2em]

Full spectrum
& Proposition~\ref{prop: number variance}
& Corollary~\ref{cor: CLT}
& Theorem~\ref{thm: cumulants}
\\
\bottomrule
\end{tabular}
\caption{Summary of known and present results on counting statistics for the GinOE.}
\label{tab:main-statements}
\end{table}

\section{Main results}

Let us now state our results more precisely. The GinOE is the ensemble of $N\times N$ random matrices whose entries are independent centred real Gaussian random variables with variance $1/N$. It is well known that, as $N\to\infty$, the empirical spectral measure converges weakly to the uniform probability measure on the unit disc centred at the origin, known as the circular law; see e.g. \cite{Ed97}. 

To describe the disc counting statistics, let $R \in (0,\infty]$ and consider the centred disc of radius $R$. We denote by $\cN_N^{\mathrm r}(R)$ and $\cN_N^{\mathrm c}(R)$ the numbers of real and non-real eigenvalues of the GinOE contained in this disc, respectively. (Here, the superscripts ``$\mathrm r$'' and ``$\mathrm c$'' stand for \emph{real} and \emph{complex}; although the latter counts only the non-real eigenvalues, we adopt this notation for simplicity.) Their sum,
\begin{equation}
\cN_N(R):=\cN_N^{\mathrm r}(R)+\cN_N^{\mathrm c}(R),
\end{equation}
is the total number of eigenvalues contained in the disc. Note in particular that 
\begin{equation} \label{total variance formula}
\Var \cN_N(R)= \Var \cN_N^{\mathrm r}(R) + \Var \cN_N^{\mathrm c}(R) + 2 \Cov \big( \cN_N^{\mathrm r}(R),\cN_N^{\mathrm c}(R) \big).  
\end{equation}

As previously mentioned, extensive work has been devoted to the counting statistics of the real eigenvalues. It follows from \cite{EKS94} that for every (fixed) $R>0$, 
\begin{equation} \label{expected number real evs_EKS}
\lim_{N \to \infty} \frac{ \mathbb{E} \cN_N^{ \rm r }(R) }{ \sqrt{2N/\pi} } =  \min\{R,1\}. 
\end{equation}
Moreover, it was established in \cite{FN07,FS23a} that 
\begin{equation} \label{number variance real evs_FS}
 \lim_{N \to \infty} \frac{ \Var \cN_N^{ \rm r }(R) }{ \sqrt{2N/\pi} } = (2-\sqrt{2})  \min\{R,1\}. 
\end{equation} 
By \eqref{expected number real evs_EKS} and \eqref{number variance real evs_FS}, both the expected number of real eigenvalues and its variance are of order $\sqrt{N}$.  

In contrast, much less is known about the counting statistics of the non-real eigenvalues. The expected number of non-real eigenvalues follows from \cite{EKS94} and, at leading order, is simply proportional to the area of the counting domain, as predicted by the convergence to the circular law. In particular,
\begin{equation} \label{expected number complex evs_EKS}
    \lim_{N\to\infty}
    \frac{\mathbb{E}\cN_N^{\mathrm c}(R)}{N}
    =
    \min\{R^2,1\};
\end{equation}
see also \cite[Proposition~2.1]{ABES23}. 
The fluctuations, however, are considerably more subtle. The only available variance asymptotics are due to \cite{GLX24}, where the counting domain is assumed to be contained in the unit disc and strictly separated from the real axis. More precisely, for any Jordan domain $D\subset\{|z|<1\}$ whose closure is bounded away from the real axis, it was shown that 
\begin{equation} \label{number variacne length GLX24}
    \Var\,( \textup{\# of eigenvalues in $D$} )
    \sim
    \sqrt{N}\,
    \frac{\ell(\partial D)}{2\pi\sqrt{\pi}}, \qquad \textup{if } D \cap \R = \varnothing, 
\end{equation}
where $\ell(\partial D)$ denotes the perimeter of $D$. On the other hand, the variance of the number of non-real eigenvalues in domains intersecting the real axis has remained unknown. 

Furthermore, the covariance between the numbers of real and non-real eigenvalues has remained largely unexplored. While it is natural to expect these quantities to be negatively correlated, the asymptotic behaviour of their covariance has, to the best of our knowledge, never been quantified rigorously. 
Our first result fills this gap by providing a complete description of the asymptotic covariance structure, including the number variances of the real, non-real, and total eigenvalue counts.

We mention that as in most of the existing literature, we restrict our attention to the case where the matrix dimension $N$ is even, as this simplifies the presentation; see, however, \cite{FM09,Si07} for treatments of the odd-dimensional case.

\begin{prop}[\textbf{Number variance and covariance asymptotics}] \label{prop: number variance}
    Let $N$ be a positive even integer, and $R\equiv R_N$ be a positive real number.
    \begin{enumerate}[label=\textup{(\roman*)}]
        \item \textup{\textbf{(Bulk)}}
        Suppose that $R\in(0,1)$ is fixed. Then 
        \begin{equation}
            \lim_{N\to\infty}\frac{ \Var \cN_N^{ \rm c}(R) }{ \sqrt{2N/\pi} } = 2R , \qquad  \lim_{N\to\infty}\frac{ \Var \cN_N^{\rm r}(R) }{ \sqrt{2N/\pi} } = (2-\sqrt{2}) R,
        \end{equation}
        and
        \begin{equation}
            \lim_{N\to\infty}  \dfrac{\Cov( \cN_N^{ \rm r }(R), \cN_N^{ \rm c }(R) )  }{  \sqrt{2N/\pi} } = -(2-\sqrt{2}) R.
        \end{equation}
        Consequently, we have
        \begin{equation} \label{number variance_total bulk}
            \lim_{N\to\infty}\frac{ \Var \cN_N(R) }{ \sqrt{2N/\pi}  } = \sqrt{2}R.
        \end{equation}
        
        \item \textup{\textbf{(Edge)}} 
        Suppose that $R = 1- s/\sqrt{2N}$ for some $s\in\R$. Then 
        \begin{equation}
            \lim_{N\to\infty}\frac{ \Var \cN_N^{ \rm c }(R) }{ \sqrt{2N/\pi} } = \sqrt{2}\,\mathcal{F}(s) + 2-\sqrt{2}, \qquad  \lim_{N\to\infty}\frac{ \Var \cN_N^{ \rm r }(R) }{ \sqrt{2N/\pi} } = 2-\sqrt{2},
        \end{equation}
        and
        \begin{equation}
             \lim_{N\to\infty} \dfrac{  \Cov( \cN_N^{ \rm r }(R), \cN_N^{ \rm c }(R) )  }{  \sqrt{2N/\pi} } = -(2-\sqrt{2}),
        \end{equation}
        where
        \begin{equation} \label{def of f number variance}
            \mathcal{F}(s)  :=  \sqrt{ 2\pi }   \int_{-\infty}^{s} \frac{\erfc(t)\erfc(-t)}{4}  \,dt.
        \end{equation}
        Consequently, we have
        \begin{equation} \label{number variance_total edge}
            \lim_{N\to\infty} \frac{\Var \cN_N(R)}{\sqrt{2N/\pi}} = \sqrt{2} \, 
            \mathcal{F}(s).
        \end{equation} 
        
        \item \textup{\textbf{(Outside)}}
        Suppose that $R \in (1,\infty]$ is fixed. Then 
        \begin{equation}
            \lim_{N\to\infty}\frac{\Var \cN_N^{ \rm c }(R) }{\sqrt{2N/\pi}} = 2-\sqrt{2}, \qquad \lim_{N\to\infty}\frac{\Var \cN_N^{\rm r  }(R) }{\sqrt{2N/\pi}} = 2 - \sqrt{2},
        \end{equation}
        and
        \begin{equation}
            \lim_{N\to\infty} \frac{ \Cov( \cN_N^{ \rm r }(R), \cN_N^{ \rm c }(R) ) }{ \sqrt{2N/\pi} } = -(2-\sqrt{2}).
        \end{equation}
        Consequently, we have
        \begin{equation} \label{number variance_total outside}
            \lim_{N\to\infty} \frac{\Var \cN_N(R) }{\sqrt{2N/\pi}} = 0.
        \end{equation}
    \end{enumerate}
\end{prop}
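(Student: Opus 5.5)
Proposition~\ref{prop: number variance} will follow once we have the second-order joint statistics of $(\cN_N^{\rm r}(R),\cN_N^{\rm c}(R))$. The real-eigenvalue variance is already known from \eqref{number variance real evs_FS}, which gives $(2-\sqrt2)\min\{R,1\}$ in every regime; at the edge $\min\{R,1\}\to 1$. So it remains to compute $\Var\cN_N^{\rm c}(R)$ and the covariance. Then \eqref{total variance formula} gives the totals \eqref{number variance_total bulk}, \eqref{number variance_total edge} and \eqref{number variance_total outside}.

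\textbf{Complex variance.} I would use the Pfaffian structure of the GinOE, namely the Borodin--Sinclair/Forrester--Nagao skew-orthogonal kernels. With these, $\Var\cN_N^{\rm c}(R)$ equals the expectation term minus a double integral over $\{|z|,|w|<R\}$ of the $2\times2$ Pfaffian correlation kernel in the complex--complex sector. I would split this kernel into a ``GinUE-like'' part, built from $e^{-N|z-w|^2}$-type terms, and a ``reflection'' part involving $z$ and $\bar w$, which is localized near the real axis.
\begin{itemize}
\item The GinUE-like part reproduces the boundary contribution from \eqref{number variacne length GLX24}. For $\ell(\partial D)=2\pi R$ this gives $\sqrt N\, R/\sqrt\pi=\sqrt{2N/\pi}\cdot\sqrt2 R/2$. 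Since each non-real eigenvalue is paired with its conjugate, the count doubles, so I would check whether the bulk value $2R$ arises as $\sqrt2 R$ plus an extra $(2-\sqrt2)R$ from the real axis.
\item The reflection part, integrated over a strip of width $N^{-1/2}$ along $[-R,R]$, gives a term of order $\sqrt N$ proportional to the length of $[-R,R]\cap[-1,1]$. I expect it to be $(2-\sqrt2)\min\{R,1\}$.
\end{itemize}
This is consistent with all three claimed values: $2R$ in the bulk, $\sqrt2\mathcal F(s)+2-\sqrt2$ at the edge, and $2-\sqrt2$ outside.

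\textbf{Covariance.} Rather than computing the mixed real--complex correlation directly, I would use the exact identity $\cN_N^{\rm r}(\infty)+\cN_N^{\rm c}(\infty)=N$. This gives $\Cov(\cN^{\rm r}(R),\cN^{\rm c}(R))=-\Var\cN^{\rm r}(R)-\Cov(\cN^{\rm r}(R),\cN^{\rm r}(\infty)-\cN^{\rm r}(R))+\Cov(\cN^{\rm r}(R),\cN^{\rm c}(\{|z|>R\}))$. I would argue that the last two cross terms are $o(\sqrt N)$ in the bulk, because of decorrelation at distance $\gg N^{-1/2}$: only the two points $\pm R$ on the real axis contribute, and they give $O(1)$. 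This yields $-(2-\sqrt2)R$. Outside the droplet, $\cN^{\rm c}(R)=N-\cN^{\rm r}(R)$ up to an exponentially small probability, so the covariance is $-\Var\cN^{\rm r}$ directly and $\Var\cN^{\rm c}=\Var\cN^{\rm r}$. At the edge, the same identity works, but the cross terms near $\pm1$ must be shown to be $o(\sqrt N)$.

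\textbf{Main obstacle.} The hardest step is the edge asymptotics of the complex-kernel double integral. There one needs uniform asymptotics of incomplete gamma functions $\Gamma(k,N|z|^2)/\Gamma(k)$ summed over $k$, which produce the $\erfc(t)\erfc(-t)$ profile in $\mathcal F$. I would first reduce the GinUE-like part, which is radial, to a one-dimensional sum over $k$, as in the GinUE computation. I would then treat the real-axis reflection part via its known local edge limit near $z=\pm1$, showing that it contributes $o(\sqrt N)$ beyond the bulk value.
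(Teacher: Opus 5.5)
Your route (correlation kernels plus sum rules) is genuinely different from the paper's, and parts of it are sound in outline. The identity $\cN_N^{\rm r}(\infty)+\cN_N^{\rm c}(\infty)=N$, together with decay of truncated correlations at scale $N^{-1/2}$, does give $\Cov(\cN_N^{\rm r}(R),\cN_N^{\rm c}(R))=-\Var\cN_N^{\rm r}(R)+O(1)$; the last term in your identity should carry a minus sign, which is harmless. For $R>1$, a first-moment bound on the number of eigenvalues outside the disc settles the exterior regime. Since \eqref{number variance real evs_FS} is stated for fixed $R$, the edge value of $\Var\cN_N^{\rm r}$ also needs a short comparison argument.

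The genuine gap is $\Var\cN_N^{\rm c}(R)$ in the bulk and edge regimes. Its real-axis contribution is the new and hardest part of the statement, and you never compute it. You \emph{expect} the reflection part to be $(2-\sqrt2)\min\{R,1\}$ and observe that this matches the claimed values; that is reverse-engineering, and nothing in the plan produces the constant $2-\sqrt2$. The split is also not clean, for several reasons:
\begin{itemize}
\item The variance is quadratic in the Pfaffian kernel entries, so the two parts interact.
\item Within $O(N^{-1/2})$ of the axis, the non-real density is depleted and vanishes linearly in $|\im z|$. The complex--complex sum rule fails there, being compensated by real--complex correlations, so the GinUE-like part need not be localised on the arc.
\item \eqref{number variacne length GLX24} is only known for domains away from $\R$.
\item The weight $e^{-N(z^2+\bar z^2)/2}\erfc(\sqrt{2N}|\im z|)$ is not rotation invariant, so the exact GinUE reduction to a one-dimensional sum over $k$ is unavailable.
\end{itemize}

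The missing idea is within reach of your own framework: apply the sum rule to $\cN_N^{\rm c}(R)$ itself. From $\cN_N^{\rm c}(R)=N-\cN_N^{\rm r}(\infty)-\cN_N^{\rm c}(\{|z|>R\})$ you get
\[
\Var\cN_N^{\rm c}(R)=-\Cov\bigl(\cN_N^{\rm c}(R),\cN_N^{\rm r}(\infty)\bigr)-\Cov\bigl(\cN_N^{\rm c}(R),\cN_N^{\rm c}(\{|z|>R\})\bigr).
\]
By your covariance argument, the first term is $\Var\cN_N^{\rm r}(R)+O(1)$, which is exactly where $2-\sqrt2$ comes from. The second term localises on the circle $|z|=R$. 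Away from $\pm R$ the statistics there are GinUE-like, and by conjugation symmetry the term equals four times the GinUE boundary term of the upper arc. After dividing by $\sqrt{2N/\pi}$, this is $\sqrt2R$, $\sqrt2\,\mathcal F(s)$, or $0$ in the three regimes. Even so, you would need uniform finite-$N$ decay and local-GinUE estimates for all three sectors of the GinOE kernel, near the real axis and at the edge. This is exactly the difficulty the paper points out in its closing remark on general domains.

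The paper avoids kernels entirely. Green's theorem turns the complex block of the generating function into $\bfL_N-\bfM_N-\bfE_{N,2}$ (arguments $R$ suppressed), where the segment $[-R,R]$ of $\partial(\D_R\cap\HH)$ produces the same matrix $\bfM_N$ that governs the real eigenvalues. Hence $\Var\cN_N^{\rm c}(R)=4(\Tr\bfL_N-\Tr\bfL_N^2)+4(\Tr\bfM_N-\Tr\bfM_N^2)+o(\sqrt N)$, and $2-\sqrt2$ falls out of $\Tr\bfM_N^m\sim\min\{R,1\}\sqrt{n/(\pi m)}$. The same computation gives all higher joint cumulants at once.
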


See Figure~\ref{fig: Numerics} for the numerical illustrations of Proposition~\ref{prop: number variance}.

\begin{figure}[t]
    \begin{subfigure}{0.45\textwidth}
        \begin{center}
            \includegraphics[width=\linewidth]{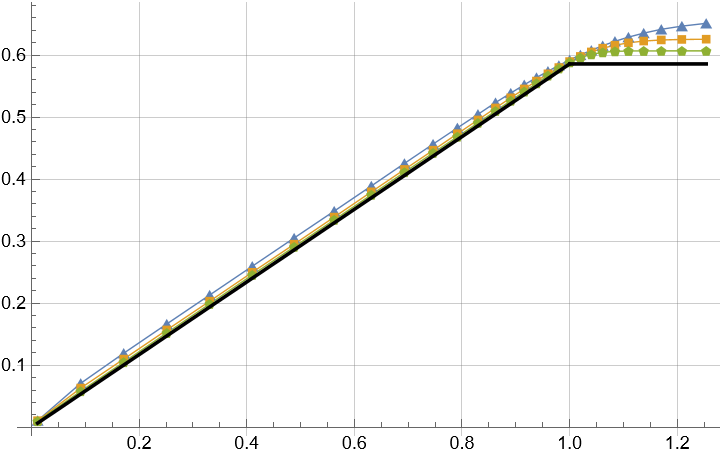}
        \end{center}
        \subcaption{$\Var \cN_N^{ \rm r }(R) / \sqrt{2N/\pi}$}  \label{fig:real_var}
    \end{subfigure}
    \quad
    \begin{subfigure}{0.45\textwidth}
        \begin{center}
            \includegraphics[width=\linewidth]{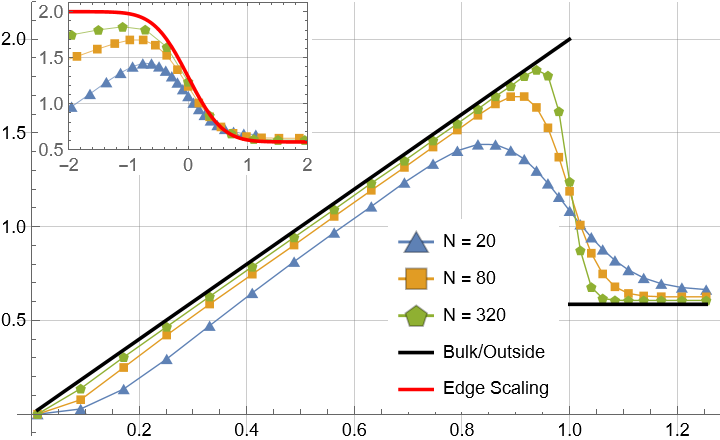}
        \end{center}
        \subcaption{$\Var \cN_N^{ \rm c }(R)/ \sqrt{2N/\pi}$}  \label{fig:comp_var}
    \end{subfigure}

    \vspace{1em}
    
    \begin{subfigure}{0.45\textwidth}
        \begin{center}
            \includegraphics[width=\linewidth]{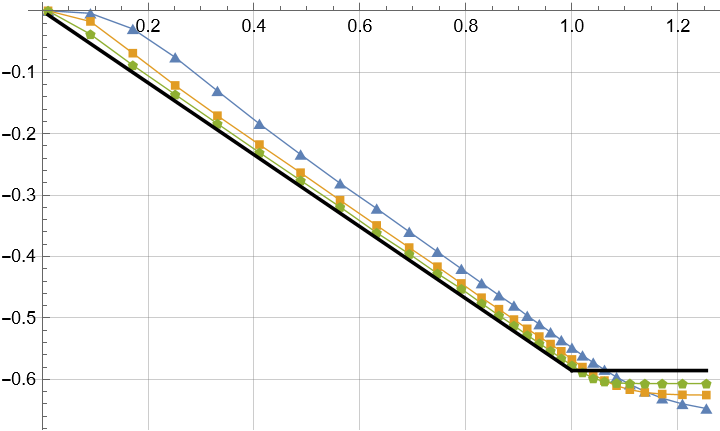}
        \end{center}
        \subcaption{$\Cov(\cN_N^{ \rm r }(R),\cN_N^{ \rm c }(R))/ \sqrt{2N/\pi}$}  \label{fig:cov}
    \end{subfigure}
    \caption{Numerical plots for the joint cumulants of $\cN_N^{ \rm c }(R)$ and $\cN_N^{ \rm r }(R)$. The inset in (B) depicts the edge scaling regimes. The cumulants are computed from $10^6$ independent samples of the GinOEs.
    }
    \label{fig: Numerics}
\end{figure}

As mentioned above, the asymptotic number variance for the real eigenvalues in the bulk and exterior regimes was previously established in \cite{FS23a,FN07}; see \eqref{number variance real evs_FS}. On the one hand, Proposition~\ref{prop: number variance} confirms \cite[Conjecture~2.10]{ABES23}. In fact, our results provide more information beyond this conjecture. While \cite{ABES23} proposed conjectural formulas for the covariance asymptotics in the bulk and exterior regimes, no conjecture was made for the edge regime. Proposition~\ref{prop: number variance} establishes the missing edge asymptotics and thereby completes the picture for all macroscopic scaling regimes. 

\begin{rem}[Heuristic explanation of the variance growth]
Proposition~\ref{prop: number variance} shows that the number variances of both the real and non-real eigenvalues are of order $\sqrt{N}$. These two statistics nevertheless exhibit rather different behaviour from the viewpoint of hyperuniformity, since the typical numbers of real and non-real eigenvalues are of orders $\sqrt{N}$ and $N$, as in \eqref{expected number real evs_EKS} and \eqref{expected number complex evs_EKS}, respectively.

There is, however, a natural heuristic explanation for the common $\sqrt{N}$-order scaling of their variances. From the perspective of statistical physics, number fluctuations are expected to be generated primarily through the boundary of the counting domain, leading to a variance proportional to its boundary length. This principle is also reflected in the asymptotic formula \eqref{number variacne length GLX24} for domains bounded away from the real axis.

For centred discs, the geometry is fundamentally different. The non-real eigenvalues form a two-dimensional point process, for which fluctuations are expected to be generated by the boundary of the counting domain. In the present setting, this boundary consists of the semicircular arc together with the diameter $(-R,R)$ lying on the real axis. Thus, in addition to the contribution from the semicircular boundary, the fluctuations of the non-real eigenvalues may contain a contribution associated with this real-axis segment. On the other hand, the real eigenvalues form a one-dimensional point process on
$\mathbb{R}$, and their counting statistic in the centred disc is precisely supported on the same interval $(-R,R)$. Hence the interval $(-R,R)$ plays a distinguished role in both statistics: it is a boundary component of the two-dimensional counting domain for the non-real eigenvalues and the counting domain itself for the real eigenvalues. This common one-dimensional geometry makes it natural that the corresponding number variances are of the same order. \end{rem}

\begin{rem}[Comparison with the number variance of the GinUE]
We note that the number variance of the GinUE, or more generally of
determinantal Coulomb gases (random normal matrix ensembles), has recently
been studied in \cite{FL22,MMO26,MAD24}. In particular, for the GinUE, it
was shown that the number variance asymptotic is precisely of the form
\eqref{number variacne length GLX24}. Intuitively, this is natural, since away from the real axis, several spectral statistics of the non-real eigenvalues of the GinOE are expected to agree, to leading order, with their GinUE counterparts; see, e.g. \cite{BS09,LO25,GLX24,BCD26,CL26,CESX22}.  

On the other hand, Proposition~\ref{prop: number variance} reveals an
intriguing feature of the number variance for the total eigenvalue count.
Indeed, \eqref{number variance_total bulk} implies that if $D$ is the
centred disc of radius $R\in(0,1)$, then
\begin{equation}\label{number variacne length R intersect}
    \Var \bigl(\#\{\textup{eigenvalues in }D\}\bigr)
    \sim
    \sqrt{N}\,
    \frac{\ell(\partial D)}{\pi\sqrt{\pi}},
    \qquad
    D\cap\mathbb R\neq\varnothing.
\end{equation}
This should be compared with the asymptotic formula
\eqref{number variacne length GLX24} for the non-real eigenvalues, where
the counting domain is assumed to be bounded away from the real axis.
The two formulas differ by a factor of $2$.

Let us give a heuristic interpretation of this factor. Write $ D_+ := D\cap\mathbb C_+.$  Away from the real axis, the non-real eigenvalues in $D_+$ are expected
to exhibit, to leading order, the same local fluctuations as the GinUE.
There is, however, an important subtlety in applying the usual boundary
law directly to $D_+$. Its boundary consists of the upper semicircular
arc together with the interval $D\cap\mathbb R$. The latter lies precisely
at the interface with the real eigenvalues, and therefore cannot be
regarded as an ordinary bulk boundary for the non-real eigenvalues.
For the total eigenvalue count, the contribution associated with this
real-axis boundary is cancelled by the fluctuations involving the real
eigenvalues, including their covariance with the non-real eigenvalue
count. Thus, heuristically, only the upper semicircular part of the
boundary contributes to the leading-order fluctuation.

Since this semicircular arc has length $\ell(\partial D)/2$, the GinUE
boundary law suggests
\[
    \Var\bigl(\#\{\textup{non-real eigenvalues in }D_+\}\bigr)
    \sim
    \sqrt{N}\,
    \frac{\ell(\partial D)}{4\pi\sqrt{\pi}},
\]
after the real-axis contribution has been cancelled as described above.
Finally, the non-real eigenvalues of the GinOE occur in complex-conjugate
pairs. Hence the fluctuations of the non-real eigenvalue counts in the
upper and lower half-discs are perfectly correlated: each eigenvalue in
$D_+$ contributes together with its conjugate in $D_-$. Consequently,
the corresponding fluctuation in the full disc is doubled, and its
variance is therefore multiplied by $4$. This gives
\[
    4\,
    \frac{\sqrt{N}\,\ell(\partial D)}
         {4\pi\sqrt{\pi}}
    =
    \sqrt{N}\,
    \frac{\ell(\partial D)}{\pi\sqrt{\pi}},
\]
in agreement with \eqref{number variacne length R intersect}.

This heuristic suggests that the additional factor $2$ compared with
the GinUE is a manifestation of the complex-conjugate pairing of the
GinOE spectrum, while the effect of the real axis itself is cancelled
in the total eigenvalue count through the joint fluctuations of the
real and non-real eigenvalues; cf. \cite[Remark 2.9]{ABES23}.
\end{rem}

\begin{rem}[Cutoff of the number variance]
Observe from \eqref{number variance_total bulk} and \eqref{number variance_total outside} that the number variance of the total eigenvalue counting function exhibits a sharp cutoff once the disc extends beyond the support of the circular law. This transition is encoded by the scaling function $\mathcal{F}$ in \eqref{def of f number variance}, which describes the edge crossover.

Our asymptotic formulas for the real and non-real eigenvalue counting
functions further reveal that this cutoff is entirely driven by the
variance of the non-real eigenvalues; see Figure~\ref{fig: Numerics} (B). Indeed, while the contribution of
the real eigenvalues changes smoothly across the edge, the non-real
eigenvalue variance undergoes the crossover described by $\mathcal{F}$. This reflects the fact that the boundary of the circular-law droplet influences the non-real eigenvalues at the macroscopic scale.  
\end{rem}

Next, we turn to our main result on the asymptotic behaviour of the joint cumulants.

To state the result, we first recall the definition of joint cumulants. Given two random variables $X$ and $Y$, we denote by $\kappa_{j,k}(X,Y)$ their $(j,k)$-th joint cumulant, defined by
\begin{equation}\label{def of joint cumulant in general}
\kappa_{j,k}(X,Y)
:=
\left.
\frac{\partial^{\,j+k}}
{\partial t^j\,\partial u^k}
\log
\mathbb{E}
\bigl[
e^{\,tX+uY}
\bigr]
\right|_{t=u=0},
\qquad j,k\ge0.
\end{equation}
In particular, when $j+k=2$, the joint cumulants recover the variance and covariance via
\begin{equation} \label{joint cumulants special variances}
\kappa_{2,0}(X,Y)=\Var(X), \qquad
\kappa_{0,2}(X,Y)=\Var(Y), \qquad
\kappa_{1,1}(X,Y)=\Cov(X,Y).
\end{equation}  
We also write $\kappa_p(X)$ for the $p$-th cumulant of a random variable $X$. 

In order to state the asymptotic formula for the joint cumulants, we introduce the following quantities. For a positive integer $q$ and $s\in\mathbb{R}\cup\{+\infty\}$, define
\begin{equation} \label{def of varkappa real}
 \varkappa_q^{ \rm r } := 2^{q-1} \sum_{m = 1}^q \frac{(-1)^{m+1} m!}{m^{3/2}} S(q,m),
\end{equation} 
where $S(q,m)$ denotes the Stirling number of the second kind (see e.g. \cite[Section 26.8]{NIST})
\begin{equation}
S(q,m)
=
\frac1{m!}
\sum_{k=0}^{m}
(-1)^{m-k}
\binom{m}{k}
k^q. 
\end{equation}
In particular, the first few values of $\varkappa_q^{\rm r}$ are given by 
\begin{equation} \label{varkappa special real}
\varkappa_1^{\rm r}=1,\qquad
\varkappa_2^{\rm r}=2-\sqrt2,\qquad
\varkappa_3^{\rm r}
=
4-6\sqrt2+\frac{8 \sqrt{3} }{3}.
\end{equation}
For $q \ge 2$, we also define 
\begin{equation} \label{def of varkappa total}
  \varkappa_q(s)  :=(-2)^{q-1} \sqrt{\pi} \int_{-\infty}^s \Li_{1-q} \bigg(-\frac{\erfc(-x)}{\erfc(x)} \bigg) \, dx, 
\end{equation}
where $\Li_\nu(z)$ is the polylogarithm function (see e.g. \cite[Section 25.12]{NIST}) 
\begin{equation} \label{def of polylog}
\Li_\nu(z)
=
\sum_{k=1}^{\infty}
\frac{z^k}{k^\nu},
\qquad |z|<1,
\end{equation}
defined elsewhere by analytic continuation. Here, $\erfc(x)$  is the complementary error function \cite[Chapter~7]{NIST}.
Notice here that by definition \eqref{def of polylog} we have $\Li_{-1}(z)=z/(1-z)^2.$ 
Using this, one can observe that 
\begin{equation} \label{varkappa special total}
\varkappa_2(s)
=
\sqrt2\,\mathcal{F}(s), 
\end{equation}
where $\mathcal{F}$ is given by \eqref{def of f number variance}. We also remark that for odd integers $q\ge3$, due to the symmetry, $ \varkappa_q(\infty)=0$; cf. \cite[Eq.~(2.86)]{ABES23}.
 
Our main result provides asymptotic formulas for the joint cumulants of arbitrary orders of the numbers of real and non-real eigenvalues, thereby generalising Proposition~\ref{prop: number variance}. 

\begin{thm}[\textbf{Asymptotics of the joint cumulants}] \label{thm: cumulants}
    Let $N$ be a positive even integer and let $R\equiv R_N$ be a positive real number.
    For fixed nonnegative integers $j$ and $k$ satisfying $j+k\ge2$, let $\varkappa_k^{\rm r}$ and $\varkappa_k$ be as in \eqref{def of varkappa real} and \eqref{def of varkappa total}. Then the following asymptotic formulas hold.
    \begin{enumerate}[label=\textup{(\roman*)}]
        \item \textup{\textbf{(Bulk)}} Suppose that $R\in(0,1)$ is fixed. Then 
        \begin{align}
            \lim_{N\to\infty} \frac{\kappa_{j,k}(\cN_{N}^{ \rm r }(R), \cN_N^{ \rm c }(R))}{R \sqrt{2N/\pi}} = \begin{dcases}
                (-1)^k \varkappa_{j+k}^{ \rm r } , &  \textup{if }  j\neq 0,
                \smallskip 
                \\
                (-1)^k \varkappa_k^{ \rm r }  + \varkappa_k(\infty),  &  \textup{if } j = 0. 
            \end{dcases}  \label{joint cumulants bulk main thm}
        \end{align} 
         In particular, for every integer $p\ge2$,
        \begin{equation} \label{cumulants_total bulk}
        \lim_{N\to\infty}
        \frac{\kappa_p\bigl(\mathcal N_N(R)\bigr)}
        {R\sqrt{2N/\pi}}
        = \varkappa_p(\infty), 
        \end{equation}
        where $\varkappa_p(\infty)=0$ whenever $p$ is odd. 
        
        \smallskip 
        
        \item \textup{\textbf{(Edge)}}  Suppose that $R = 1- s/\sqrt{2N}$ for some $s\in\R$. Then  
        \begin{equation}    \label{joint cumulants edge main thm}
            \lim_{N\to\infty} \frac{ \kappa_{j,k}(\cN_{N}^{ \rm r }(R), \cN_N^{ \rm c }(R))  }{\sqrt{2N/\pi}} = \begin{dcases}
                (-1)^k \varkappa_{j+k}^{ \rm r}, &  \textup{if } j\neq 0,
              \smallskip 
              \\
                (-1)^k \varkappa_k^{ \rm r} + \varkappa_k(s),  &  \textup{if } j = 0.
            \end{dcases}
        \end{equation}
        In particular, for every integer $p\ge2$,
        \begin{equation}  \label{cumulants_total edge}
        \lim_{N\to\infty} 
        \frac{\kappa_p\bigl(\mathcal N_N(R)\bigr)}
        {\sqrt{2N/\pi}}
        =
        \varkappa_p(s).
        \end{equation}
        
        \smallskip 
        
        \item \textup{\textbf{(Outside)}} Suppose that $R \in (1,\infty]$ is fixed. Then 
        \begin{equation}  \label{joint cumulants outside main thm}  
            \lim_{N\to\infty} \frac{\kappa_{j,k}(\cN_{N}^{ \rm r }(R), \cN_{N}^{ \rm c }(R) )}{\sqrt{2N/\pi}} = (-1)^k \varkappa_{j+k}^{ \rm r }.
        \end{equation}
        In particular, for every integer $p\ge2$,
        \begin{equation} \label{cumulants_total outside}
        \lim_{N\to\infty} 
        \frac{\kappa_p\bigl(\mathcal N_N(R)\bigr)}
        {\sqrt{2N/\pi}}
        = 0.
        \end{equation}
    \end{enumerate} 
\end{thm}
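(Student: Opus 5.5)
The plan is to work with the joint moment generating function
\[
M_N(t,u):=\mathbb E\bigl[e^{t\cN_N^{\rm r}(R)+u\cN_N^{\rm c}(R)}\bigr],
\]
and to use the observation that the claimed limits have the structure
\[
\log M_N(t,u)\approx \Phi_N(u)+\Psi_N(t-u),
\]
with the second-order behaviour of both pieces of size $\sqrt{2N/\pi}$. Indeed, if $\Psi_N(v)\sim R\sqrt{2N/\pi}\sum_{q}\varkappa_q^{\rm r}v^q/q!$ (with $R$ replaced by $\min\{R,1\}$ off the bulk), then $\partial_t^j\partial_u^k\Psi_N(t-u)=(-1)^k\Psi_N^{(j+k)}$. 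This reproduces the $j\neq0$ entries of Theorem~\ref{thm: cumulants}, while $\Phi_N$ only enters when $j=0$ and should produce $\varkappa_k(s)$ (resp.\ $\varkappa_k(\infty)$, resp.\ $0$). Writing $t\cN^{\rm r}+u\cN^{\rm c}=u\cN_N(R)+(t-u)\cN^{\rm r}_N(R)$, I would therefore aim to identify
\[
\Phi_N(u)=\log\mathbb E\bigl[e^{u\cN_N(R)}\bigr]
\]
as the generating function of the total count, and $\Psi_N$ as the generating function of the real count under the $u$-tilted measure. The key claim is that, to leading order in $\sqrt N$, this real count does not depend on $u$.

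\textbf{Step 1 (Pfaffian/determinant formula).} Using the GinOE Pfaffian structure with even $N$ and the skew-orthogonal (monomial-type) polynomials, write
\[
M_N(t,u)=\operatorname{Pf}\bigl[\langle x^{i},x^{l}\rangle_{t,u}\bigr]\big/\operatorname{Pf}\bigl[\langle x^{i},x^{l}\rangle_{0,0}\bigr],
\]
with a skew product split into real and complex parts. The real weight is $e^{t\mathbbm 1_{|x|<R}}$. The complex weight is $e^{2u\mathbbm 1_{|z|<R}}$ per conjugate pair, which is radial. Because of this radial symmetry, the complex part of the skew moment matrix keeps its skew-tridiagonal (block-diagonal) structure. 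It contributes incomplete-gamma factors of the type $1+(e^{2u}-1)\,\gamma(2k,NR^2)/\Gamma(2k)$, exactly as in the GinSE analysis.

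\textbf{Step 2 (factorisation).} Factor out the block-diagonal complex part and reduce the real-axis part to $\det(I+K_{t,u})$ for an explicit operator on $L^2(-R,R)$. Its kernel is built from the real GinOE kernel, reweighted by the $u$-tilt, and carries the factor $(e^{t-u}-1)$ after regrouping. Expanding
\[
\log\det(I+K)=\sum_m\frac{(-1)^{m+1}}{m}\Tr K^m,
\]
each trace is a real-axis integral. With the bulk real kernel $\sqrt{N/2\pi}\,e^{-N(x-y)^2/2}$ (a Gaussian of width $N^{-1/2}$), $\Tr K^m$ is computed by iterated Gaussian convolutions. This gives $\sqrt{2N/\pi}\,\min\{R,1\}\,m^{-3/2}\cdot(\text{const})$, and re-expanding powers of $(e^{t-u}-1)$ in $v=t-u$ produces the Stirling numbers in $\varkappa_q^{\rm r}$. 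The check is $q=2$, which must give $2-\sqrt2$, in agreement with \eqref{number variance real evs_FS}.

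\textbf{Step 3 (radial sum).} For the incomplete-gamma product, use the uniform asymptotics $\gamma(2k,NR^2)/\Gamma(2k)\approx\tfrac12\erfc\bigl(\sqrt N\,\tfrac{R^2-2k/N}{\sqrt2 R}\cdots\bigr)$ and convert the sum over $k$ into an integral, as in Charlier's GinUE analysis. Taking $\partial_u^k$ of $\log(1+(e^{u}-1)\cdot\tfrac12\erfc)$ yields the polylogarithm $\Li_{1-k}(-\erfc(-x)/\erfc(x))$, which gives $\varkappa_k(s)$ at the edge, $\varkappa_k(\infty)$ (with factor $R$) in the bulk, and nothing outside.

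\textbf{Main obstacle.} The hardest step is Step 2. I must show that the real-axis correction really depends on $(t,u)$ only through $t-u$ at order $\sqrt N$. The $u$-reweighting changes the kernel near the real axis, since it acts on the complex eigenvalues adjacent to the axis. I must also show that the resulting error terms are $o(\sqrt N)$ uniformly for $t,u$ in a small complex neighbourhood of $0$. This uniformity is what lets the cumulants be extracted by Cauchy estimates. I would try to prove that the $u$-dependence enters only through the factor $e^{t-u}$ (from rewriting $e^{t}$ real weights relative to the total tilt $e^{u}$), plus terms supported near $|x|=R$ that contribute $O(1)$. The edge regime additionally requires matching the real kernel's edge profile, which should give only a $\min\{R,1\}=1+o(1)$ change at leading order.
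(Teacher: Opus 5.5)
Your target structure $\log \E[e^{t\cN_N^{\rm r}+u\cN_N^{\rm c}}]\approx\Phi_N(u)+\Psi_N(t-u)$ is exactly what the paper's proof produces, and your Step~3 matches its computation of $g(u)$. But there is a genuine gap: you never supply the mechanism that gives the $t-u$ dependence, and Step~1 is false as stated. The complex part of the GinOE skew product carries the weight $e^{-N(z^2+\bar z^2)/2}\erfc(\sqrt{2N}|\im z|)$, which is not rotation invariant. Inserting the radial factor $e^{2u\mathbbm{1}_{\D_R}}$ therefore does not give a block-diagonal, GinSE-like matrix.

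The missing idea is an integration by parts on the upper half-disc (Green's theorem), using $e^{-Nx^2/2}q_{2k-1}(x)=-N^{-1}\frac{d}{dx}[e^{-Nx^2/2}q_{2k-2}(x)]$ and $\partial_{\bar z}\erfc(\sqrt{2N}\,\im z)\propto e^{-2N(\im z)^2}$. The area term becomes radial and gives the diagonal $\bfL_N(R)$. The boundary term on $[-R,R]$ gives exactly $-\bfM_N(R)$, the same matrix the real--real part produces. The arc gives $-\bfE_{N,2}(R)$. This yields the exact identity
\[
\E\bigl[e^{t\cN_N^{\rm r}(R)+u\cN_N^{\rm c}(R)}\bigr]=\det\bigl[\mathrm{Id}_n+(e^{2u}-1)\bfL_N+(e^{2t}-e^{2u})\bfM_N-(e^{2t}-e^{t})\bfE_{N,1}-(e^{2u}-1)\bfE_{N,2}\bigr].
\]
It is the cancellation in the coefficient $e^{2t}-e^{2u}$ of $\bfM_N$, not a property of a reweighted real kernel, that makes $u$ enter the real-axis part only through $t-u$. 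Your radial picture is correct only for the total count $t=u$, where the $\bfM_N$ terms cancel exactly. Without this identity, your ``main obstacle'' is essentially the theorem itself.

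Even granting the identity, two further ingredients are needed.
\begin{enumerate}
\item Every trace of a word in $\bfM_N$ and $\bfL_N$ with $\ell\ge1$ factors of $\bfM_N$ must equal $\Tr[\bfM_N^\ell]+o(\sqrt n)$. The paper proves this by a sandwich. The upper bound uses nonnegativity of the entries and $0\le[\bfL_N]_{jj}\le1$. The lower bound uses $\bfL_N\approx\mathrm{Id}$ and $\bfM_N\approx\bfM^{(\infty)}_{K_n^-}$ on indices up to $K_n^-$, together with the trace asymptotics of \cite{KPTTZ15}. The identity $\sum_{m\ge\ell}\frac{(-1)^{m-1}}{m}\binom{m}{\ell}x^{m-\ell}=\frac{(-1)^{\ell-1}}{\ell}(1+x)^{-\ell}$ then turns $(e^{2t}-e^{2u})^\ell$ into $(e^{2(t-u)}-1)^\ell$.
\item A localisation argument must show that words containing $\bfE_{N,1}$ or $\bfE_{N,2}$ are $o(\sqrt n)$.
\end{enumerate}

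The correct factor is also $e^{2(t-u)}-1$, not $e^{t-u}-1$. With your Gaussian kernel $\sqrt{N/2\pi}\,e^{-N(x-y)^2/2}$ and factor $e^{v}-1$, you would get $-R\sqrt{2N/\pi}\Li_{3/2}(1-e^{v})$. Its second cumulant is $(1-1/\sqrt2)R\sqrt{2N/\pi}$, half the true value. The real eigenvalues form a Pfaffian process, not a determinantal one. The right scalar object is $\bfM_N$, with $\Tr\bfM_N^m\sim(r\wedge1)\sqrt{n/(\pi m)}$, entering through $e^{2v}-1$; this is the source of the $2^{q-1}$ in $\varkappa^{\rm r}_q$.

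Finally, you do not need Cauchy estimates uniform in complex $(t,u)$. All scalar coefficients vanish at the origin, so $[t^ju^k]$ of $\Tr\log$ involves only $m\le j+k$, i.e.\ finitely many trace words at each fixed $N$.
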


By \eqref{joint cumulants special variances},
\eqref{varkappa special real}, and
\eqref{varkappa special total}, the case $j+k=2$ of
Theorem~\ref{thm: cumulants} immediately recovers
Proposition~\ref{prop: number variance}. 
Nevertheless, we state Proposition~\ref{prop: number variance} separately, as the second-order case deserves special attention for being more accessible from an intuitive point of view.

\begin{rem}[Universality across symmetry classes] \label{Rem_universality}
A remarkable consequence of Theorem~\ref{thm: cumulants} is a universality phenomenon for the asymptotic cumulants of the total eigenvalue counting function; see
\eqref{cumulants_total bulk},
\eqref{cumulants_total edge}, and
\eqref{cumulants_total outside}.
Indeed, under the appropriate symmetry-dependent normalisations, the same limiting cumulants $\varkappa_p$ arise in the
corresponding counting statistics for the GinUE and GinSE; see
Eq.~(10) of the arXiv version of \cite{LMS19} for the GinUE and
\cite[Theorem~2.14]{ABES23} for the GinSE.

What makes this universality particularly interesting is that it is invisible at the level of the individual species.
The cumulants of the real and non-real eigenvalue counting functions
depend strongly on the underlying symmetry class and differ
substantially from those of the GinUE and GinSE.
On the other hand, when the two counting functions are combined, the
symmetry-dependent contributions cancel identically in every cumulant,
leaving precisely the same limiting cumulants as in the GinUE and GinSE. 

Such universality across different symmetry classes is not a property of
either species individually, but rather emerges only through their joint
fluctuations. At present, we do not have a conceptual explanation for
this remarkable cancellation mechanism.
\end{rem}

Another notable feature of Theorem~\ref{thm: cumulants} is that every
joint cumulant of order at least two is of order $\sqrt{N}$, regardless
of its order. More precisely, for every fixed pair $(j,k)$ with
$j+k\ge2$, we have 
\begin{equation} 
\kappa_{j,k}( \mathcal{N}_N^{\rm r}(R), \mathcal{N}_N^{\rm c}(R) )
=
O(\sqrt N).
\end{equation} 
This phenomenon has also been observed for the GinUE and GinSE, and may
be viewed as a higher-order analogue of hyperuniformity. Indeed,
hyperuniformity concerns only the growth of the second-order cumulant,
namely the number variance, whereas Theorem~\ref{thm: cumulants}
controls the entire hierarchy of higher-order cumulants. 

As an immediate consequence, since the higher-order cumulants become
negligible after the natural $N^{1/4}$ normalisation, we obtain the
following bivariate central limit theorem.

\begin{cor}[\textbf{Bivariate central limit theorem}] \label{cor: CLT}
 Under the assumptions of Theorem~\ref{thm: cumulants}, as $N\to\infty$,
\begin{equation}
\frac{1}{(2N/\pi)^{1/4}}
\left(
\begin{pmatrix}
\cN_N^{\rm r}(R)
\smallskip 
\\
\cN_N^{\rm c}(R)
\end{pmatrix}
-
\E
\begin{pmatrix}
\cN_N^{\rm r}(R)
\smallskip 
\\ 
\cN_N^{\rm c}(R)
\end{pmatrix}
\right) \to \textup{N}(\mathbf 0,\mathbf\Sigma_R),
\end{equation}
where the covariance matrix $\mathbf\Sigma_R$ is given according to the
corresponding scaling regime as follows.
    \begin{enumerate}[label=\textup{(\roman*)}]
        \item \textup{\textbf{(Bulk)}} If $R\in(0,1)$ is fixed, then
        \begin{equation}
            \boldsymbol{\Sigma}_R = \begin{pmatrix}
                (2-\sqrt{2})R & -(2-\sqrt{2})R
                \smallskip 
                \\
                -(2-\sqrt{2})R & 2R
            \end{pmatrix}.
        \end{equation}

        \item \textup{\textbf{(Edge)}} If $R=1-s/\sqrt{2N}$ for some $s\in\R$, then
        \begin{equation}
            \boldsymbol{\Sigma}_R = \begin{pmatrix}
                2-\sqrt{2} & -(2-\sqrt{2})
                \smallskip 
                \\
                -(2-\sqrt{2}) & 2-\sqrt{2}+ \sqrt{2}\,\mathcal{F}(s)
            \end{pmatrix},
        \end{equation}
        where $\mathcal{F}$ is given by \eqref{def of f number variance}.  
        \smallskip 
        \item \textup{\textbf{(Outside)}} If $R\in(1,\infty]$ is fixed, then
        \begin{equation}
            \boldsymbol{\Sigma}_R = \begin{pmatrix}
                2-\sqrt{2} & -(2-\sqrt{2})
                \smallskip 
                \\
                -(2-\sqrt{2}) & 2-\sqrt{2}
            \end{pmatrix}.
        \end{equation}
    \end{enumerate}
\end{cor}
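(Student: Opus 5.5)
The plan is to deduce the corollary from Theorem~\ref{thm: cumulants} by the method of cumulants, applied to the rescaled vector
\[
(X_N,Y_N):=(2N/\pi)^{-1/4}\bigl(\cN_N^{\rm r}(R)-\E\cN_N^{\rm r}(R),\ \cN_N^{\rm c}(R)-\E\cN_N^{\rm c}(R)\bigr).
\]
Joint cumulants are invariant under deterministic shifts when $j+k\ge2$ and are homogeneous of degree $j$ in the first variable and $k$ in the second. Hence
\[
\kappa_{j,k}(X_N,Y_N)=(2N/\pi)^{-(j+k)/4}\,\kappa_{j,k}\bigl(\cN_N^{\rm r}(R),\cN_N^{\rm c}(R)\bigr).
\]
The first-order cumulants of $(X_N,Y_N)$ vanish identically by centring.

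In each regime, Theorem~\ref{thm: cumulants} shows that $\kappa_{j,k}(\cN_N^{\rm r},\cN_N^{\rm c})=O(\sqrt N)$ for every fixed $(j,k)$ with $j+k\ge2$. In the bulk the extra factor $R$ is harmless since $R$ is fixed. Therefore $\kappa_{j,k}(X_N,Y_N)=O(N^{(2-j-k)/4})\to0$ whenever $j+k\ge3$.

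For $j+k=2$, the cumulants converge to the limits computed in the theorem. These are read off from \eqref{joint cumulants bulk main thm}, \eqref{joint cumulants edge main thm} and \eqref{joint cumulants outside main thm} using \eqref{joint cumulants special variances}, \eqref{varkappa special real} and \eqref{varkappa special total}:
\begin{itemize}
\item $\kappa_{2,0}\to\varkappa_2^{\rm r}=2-\sqrt2$ (times $R$ in the bulk);
\item $\kappa_{1,1}\to-\varkappa_2^{\rm r}$;
\item $\kappa_{0,2}\to\varkappa_2^{\rm r}+\varkappa_2(\cdot)$.
\end{itemize}
It remains to identify $\varkappa_2(s)=\sqrt2\mathcal F(s)$, to note $\varkappa_2(\infty)=\sqrt2\mathcal F(\infty)$, and to check that $\sqrt2\mathcal F(\infty)=2R/R-(2-\sqrt2)$ in the bulk, i.e.\ $\sqrt2\,\mathcal F(\infty)=\sqrt2$, so that $\mathcal F(\infty)=1$. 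The last equality follows from $\sqrt{2\pi}\int_{\R}\erfc(t)\erfc(-t)\,dt/4=1$, consistent with Proposition~\ref{prop: number variance}. These limits are exactly the entries of $\mathbf\Sigma_R$ in the three cases, including $R=\infty$ in the outside regime.

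Since every joint cumulant of $(X_N,Y_N)$ converges to the corresponding joint cumulant of $\textup{N}(\mathbf0,\mathbf\Sigma_R)$, which has only second-order cumulants, all mixed moments converge as well. Indeed, moments are finite polynomials in cumulants by the moment–cumulant formula. The Gaussian law is determined by its moments, so the method of moments gives convergence in distribution. Alternatively one can apply Cramér–Wold: for fixed $(a,b)$, the cumulants of $aX_N+bY_N$ are multilinear combinations $\kappa_p(aX_N+bY_N)=\sum_{j+k=p}\binom pj a^jb^k\kappa_{j,k}$, which tend to $0$ for $p\ge3$ and to $(a,b)\mathbf\Sigma_R(a,b)^T$ for $p=2$. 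The univariate cumulant criterion then applies.

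There is no genuine obstacle here. The only care needed is bookkeeping: verifying the shift/scaling behaviour of joint cumulants, and that the theorem's limits, uniform in the regime parameters as stated, produce precisely the displayed matrices. A possibly degenerate $\mathbf\Sigma_R$, for example in the outside regime where it is singular, is allowed, since a degenerate Gaussian is still moment-determined.
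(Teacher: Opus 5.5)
Your proposal is correct and follows the paper's own argument. The paper deduces the corollary directly from Theorem~\ref{thm: cumulants}: every joint cumulant of order at least two is $O(\sqrt N)$, so after the $(2N/\pi)^{1/4}$ normalisation the cumulants of order $\geq 3$ vanish, while the second-order ones converge to the entries of $\mathbf\Sigma_R$. The one extra ingredient you invoke, $\mathcal F(\infty)=1$, is indeed true: it is equivalent to $\int_{\R}\erfc(t)\erfc(-t)\,dt=2\sqrt{2}/\sqrt{\pi}$, which follows from $\int_0^\infty\erfc(t)^2\,dt=(2-\sqrt{2})/\sqrt{\pi}$.
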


As mentioned earlier, several works
\cite{FS23a,Si17,Fo24,GLX24,BMS25} have established Gaussian central limit
theorems for either the real eigenvalue counting function or the
non-real eigenvalue counting function of the GinOE. By contrast,
Corollary~\ref{cor: CLT} is, to the best of our knowledge, the first
result establishing a bivariate central limit theorem for their joint
fluctuations.

\begin{rem}[General domains beyond discs]
We conclude this section by commenting on the extension of our results to
more general counting domains. As mentioned above, for the GinUE, and also
for the purely non-real eigenvalue statistics of the GinOE, the number
variance can be analysed for fairly general domains. By contrast, the
precise asymptotic behaviour of higher-order cumulants for general counting
domains remains open even for the GinUE. Nevertheless, the corresponding central limit theorem for the GinUE follows from the number variance asymptotics and the general theory of determinantal point processes \cite{Sos02}. 

In the present work, our derivation of the precise asymptotics of the joint
cumulants relies heavily on the fact that the counting domain is a centred
disc. This is closely related to the determinantal structure that will be
introduced in the following section. In the disc setting, the relevant
matrix entries admit explicit representations in terms of incomplete gamma
functions, which allows us to exploit their uniform asymptotic expansions
and thereby obtain precise estimates throughout the different scaling
regimes. Although a finite-$N$ representation analogous to
Proposition~\ref{Prop_finite N formula} can still be formulated for a
general counting domain, the resulting matrix entries no longer possess
such an explicit structure, making a precise asymptotic analysis
considerably more challenging.

For the number variance, however, an alternative route may be available.
Following an approach similar to that used in \cite{MMO26}, one could work
directly with the one- and two-point correlation functions rather than with
the generating function. In the GinOE setting, this approach presents
additional difficulties compared with the GinUE. In particular, one has
to keep track separately of the real--real, complex--complex, and
real--complex contributions, the last of which encodes the covariance
between the real and non-real eigenvalue counts. Moreover, the transition
between the two-dimensional non-real spectrum and the one-dimensional real
spectrum requires a careful asymptotic analysis in a neighbourhood of the
real axis. Establishing the number-variance asymptotics for general
counting domains, especially those intersecting the real axis, therefore
appears to be an interesting direction for future study.
\end{rem}

\subsection*{Strategy of the proof.}
The starting point for the proof of Theorem~\ref{thm: cumulants} 
is the finite-$N$ determinantal representation of the joint
generating function established in
Proposition~\ref{Prop_finite N formula}.
This reduces the problem to the asymptotic analysis of trace products
involving the structured matrices
$\bfL_N(R)$, $\bfM_N(R)$, $\bfE_{N,1}(R)$, and $\bfE_{N,2}(R)$ defined in \eqref{eq: def of bfL and bfM} and \eqref{eq: def of bfE1 and bfE2}.
The first two matrices determine the leading contributions, while
the latter two are negligible at the relevant scale.
The resulting trace asymptotics, combined with the trace-log
expansion, yield the joint cumulant asymptotics.
The overall strategy is summarised in Figure~\ref{Fig_proof strategy}.

\subsection*{Organisation of the paper.}
In  Section~\ref{sec: exact formulas}, we derive the finite-$N$ representation that provides
the algebraic foundation for the subsequent analysis.
In Section~\ref{sec: asymptotic analysis}, we establish the asymptotic estimates for the
structured trace products.
Finally, in Section~\ref{sec: proof main rslt}, we combine these estimates to complete the proof of Theorem~\ref{thm: cumulants}.

\subsection*{Acknowledgements} The authors were supported by the National Research Foundation of Korea grant (RS-2025-00516909, RS-2026-25518141). We thank Gernot Akemann, Giorgio Cipolloni, Markus Ebke, Peter J. Forrester, Patrick Lopatto, and Gregory Schehr for their interest and helpful discussions.

\subsection*{AI use disclosure} We used ChatGPT 5.6 Pro for language polishing at the final stage of manuscript preparation and for checking the proofs. All mathematical arguments and proofs were independently developed and written by the authors.

\begin{figure}[t]
\centering
\begin{tikzpicture}[
  box/.style={
    draw,
    rounded corners=2pt,
    minimum height=1.45cm,
    text width=3.35cm,
    align=center,
    inner sep=3pt,
    line width=0.65pt
  },
  boxFinite/.style={
    box,
    fill=blue!7,
    draw=blue!45!black
  },
  boxTrace/.style={
    box,
    fill=orange!9,
    draw=orange!55!black
  },
  boxMain/.style={
    box,
    fill=green!8,
    draw=green!45!black
  },
  boxError/.style={
    box,
    fill=gray!8,
    draw=black!45
  },
  boxFinal/.style={
    box,
    fill=red!8,
    draw=red!45!black
  },
  arr/.style={
    -Latex,
    line width=0.8pt,
    draw=black!75
  },
  every node/.style={
    font=\small
  }
]

\node[boxFinite] (finite)
{Finite-$N$ \\
representation\\
(Prop.~\ref{Prop_finite N formula})};

\node[boxTrace, right=0.55cm of finite] (traces)
{Structured trace\\
asymptotics \\ 
(Eqs. \eqref{eq for joint cumulants}, \eqref{eq: trace log expansion})};

\node[
  boxMain,
  right=0.65cm of traces,
  yshift=1.05cm
] (main)
{$\bfL_N(R),\,\bfM_N(R)$\\
leading terms
\\
(Lem.~\ref{lem: trace power bfL-n-R asymp}, ~\ref{lem: trace power bfM-n-R asymp}, ~\ref{lem: trace power mixed products only bfM bfL})};

\node[
  boxError,
  right=0.65cm of traces,
  yshift=-1.05cm
] (errors)
{$\bfE_{N,1}(R),\,\bfE_{N,2}(R)$\\
error terms
\\
(Lem.~\ref{lem: trace power mixed products including bfE})};

\path
    (main.east) -- (errors.east)
    coordinate[midway] (branchEast);

\node[
  boxFinal,
  right=0.65cm of branchEast
] (cumulants)
{Joint cumulant\\
asymptotics\\
(Thm.~\ref{thm: cumulants}) };

\draw[arr]
    (finite.east) -- (traces.west);

\draw[arr]
    (traces.east)
    to[out=20, in=180]
    (main.west);

\draw[arr]
    (traces.east)
    to[out=-20, in=180]
    (errors.west);

\draw[arr]
    (main.east)
    to[out=0, in=160]
    ([yshift=0.22cm]cumulants.west);

\draw[arr]
    (errors.east)
    to[out=0, in=200]
    ([yshift=-0.22cm]cumulants.west);

\end{tikzpicture}
\caption{
    Schematic diagram of the proof strategy for
    Theorem~\ref{thm: cumulants}.
}
\label{Fig_proof strategy}
\end{figure}

\section{Exact structured formulas for disc counting statistics} \label{sec: exact formulas}

The main purpose of this section is to derive an explicit formula (Proposition~\ref{Prop_finite N formula}) for
the joint cumulants that is amenable to asymptotic analysis. More
precisely, starting from the determinantal representation of the
generating function, we express the joint cumulants in terms of certain
structured matrices that will be analysed in the subsequent section. 

For this purpose, we recall that the lower incomplete gamma function
\cite[Chapter~8]{NIST} is defined by
\begin{equation}
    \gamma(a,z)
    :=
    \int_0^z t^{a-1}e^{-t}\,dt,
    \qquad a>0,\quad z\geq0.
\end{equation} 
We next introduce the following structured matrices, which will serve
as the main objects in our subsequent asymptotic analysis. 

\begin{defn}{Structured matrices}
Let $\bfL_N(R)$ and $\bfM_N(R)$ be the $n\times n$ matrices with
entries
\begin{equation}
\begin{split}
[\bfL_N(R)]_{j,k} &:= \frac{\gamma(2j-1,NR^2)}{\Gamma(2j-1)} \delta_{j,k},
\\
[\bfM_N(R)]_{j,k} &:= \frac{ \gamma(j+k-\tfrac{3}{2},N R^2)}{\sqrt{2\pi\Gamma(2j-1)\Gamma(2k-1)}}, \label{eq: def of bfL and bfM}
\end{split}
\end{equation}
where $\delta_{j,k}$ denotes the Kronecker delta.

We further define the $n\times n$ matrices $\bfE_{N,1}(R)$ and
$\bfE_{N,2}(R)$ by
 \begin{equation}
 \begin{split} 
        [\bfE_{N,1}(R)]_{j,k} &:= \frac{ 2^{j-\frac{1}{2}} N^{k-1} R^{2k-2} e^{-\frac{NR^2}{2}} \gamma(j-\tfrac{1}{2}, \tfrac{NR^2}{2})}{\sqrt{2\pi\Gamma(2j-1)\Gamma(2k-1)}},
        \\ 
        [\bfE_{N,2}(R)]_{j,k} &:= \frac{ E_N(j,k;R) }{\sqrt{2\pi\Gamma(2j-1)\Gamma(2k-1)}}.  \label{eq: def of bfE1 and bfE2} 
 \end{split}
    \end{equation}
Here, $\delta_{j,k}$ denotes the Kronecker delta, and
    \begin{equation}
        E_N(j,k;R):= -(NR^2)^{j+k-\frac{3}{2}} \int_0^\pi   \sin((2j-2k +1)\theta) e^{-NR^2 \cos 2\theta} \erfc(\sqrt{2N}R \sin\theta)\,d\theta .
    \end{equation} 
\end{defn}

We use the term ``structured matrices'' here in a broad sense:
the matrices above are not Toeplitz, Hankel, or of another classical
structured form, but their entries possess explicit algebraic and
special-function structures that will play a crucial role in the
subsequent analysis. Closely related matrix structures arise naturally
in the study of asymmetric random matrices; see e.g. 
\cite{AK07,KPTTZ15,BMS25,KA05}.

We now state the main finite-$N$ formula of this section, which provides an explicit representation of the joint cumulants of the numbers of real and non-real eigenvalues. 

\begin{prop}[\textbf{Finite-$N$ representation of joint cumulants}] \label{Prop_finite N formula}
Let $N=2n$ be a positive even integer and let $R\in[0,\infty]$. For $t,u \in \mathbb{R}$, define 
\begin{align}
  \bfA_N(t;R)
    &:=
    (e^{2t}-1)\bfM_N(R)
    -
    (e^{2t}-e^t)\bfE_{N,1}(R),  \label{def of AN t R in terms of M E}
   \\
      \bfB_N(u;R)
    &:=
    (e^{2u}-1)
    \Big(
        \bfL_N(R)
        -
        \bfM_N(R)
        -
        \bfE_{N,2}(R)
    \Big). \label{def of BN t R in terms of L E}
\end{align}  
Then we have
    \begin{align} \label{eq for generating function}
        \E\Big[e^{t \cN_N^{\rm r}(R) + u \cN_N^{\rm c}(R)}\Big] = \det \Big[ \mathrm{Id}_n + \bfA_N(t;R) + \bfB_N(u;R) \Big].
    \end{align}
Consequently, for nonnegative integers $p$ and $q$ with $p+q\geq1$, we have 
    \begin{equation}
        \begin{split} \label{eq for joint cumulants}
    \kappa_{p,q}
        \bigl(
            \cN_N^{\rm r}(R),
            \cN_N^{\rm c}(R)
        \bigr)
    &=    p!\,q!\,[t^p u^q] \Tr\log \Big[ \mathrm{Id}_{n} + \bfA_N(t;R) + \bfB_N(u;R) \Big]. 
        \end{split}
    \end{equation}
Here, $[t^pu^q]f(t,u)$ denotes the coefficient of $t^pu^q$
in the Taylor expansion of $f$ about $(t,u)=(0,0)$. 
\end{prop}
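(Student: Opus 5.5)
The plan is to start from the Pfaffian structure of the GinOE with $N=2n$ and use the de Bruijn integration formula. The joint distribution of eigenvalues, with $L$ real eigenvalues and $(N-L)/2$ conjugate pairs summed over $L$ even, turns into the standard Pfaffian of an antisymmetric skew-inner-product matrix. Introduce the weight $w(z)=e^{t\mathbbm{1}_{\{|z|<R\}}}$ on real points and $w(z)=e^{u\mathbbm{1}_{\{|z|<R\}}}$ on non-real points. Note that each conjugate pair inside the disc contributes two to $\cN_N^{\rm c}$, so the non-real weight enters as $e^{2u}$ for $z\in\C_+$. The generating function then becomes
\[
\E\bigl[e^{t\cN_N^{\rm r}+u\cN_N^{\rm c}}\bigr]=\frac{\operatorname{Pf}[\alpha^{t}_{j,k}+\beta^{u}_{j,k}]}{\operatorname{Pf}[\alpha^0_{j,k}+\beta^0_{j,k}]},
\]
with the usual real part $\alpha$ (a double integral over $\R^2$ with $\sgn(y-x)$) and complex part $\beta$ (an integral over $\C_+$ with $\erfc(\sqrt{2N}\,\im z)$), both taken against monic polynomials.

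The key choice is the family of skew-orthogonal polynomials for the GinOE: $p_{2j}(x)=x^{2j}$ and $p_{2j+1}(x)=x^{2j+1}-2j\,x^{2j-1}/N$, suitably scaled. For these, the unweighted skew-moment matrix is block diagonal with $2\times2$ blocks $\begin{pmatrix}0&r_j\\-r_j&0\end{pmatrix}$. The weighted matrix is then the identity plus perturbations, restricted to the disc, of the $\alpha$ and $\beta$ integrands. The perturbation of $\alpha$ carries $(e^{t\mathbbm 1}e^{t\mathbbm 1}-1)$, which splits as $(e^{2t}-1)$ times both points inside, plus $(e^t-1)$ times exactly one point inside; this is the origin of the combination $(e^{2t}-1)\bfM-(e^{2t}-e^t)\bfE_{N,1}$. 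The perturbation of $\beta$ carries $(e^{2u}-1)$ times the restricted integral over $\{|z|<R\}\cap\C_+$.

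The next step exploits rotational structure: for polynomial pairs of the same parity class, the Pfaffian block reduces to a determinant of an $n\times n$ matrix indexed by pairs. Concretely, $\operatorname{Pf}\begin{pmatrix}0&X\\-X^T&0\end{pmatrix}=\pm\det X$ after reordering odd and even indices, provided the even–even and odd–odd entries vanish. I expect this vanishing to follow from parity, since the disc is symmetric under $z\mapsto -z$. This gives the determinant form $\det[\mathrm{Id}_n+\cdots]$.

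Then I would compute the entries explicitly in polar coordinates. For the real part, integrating $x^{2j-2}$ and the odd polynomial against $e^{-Nx^2/2}$ on $(-R,R)$, together with the sign function, produces $\gamma(j+k-\tfrac32,NR^2)$ and hence $\bfM_N$. The boundary term, where one point is inside and one outside, produces the incomplete-gamma product defining $\bfE_{N,1}$. For the complex part, I would write $z=re^{i\theta}$ and integrate $\bar z^{\,\cdot}z^{\,\cdot}-\text{c.c.}$ against $e^{-N(r^2\cos2\theta)}$-type weights with the $\erfc$. After integrating by parts in $r$, the diagonal term yields $\gamma(2j-1,NR^2)/\Gamma(2j-1)$, i.e. $\bfL_N$. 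The remainder is exactly $-\bfM_N-\bfE_{N,2}$, because the total (real plus complex) skew moment over the full disc must reconstruct the GinUE-like diagonal. I would use this consistency (at $R=\infty$ the matrix entries must sum to the normalisation) as a check.

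The main obstacle is this last bookkeeping step: showing that the complex-part integral equals $\bfL_N-\bfM_N-\bfE_{N,2}$, with $\bfE_{N,2}$ exactly as in the definition. This requires a careful integration by parts in $r$ with the $\erfc(\sqrt{2N}r\sin\theta)$ factor and identification of the boundary term at $r=R$. Finally, \eqref{eq for joint cumulants} follows by taking $\log\det=\Tr\log$, noting that $\bfA_N(0;R)=\bfB_N(0;R)=0$ so that the expansion is analytic near the origin, and reading off Taylor coefficients via \eqref{def of joint cumulant in general}.
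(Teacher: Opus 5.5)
Your overall architecture is the same as the paper's. You pass from the Pfaffian to $\det[\mathrm{Id}_n+\cdots]$ through the skew-orthogonal basis $q_j$ and the checkerboard vanishing coming from $z\mapsto -z$ (the paper simply quotes \cite[Proposition~2.1]{Si17} for this). You then evaluate the entries explicitly and finish with $\log\det=\Tr\log$. The gap is in the complex part, the step you flag yourself, and the mechanism you propose for it does not work.

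Your justification that the remainder is ``exactly $-\bfM_N-\bfE_{N,2}$ because the total skew moment must reconstruct the diagonal'' is really a statement about $R=\infty$, where $\bfL_N=\mathrm{Id}_n$ and $\bfE_{N,1}=\bfE_{N,2}=0$. At finite $R$, the real moment over $[-R,R]^2$ plus the complex moment over $\D_R$ equals $\bfL_N-\bfE_{N,1}-\bfE_{N,2}$, which is not diagonal. In any case, a consistency relation of this kind cannot determine individual entries.

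Integrating by parts in $r$ alone also cannot generate the $-\bfM_N$ term. In polar coordinates the diameter $[-R,R]$ is $\theta\in\{0,\pi\}$, so an $r$-integration by parts only produces a boundary term at $r=R$. It also leaves bulk terms in which $\erfc(\sqrt{2N}\,r\sin\theta)$ multiplies a non-radial integrand, since $\partial_r$ also hits the holomorphic factor $e^{-Nz^2/2}q_{2j-2}(z)$. You have no way to evaluate these in closed form.

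The missing idea is a $\bar\partial$-integration by parts (Green's theorem) on the half-disc $\D_R\cap\HH$. Use $e^{-N\bar z^2/2}q_{2k-1}(\bar z)=-\frac1N\partial_{\bar z}\bigl[e^{-N\bar z^2/2}q_{2k-2}(\bar z)\bigr]$ and its $\partial_z$-analogue. Because $e^{-Nz^2/2}q_{2j-2}(z)$ is holomorphic (and its conjugate antiholomorphic), the derivative lands only on $\erfc(\sqrt{2N}\,\im z)$, producing $\mp i\sqrt{2N/\pi}\,e^{-2N(\im z)^2}$. Together with $e^{-N(z^2+\bar z^2)/2}$ this becomes the radial weight $e^{-N|z|^2}$, whose angular integral gives the diagonal $\sqrt{2\pi}\,\gamma(2j-1,NR^2)\delta_{j,k}$, i.e.\ $\bfL_N$. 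The boundary integral then splits into two pieces:
\begin{itemize}
\item On the diameter $[-R,R]$ one has $\erfc(0)=1$ and weight $e^{-Nx^2}$, giving $-\gamma(j+k-\tfrac32,NR^2)$, i.e.\ $-\bfM_N$.
\item On the arc, the identity $z^{2j-2}\bar z^{2k-2}\,dz+\bar z^{2j-2}z^{2k-2}\,d\bar z=-2R^{2j+2k-3}\sin((2j-2k+1)\theta)\,d\theta$ gives $-E_N(j,k;R)$, i.e.\ $-\bfE_{N,2}$.
\end{itemize}

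There is also a smaller bookkeeping issue in the real part. Read literally, your assignment (both points inside gives $\bfM_N$, exactly one inside gives $\bfE_{N,1}$) yields $(e^{2t}-1)\bfM_N+(e^t-1)\bfE_{N,1}$, which is wrong. In fact the both-inside integral is $\gamma(j+k-\tfrac32,NR^2)$ minus the boundary term at $y=\pm R$, i.e.\ $\bfM_N-\bfE_{N,1}$. Of the two mixed regions, the one where the odd polynomial's variable lies inside vanishes by oddness of $q_{2k-1}$, and the other equals $\bfE_{N,1}$. Only with this accounting does $(e^{2t}-1)(\bfM_N-\bfE_{N,1})+(e^t-1)\bfE_{N,1}=(e^{2t}-1)\bfM_N-(e^{2t}-e^t)\bfE_{N,1}$ follow.
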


Note that \eqref{eq for joint cumulants} follows immediately from
\eqref{eq for generating function}, the definition
\eqref{def of joint cumulant in general} of the joint cumulants, and
the identity $\log\det A=\Tr\log A$. The rest of this section is devoted to the proof of \eqref{eq for generating function}. 

\begin{rem} As a particular case of \eqref{eq for joint cumulants}, taking the first derivatives at $t=u=0$ gives
    \begin{equation}
        \begin{split}
        \E\cN_N^{\rm r}(R)
        &=
        2\Tr\bfA_N^{(1)}(R)
        =
        2\Tr\bfM_N(R)-\Tr\bfE_{N,1}(R),
        \\
        \E\cN_N^{\rm c}(R)
        &=
        2\Tr\bfB_N^{(1)}(R)
      = 
        2\Tr\bfL_N(R)
        -
        2\Tr\bfM_N(R)
        -
        2\Tr\bfE_{N,2}(R).
        \end{split}
    \end{equation}
    Consequently, for the total counting function
    $\cN_N(R)=\cN_N^{\rm r}(R)+\cN_N^{\rm c}(R)$,
    \begin{equation}
        \E\cN_N(R)
        =
        2\Tr\bfL_N(R)
        -
        \Tr\bfE_{N,1}(R)
        -
        2\Tr\bfE_{N,2}(R).
    \end{equation}
    These identities recover the finite-$N$ formulas for the expected
    numbers of real and non-real eigenvalues obtained in
    \cite[Proposition~2.1 (iii)]{ABES23}.
\end{rem}

The algebraic formula in Proposition~\ref{Prop_finite N formula}
originates from the Pfaffian representation of the generating
function for the GinOE eigenvalues \cite{FN07,FN08,BS09,So07,SW08}. For this, we first introduce some notation. Define the functionals 
\begin{align}
    \funcA_{j,k}[f(x,y)] &:= \frac{1}{2} \int_\R dx \int_\R dy \, f(x,y) e^{-N\frac{x^2+y^2}{2}} q_{j-1}(x) q_{k-1}(y) \operatorname{sgn}(y-x), \label{def of mathcal A}
    \\
    \funcB_{j,k}[g(z)] &:= i \int_\C d^2z \, g(z) e^{-N\frac{z^2+\overline{z}^2}{2}} \erfc\big(\sqrt{2N} \, |\im z|\big) \, q_{j-1}(z) q_{k-1}(\overline{z}) \sgn(\im z), \label{def of mathcal B}
\end{align}
where
\begin{equation}
    q_{2j}(z) := z^{2j}, \qquad q_{2j+1}(z) = z^{2j+1} - \frac{2j}{N} z^{2j-1}.
\end{equation}
The polynomials $q_j$ form a family of skew-orthogonal polynomials
with respect to the skew-inner product associated with the GinOE;
see \cite{FN07,FN08} and \cite[Chapter 7]{BF25}.  

For the even-dimensional GinOE, the Pfaffian representation of the
moment generating function reduces to a determinant owing to the
checkerboard structure; see \cite[Proposition~2.1]{Si17}.
Analogous determinantal representations arise for a broader class of
real asymmetric random matrices; see e.g. \cite{BJLS25,BMS25} and the
references therein.
The resulting formula provides the starting point for the proof of
Proposition~\ref{Prop_finite N formula}.

\begin{lem}[cf. {\cite[Proposition 2.1]{Si17}}] \label{lem: generating func cN counting statistics}
 Let $N=2n$ be a positive even integer and let $R\in[0,\infty]$.
Then the generating function admits the representation
\eqref{eq for generating function}, where $\bfA_N(t;R)$ and
$\bfB_N(u;R)$ are $n\times n$ matrices with entries
    \begin{align}
        [\bfA_N(t;R)]_{j,k} &=  \frac{N^{j+k-\frac{1}{2}} \funcA_{2j-1,2k}[e^{t ( a(x;R) +  a(y;R) ) }-1] }{\sqrt{2\pi \Gamma(2j-1) \Gamma(2k-1)}}, \label{def of AN t R SOP}
        \\
        [\bfB_N(u;R)]_{j,k} &=  \frac{N^{j+k-\frac{1}{2}}\funcB_{2j-1,2k}[e^{2u b(z;R)}-1]}{\sqrt{2\pi \Gamma(2j-1) \Gamma(2k-1)}}. \label{def of BN t R SOP}
    \end{align}
    Here, $\funcA_{j,k}$ and $\funcB_{j,k}$ are defined in \eqref{def of mathcal A} and \eqref{def of mathcal B}, respectively, and
    \begin{equation}
    a(x;R):=\mathbbm{1}_{[-R,R]}(x), \qquad b(z;R):=\mathbbm{1}_{\D_R}(z). 
    \end{equation} 
\end{lem}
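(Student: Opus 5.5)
The lemma is a specialisation of the general Pfaffian formula for GinOE generating functions, so I will derive it from that formula rather than from scratch. Start from the standard fact that, for even $N=2n$, the GinOE is a Pfaffian point process: for test functions $f$ on $\R$ and $g$ on $\C_+$,
\[
\E\Big[\prod_{\lambda\in\R} f(\lambda)\prod_{\lambda\in\C_+} g(\lambda)\Big]
=\frac{\operatorname{Pf}\big[\langle q_{j-1},q_{k-1}\rangle_{f,g}\big]_{j,k=1}^{N}}{\operatorname{Pf}\big[\langle q_{j-1},q_{k-1}\rangle_{1,1}\big]_{j,k=1}^{N}}
\]
(see \cite{FN07,BS09,So07,SW08}). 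The skew-inner product $\langle\cdot,\cdot\rangle_{f,g}$ is the sum of two parts: a real part with weight $f(x)f(y)e^{-N(x^2+y^2)/2}\sgn(y-x)$, and a complex part with weight $g(z)e^{-N(z^2+\bar z^2)/2}\erfc(\sqrt{2N}|\im z|)\sgn(\im z)$, integrated over $\C$ using the conjugation-extended $g$.

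I will choose $f=e^{t a(\cdot;R)}$ and $g=e^{2u b(\cdot;R)}$. Since each non-real eigenvalue in $\C_+\cap\D_R$ comes with its conjugate, this $g$ gives exactly $e^{u\cN_N^{\rm c}(R)}$. Writing $f(x)f(y)=1+(e^{t(a(x)+a(y))}-1)$, and similarly $g=1+(g-1)$, splits the Gram entries as $\langle q_{j-1},q_{k-1}\rangle_{1,1}+\funcA_{j,k}[e^{t(a+a)}-1]+\funcB_{j,k}[e^{2ub}-1]$, with the normalisations chosen as in \eqref{def of mathcal A}--\eqref{def of mathcal B}.

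Next, I will use skew-orthogonality. The unperturbed Gram matrix is block diagonal with $2\times2$ blocks $\begin{pmatrix}0&r_{j}\\-r_{j}&0\end{pmatrix}$, where $r_{j}=\langle q_{2j-2},q_{2j-1}\rangle$; a known computation gives $r_j\propto \Gamma(2j-1)N^{-(2j-1)+1/2}\sqrt{2\pi}$. The key structural input is parity. The weights $a$ and $b$ are even under $x\mapsto -x$, and $b$ is also even under $z\mapsto\bar z$, so both perturbation functionals vanish whenever $q_{j-1},q_{k-1}$ have the same parity. To see this, note that the substitution $x,y\mapsto -x,-y$ flips $\sgn(y-x)$, and the substitution $z\mapsto -z$ (respectively $z\mapsto -\bar z$) flips $\sgn(\im z)$; when the polynomial product is even, the integral therefore equals its own negative and vanishes.

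Because of this checkerboard structure, the full $N\times N$ skew matrix has the form $\begin{pmatrix}0&X\\-X^T&0\end{pmatrix}$ after reordering indices into odd and even, with $X=D+\mathcal P$. Here $D=\operatorname{diag}(r_j)$ and $\mathcal P_{jk}$ is the perturbation between $q_{2j-2}$ and $q_{2k-1}$, which is $\funcA_{2j-1,2k}+\funcB_{2j-1,2k}$. Its Pfaffian equals $\pm\det X$, and the ratio of Pfaffians becomes
\[
\det(D+\mathcal P)/\det D=\det\big(\mathrm{Id}+D^{-1/2}\mathcal P D^{-1/2}\big).
\]
Symmetrising by $D^{-1/2}$ on both sides is legitimate because the determinant is unchanged. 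This produces exactly the normalisation $N^{j+k-1/2}/\sqrt{2\pi\Gamma(2j-1)\Gamma(2k-1)}$ and hence \eqref{def of AN t R SOP}--\eqref{def of BN t R SOP}.

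The main obstacle is careful bookkeeping of signs and constants. This includes the exact value of $r_j$, the sign of the Pfaffian-to-determinant reduction (it cancels in the ratio), and the precise relation between $\funcA_{2j-1,2k}$ and $\funcB_{2j-1,2k}$ and the Gram entries, including the factor $i$ and the $\tfrac12$ in front of $\funcA$. To pin these down, I would test the formula at $t=u=0$, where it must give $1$, and against \cite[Proposition~2.1]{Si17}. I would also check the first-order term against the known expected number of real eigenvalues.
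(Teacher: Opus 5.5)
Your proposal is correct and is essentially the argument the paper relies on through \cite[Proposition~2.1]{Si17}. You insert $e^{ta}$ and $e^{2ub}$ into the GinOE Pfaffian formula, use the reflection symmetry of $a$ and $b$ to get the checkerboard structure, reduce the Pfaffian ratio to $\det(D+\mathcal P)/\det D$, and symmetrise by $D^{-1/2}$. There is one slip in the skew norm: it is $r_j=\sqrt{2\pi}\,\Gamma(2j-1)N^{-2j+1/2}$, equivalently $N^{j+k-\frac12}\bigl(\funcA_{2j-1,2k}[1]+\funcB_{2j-1,2k}[1]\bigr)=\sqrt{2\pi}\,\Gamma(2j-1)\delta_{j,k}$ (the $R=\infty$ case of the paper's decomposition lemma confirms this), not $\propto N^{-(2j-1)+1/2}$, which would give the prefactor $N^{j+k-\frac32}$. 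Your proposed $t=u=0$ check is vacuous and cannot catch this.
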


In view of Lemma~\ref{lem: generating func cN counting statistics},
to complete the proof of Proposition~\ref{Prop_finite N formula},
it remains to show that the skew-orthogonal polynomial representations
\eqref{def of AN t R SOP} and \eqref{def of BN t R SOP} reduce to the
structured matrix forms \eqref{def of AN t R in terms of M E} and
\eqref{def of BN t R in terms of L E}, respectively. 
For this purpose, write
\begin{align} \label{def of AN BN t R expansion}
    \bfA_N(t;R) = \sum_{\nu\geq1} \frac{\bfA_N^{(\nu)}(R)}{\nu!} 2^\nu t^\nu,
    \qquad
    \bfB_N(u;R) = \sum_{\nu\geq1} \frac{\bfB_N^{(\nu)}(R)}{\nu!} 2^\nu u^\nu,
\end{align}
where $\bfA_N^{(\nu)}(R)$ and $\bfB_N^{(\nu)}(R)$ are $n\times n$
matrices with entries 
\begin{align} \label{def of AN nu R}
    [\bfA_N^{(\nu)}(R)]_{j,k} &:= \frac{N^{j+k-\frac{1}{2}} \funcA_{2j-1,2k}[(a(x;R) + a(y;R))^\nu/2^\nu]}{\sqrt{2\pi\Gamma(2j-1)\Gamma(2k-1)}},
    \\  \label{def of BN nu R}
    [\bfB_N^{(\nu)}(R)]_{j,k} &:= \frac{N^{j+k-\frac{1}{2}}\funcB_{2j-1,2k}[b(z;R)]}{\sqrt{2\pi\Gamma(2j-1)\Gamma(2k-1)}} .
\end{align}
Here, we have used the fact that $b(z;R)^\nu=b(z;R)$ for every $\nu\geq1$.

Proposition~\ref{Prop_finite N formula} now follows from the following
decomposition.

\begin{lem} \label{lem: decomposition A-nu, B-nu, C-nu}
   Under the assumptions of Lemma~\ref{lem: generating func cN counting statistics}, for every positive integer $\nu$, we have
    \begin{align}
        \bfA_N^{(\nu)}(R) &= \bfM_N(R) - (1-2^{-\nu}) \bfE_{N,1}(R),
        \\
        \bfB_N^{(\nu)}(R) &= \bfL_N(R) - \bfM_N(R) - \bfE_{N,2}(R). 
    \end{align} 
Consequently, the skew-orthogonal polynomial representations
\eqref{def of AN t R SOP} and \eqref{def of BN t R SOP}
reduce to the structured matrix forms \eqref{def of AN t R in terms of M E} and \eqref{def of BN t R in terms of L E}, respectively. 
\end{lem}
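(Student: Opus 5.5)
We compute both matrices entry by entry, treating the real part $\funcA$ and the complex part $\funcB$ separately. In each case we reduce to polar or one-dimensional integrals that produce incomplete gamma functions.

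\emph{The matrix $\bfA_N^{(\nu)}$.} Since $a(x;R)\in\{0,1\}$, we have $(a(x)+a(y))^\nu/2^\nu = 2^{-\nu}(a(x)+a(y)) + (1-2^{-\nu})\,a(x)a(y)$. Indeed, this expression equals $0$, $2^{-\nu}$ or $1$ according as $a(x)+a(y)=0,1,2$. Using the antisymmetry of $\sgn(y-x)$, we rewrite it as
\[
\tfrac{1}{2}\big(a(x)+a(y)\big)-\big(\tfrac12-2^{-\nu}\big)\big(a(x)+a(y)-2a(x)a(y)\big).
\]
The second term equals $\mathbbm 1\{\text{exactly one of }x,y\in[-R,R]\}$. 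The plan is then as follows.

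(i) For the first term, with $f=a(x)$, we integrate out $y$ against the even polynomial $q_{2k-1}(y)=y^{2k-2}$ times $e^{-Ny^2/2}\sgn(y-x)$. For the odd index we use that $q_{2j-1}$ is arranged so that $e^{-Nx^2/2}q_{2j-1}(x)$ essentially has a primitive: $\frac{d}{dx}\big(x^{2j-2}e^{-Nx^2/2}\big)$ produces $-N q_{2j-1}$-type terms. Integrating by parts in $x$ should collapse $\int\!\!\int$ to $\int_{-R}^{R} x^{2j+2k-3}e^{-Nx^2}\,dx$ plus boundary terms. After the substitution $x^2\mapsto t/N$ this gives $\gamma(j+k-\tfrac32,NR^2)$, which matches $\bfM_N$ together with the normalisation $N^{j+k-1/2}$.

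(ii) The boundary terms at $x=\pm R$, together with the "exactly one" term, should assemble into $\bfE_{N,1}$. The factor $R^{2k-2}e^{-NR^2/2}$ comes from evaluating the primitive at $x=R$. The factor $\gamma(j-\tfrac12,NR^2/2)$ comes from the remaining integral $\int_0^R y^{2j-2}e^{-Ny^2/2}dy$. We then check that the $\nu$-dependence enters only through the coefficient $(1-2^{-\nu})$. I will verify this by computing the $\nu=1$ case, which must give $\bfM_N-\tfrac12\bfE_{N,1}$, and the $\nu\to\infty$ case, which is $a(x)a(y)$.

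\emph{The matrix $\bfB_N^{(\nu)}$.} We write $\funcB[b]=\funcB[1]-\funcB[1-b]$, or directly integrate over $\D_R$ in polar coordinates $z=re^{i\theta}$. Here $q_{2j-2}(z)q_{2k-1}(\bar z)$ becomes $r^{2j+2k-3}e^{i(2j-2k+1)\theta}$ up to sign and lower-order corrections from $q$. Also $e^{-N(z^2+\bar z^2)/2}=e^{-Nr^2\cos 2\theta}$ and $\erfc(\sqrt{2N}r|\sin\theta|)$. Taking the $\sgn(\im z)$ and the factor $i$ into account converts the angular integral over $(-\pi,\pi)$ to $-2\int_0^\pi\sin((2j-2k+1)\theta)\cdots$. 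The key step is to use the identity $\erfc(w)=1-\operatorname{erf}(w)$. The "$1$" part, integrated via skew-orthogonality, i.e.\ the known total normalisation
\[
\funcA[1]+\funcB[1]=\text{diagonal},
\]
yields the diagonal $\bfL_N$ with $\gamma(2j-1,NR^2)$ by a pure radial integral. The remaining part is written as the real contribution $-\bfM_N$ (the same $x^{2j+2k-3}e^{-Nx^2}$ radial structure, via the standard GinOE identity relating the real and complex skew-products), plus the leftover term, which is exactly $E_N(j,k;R)$ after the change of variables $r\mapsto$ fixed radius $R$.

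The main obstacle will be this last step: identifying the $\bfL_N-\bfM_N$ split inside the complex integral at finite radius $R$. To do this I will differentiate in $R$. Both sides vanish at $R=0$. The $R$-derivative of $\funcB_{2j-1,2k}[b]$ is a pure angular integral at radius $R$, which can be compared with $\frac{d}{dR}$ of the three claimed terms. Since $E_N$ is itself defined as an angular integral at radius $R$, the comparison amounts to an integration-by-parts identity in $\theta$ for $\sin((2j-2k+1)\theta)e^{-NR^2\cos2\theta}\erfc(\sqrt{2N}R\sin\theta)$. This identity produces the $\frac{d}{dR}\gamma$ terms.
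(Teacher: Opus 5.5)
\textbf{Part $\bfA_N^{(\nu)}$.} Your argument is essentially the paper's. The paper splits $\R^2$ into $[-R,R]^2$ and the two ``exactly one'' strips, which is the same as your affine-in-$2^{-\nu}$ decomposition. It then integrates by parts using $e^{-Ny^2/2}q_{2k-1}(y)=-\frac1N\frac{d}{dy}\big[e^{-Ny^2/2}y^{2k-2}\big]$. Three slips need fixing:
\begin{itemize}
\item Your first displayed identity is wrong: at $a(x)=a(y)=1$ it gives $1+2^{-\nu}$. The coefficient of $a(x)a(y)$ should be $1-2^{1-\nu}$. Your rewritten form is correct, so nothing downstream breaks.
\item You have the parities swapped. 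In $\funcA_{2j-1,2k}$ the even polynomial $q_{2j-2}(x)=x^{2j-2}$ sits on $x$, and the odd polynomial $q_{2k-1}(y)$ sits on $y$. So the integration by parts is in $y$, and the strip $\{|x|>R,\ |y|\le R\}$ contributes $0$ by oddness.
\item The collapsed integral is $N^{j+k-\frac32}\int_{-R}^R x^{2j+2k-4}e^{-Nx^2}\,dx=\gamma(j+k-\tfrac32,NR^2)$. With your exponent $2j+2k-3$ the integrand is odd and the integral would vanish.
\end{itemize}

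\textbf{Part $\bfB_N^{(\nu)}$.} Discard the paragraph built on $\erfc=1-\operatorname{erf}$. The weight $e^{-N(z^2+\bar z^2)/2}=e^{-Nr^2\cos2\theta}$ is not radial, so the ``$1$'' part is not a radial integral and does not produce $\gamma(2j-1,NR^2)\delta_{j,k}$. The skew-orthogonality relation is a statement at $R=\infty$ with the $\erfc$ included; without the $\erfc$ the integral over $\C$ diverges. So it gives nothing at finite $R$. Also, with $m=2j-2k+1$,
\[
q_{2j-2}(z)q_{2k-1}(\bar z)=R^{2j+2k-3}e^{i(m-2)\theta}-\tfrac{2k-2}{N}R^{2j+2k-5}e^{im\theta},
\]
and both terms are exact and both are needed; neither is a lower-order correction.

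Your $R$-differentiation plan does close on its own. Set $\rho=NR^2$, $a=j+k-\frac32$, $h(\theta)=e^{-\rho\cos2\theta}\erfc(\sqrt{2N}R\sin\theta)$ and $I_m=\int_0^\pi\sin(m\theta)h\,d\theta$.
\begin{itemize}
\item The derivative of $N^{j+k-\frac12}\funcB_{2j-1,2k}[b]$ is $-\frac{2\rho^a}{R}\big(\rho I_{m-2}-(2k-2)I_m\big)$.
\item Differentiating $-E_N=\rho^aI_m$ uses $\partial_Rh=-2NR\cos(2\theta)h-\frac{2\sqrt{2N}}{\sqrt\pi}e^{-\rho}\sin\theta$, which brings in $I_{m+2}$.
\item Integrating $\partial_\theta[\cos(m\theta)h]$ over $[0,\pi]$ eliminates it:
\[
\rho I_{m+2}=\rho I_{m-2}+mI_m-2e^{-\rho}+\sqrt{2\pi N}\,R\,e^{-\rho}(\delta_{m,1}+\delta_{m,-1}).
\]
\end{itemize}
After substituting this recurrence:
\begin{itemize}
\item the term $-2e^{-\rho}$ cancels $\frac{d}{dR}\gamma(a,\rho)$;
\item the $\delta_{m,-1}$ terms cancel each other;
\item the $\delta_{m,1}$ terms cancel $\frac{d}{dR}\sqrt{2\pi}\,\gamma(2j-1,\rho)\delta_{j,k}$;
\item $2a-m=2(2k-2)$ matches what remains.
\end{itemize}
Both sides vanish at $R=0$, so the identity holds.

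\textbf{Comparison with the paper.} The paper moves the derivative off $q_{2k-1}$ and applies Green's theorem on the half-disc $\D_R\cap\HH$. There $\partial_{\bar z}$ falling on the $\erfc$ produces the genuinely radial weight $e^{-N|z|^2}$, which gives $\bfL_N$. The real diameter gives $-\bfM_N$, and the arc gives $-\bfE_{N,2}$. The paper's route therefore derives the decomposition and shows where each term comes from. Your route only verifies an answer known in advance, but it needs nothing beyond one-variable calculus and a three-term recurrence in $m$.
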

\begin{proof}
By \eqref{def of mathcal A} and \eqref{def of AN nu R}, we have 
    \begin{equation}
        [\bfA_N^{(\nu)}(R)]_{j,k} = \frac{A_1 + 2^{-\nu} (A_2 + A_3)}{\sqrt{2\pi\Gamma(2j-1)\Gamma(2k-1)}}
    \end{equation}
    where
    \begin{align}
        A_\ell := \frac{N^{j+k-\frac{1}{2}}}{2} \iint_{D_\ell} dx \, dy \, e^{-N\frac{x^2+y^2}{2}} q_{2j-2}(x) q_{2k-1}(y) \sgn(y-x),
        \qquad
        \ell=1,2,3,
    \end{align}
with the domains of integration given by 
    \begin{align}
        D_1 :=  [-R,R]^2,
        \qquad
        D_2 := [-R,R] \times (\R \setminus [-R,R]),
        \qquad
        D_3 := (\R \setminus [-R,R]) \times [-R,R]. 
    \end{align}

    Since $q_{2k-1}$ is odd, we readily obtain
    \begin{align*}
        A_3 = \frac{N^{j+k-\frac{1}{2}}}{2} \bigg( \int_{-\infty}^{-R} - \int_R^\infty  \bigg) dx \int_{-R}^R dy \, e^{-N\frac{x^2 + y^2}{2}} q_{2j-2}(x) q_{2k-1}(y) = 0.
    \end{align*} 
Note here that for any positive integer $k$, 
\begin{equation} \label{eq: derivative SOP odd-even relation}
    e^{-N\frac{x^2}{2}}q_{2k-1}(x) = -\frac{1}{N} \frac{d}{dx}\Big[e^{-N\frac{x^2}{2}} q_{2k-2}(x) \Big].
\end{equation}
Since $q_{2j-2}$ is even, integration by parts
together with \eqref{eq: derivative SOP odd-even relation} gives 
    \begin{align*}
        A_2 &= \frac{N^{j+k-\frac{1}{2}}}{2} \int_{-R}^R dx \bigg( \int_R^\infty - \int_{-\infty}^{-R} \bigg) dy \, e^{-N\frac{x^2 + y^2}{2}} q_{2j-2}(x) q_{2k-1}(y)
        \\
        &= N^{j+k-\frac{3}{2}}e^{-\frac{NR^2}{2}} q_{2k-2}(R) \int_{-R}^R dx \, e^{-N\frac{x^2}{2}} q_{2j-2}(x) = 2^{j-\frac{1}{2}} N^{k-1} R^{2k-2} e^{-\frac{NR^2}{2}} \gamma(j-\tfrac{1}{2}, \tfrac{NR^2}{2}).
    \end{align*} 
Similarly,
    \begin{align*}
        A_1 &= \frac{N^{j+k-\frac{1}{2}}}{2} \int_{-R}^R dx \, \bigg( \int_x^R - \int_{-R}^x\bigg) dy \, e^{-N\frac{x^2+y^2}{2}} q_{2j-2}(x) q_{2k-1}(y)
        \\
        &= N^{j+k-\frac{3}{2}}\int_{-R}^R dx \, e^{-N\frac{x^2}{2}} q_{2j-2}(x) \Big( e^{-N\frac{x^2}{2}} q_{2k-2}(x) - e^{-\frac{NR^2}{2}} q_{2k-2}(R) \Big)
        = \gamma(j+k-\tfrac{3}{2}, NR^2) - A_2.
    \end{align*}
Combining these identities yields
\begin{align}
    A_1+2^{-\nu}(A_2+A_3) =\gamma(j+k-\tfrac32,NR^2) -(1-2^{-\nu})A_2,
\end{align}
which proves the asserted representation of
$\bfA_N^{(\nu)}(R)$.

Next, we prove the asserted representation of
$\bfB_N^{(\nu)}(R)$. 
Set
\begin{equation}
    W_N(z)
    :=
    \erfc\bigl(\sqrt{2N}\,|\im z|\bigr)
    e^{-N\frac{z^2+\bar z^2}{2}}.
\end{equation}
Then, by \eqref{def of mathcal B} and \eqref{def of BN nu R},
\begin{align*}
    [\bfB_N^{(\nu)}(R)]_{j,k}
    &=
    \frac{iN^{j+k-\frac12}}
    {\sqrt{2\pi\Gamma(2j-1)\Gamma(2k-1)}}
    \int_{\D_R}
    W_N(z)\,
    q_{2j-2}(z)q_{2k-1}(\bar z)
    \sgn(\im z)\,d^2z
    \\
    &=
    \frac{iN^{j+k-\frac12}}
    {\sqrt{2\pi\Gamma(2j-1)\Gamma(2k-1)}}
    \int_{\D_R\cap\HH}
    W_N(z)
    \Big[
        q_{2j-2}(z)q_{2k-1}(\bar z)
        -
        q_{2j-2}(\bar z)q_{2k-1}(z)
    \Big]
    \,d^2z.
\end{align*}
Now, we use Green's theorem in the complex form
\begin{equation}
    \int_D \partial_{\bar z} f(z)\,d^2z
    =
    \frac{1}{2i}\int_{\partial D} f(z)\,dz,
    \qquad
    \int_D \partial_z f(z)\,d^2z
    =
    -\frac{1}{2i}\int_{\partial D} f(z)\,d\bar z,
\end{equation}
where $\partial D$ is oriented counterclockwise. 
Note that by \eqref{eq: derivative SOP odd-even relation}, we have
\begin{align*}
    e^{-N\frac{\bar z^2}{2}}q_{2k-1}(\bar z)
    =
    -\frac{1}{N}\partial_{\bar z}
    \Big[
        e^{-N\frac{\bar z^2}{2}}q_{2k-2}(\bar z)
    \Big], \qquad 
    e^{-N\frac{z^2}{2}}q_{2k-1}(z)
    =
    -\frac{1}{N}\partial_z
    \Big[
        e^{-N\frac{z^2}{2}}q_{2k-2}(z)
    \Big].
\end{align*}
Moreover, for $z\in\HH$,
\begin{align*}
    \partial_{\bar z}
    \erfc\bigl(\sqrt{2N}\,\im z\bigr)
    =
    -i\sqrt{\frac{2N}{\pi}}\,
    e^{-2N(\im z)^2},
    \qquad 
    \partial_z
    \erfc\bigl(\sqrt{2N}\,\im z\bigr)
    =
    i\sqrt{\frac{2N}{\pi}}\,
    e^{-2N(\im z)^2}.
\end{align*}
Applying integration by parts and Green's theorem therefore gives
\begin{equation}
    [\bfB_N^{(\nu)}(R)]_{j,k}
    =
    \frac{B_1+B_2}
    {\sqrt{2\pi\Gamma(2j-1)\Gamma(2k-1)}},
\end{equation}
where
\begin{align}
    B_1
    &:=
    N^{j+k-1}\sqrt{\frac{2}{\pi}}
    \int_{ \D_R\cap\HH }
    e^{-N|z|^2}
    \Big[
        q_{2j-2}(z)q_{2k-2}(\bar z)
        +
        q_{2j-2}(\bar z)q_{2k-2}(z)
    \Big]
    \,d^2z,
    \\
    B_2
    & :=
    -\frac{N^{j+k-\frac32}}{2}
    \bigg[
        \int_{\partial ( \D_R\cap\HH )}
        W_N(z)\,
        q_{2j-2}(z)q_{2k-2}(\bar z)\,dz
        +
        \int_{\partial ( \D_R\cap\HH )}
        W_N(z)\,
        q_{2j-2}(\bar z)q_{2k-2}(z)\,d\bar z
    \bigg].
\end{align}

For $B_1$, passing to polar coordinates $z=re^{i\theta}$ gives
\begin{align*}
    B_1
    &=
    N^{j+k-1}\sqrt{\frac{2}{\pi}}
    \int_0^R dr\int_0^\pi d\theta\,
    e^{-Nr^2}r^{2j+2k-3}
    \Big(
        e^{2i(j-k)\theta}
        +
        e^{-2i(j-k)\theta}
    \Big)
    =
    \sqrt{2\pi}\,
    \gamma(2j-1, N R^2)\,
    \delta_{j,k}.
\end{align*}
For $B_2$, we decompose the boundary $\partial(\D_R\cap\HH)$
into the interval $[-R,R]$ and the semicircular arc
\begin{equation}
    C:=\partial(\D_R\cap\HH)\setminus[-R,R].
\end{equation}
The contribution from $[-R,R]$ is
\begin{align*}
    -N^{j+k-\frac32}
    \int_{-R}^R
    e^{-Nx^2}x^{2j+2k-4}\,dx
    =
    -\gamma\left(j+k-\tfrac32,NR^2\right).
\end{align*}
On the other hand, note that
\begin{equation*}
    z^{2j-2}\bar z^{2k-2}\,dz
    +
    \bar z^{2j-2}z^{2k-2}\,d\bar z
    =
    -2R^{2j+2k-3}
    \sin\bigl((2j-2k+1)\theta\bigr)\,d\theta.
\end{equation*}
Then by parametrising $z=Re^{i\theta}$, $0\leq\theta\leq\pi$, we obtain
\begin{align*}
    B_2
    &=
    -\gamma(j+k-\tfrac32,NR^2)
    -E_N(j,k;R).
\end{align*} Substituting these identities into the expression above  completes the proof.
\end{proof}

\section{Asymptotic analysis of structured trace powers} \label{sec: asymptotic analysis}

In this section, we develop the asymptotic analysis required to study
the large-$N$ behaviour of the joint cumulants. Our starting point is
the finite-$N$ representation \eqref{eq for joint cumulants} derived
in the previous section. Recall the matrix expansion
\begin{equation} \label{eq: trace log expansion}
    \Tr\log\bigl[\mathrm{Id}_n+X\bigr]
    =
    \sum_{m=1}^{\infty}
    \frac{(-1)^{m-1}}{m}\Tr(X^m),
\end{equation}
valid whenever the series is well defined.

In view of \eqref{eq for joint cumulants} and
\eqref{eq: trace log expansion}, the analysis of the joint cumulants
reduces to understanding the asymptotic behaviour of trace products
involving $\bfA_N(t;R)$ and $\bfB_N(u;R)$. By
\eqref{def of AN t R in terms of M E} and
\eqref{def of BN t R in terms of L E}, these can in turn be expressed
in terms of the four structured matrices
$\bfL_N(R)$, $\bfM_N(R)$, $\bfE_{N,1}(R)$, and $\bfE_{N,2}(R)$.
Thus, we are led to analyse trace products of the form
\begin{equation} \label{def of mixed trace powers}
    \Tr\bigg[
        \prod_{\nu=1}^p
        \bfE_{N,1}(R)^{a_\nu}
        \bfE_{N,2}(R)^{b_\nu}
        \bfM_N(R)^{c_\nu}
        \bfL_N(R)^{d_\nu}
    \bigg], 
\end{equation}
where $p$ is fixed and
$a_\nu,b_\nu,c_\nu,d_\nu$ are nonnegative integers.

The four structured matrices in
Proposition~\ref{Prop_finite N formula} play two distinct roles.
As we shall show, $\bfL_N(R)$ and $\bfM_N(R)$ determine the leading
contributions to the joint cumulants, whereas
$\bfE_{N,1}(R)$ and $\bfE_{N,2}(R)$ contribute only lower-order terms.
More precisely, the asymptotic analysis consists of the following
three ingredients:
\begin{itemize}
    \item \textbf{Main contributions.}
    We derive precise asymptotic formulas for the pure trace powers
    \begin{equation}
        \Tr\bigl[\bfL_N(R)^m\bigr],
        \qquad
        \Tr\bigl[\bfM_N(R)^m\bigr],
    \end{equation}
    which provide the leading-order contributions to the joint
    cumulants; see Lemmas~\ref{lem: trace power bfL-n-R asymp} and
    \ref{lem: trace power bfM-n-R asymp}.

    \smallskip

    \item \textbf{Mixed trace powers.}
    We show that mixed trace products involving only $\bfL_N(R)$ and
    $\bfM_N(R)$ reduce, up to a lower-order error, to the corresponding
    pure trace powers of $\bfM_N(R)$; see
    Lemma~\ref{lem: trace power mixed products only bfM bfL}.

    \smallskip

    \item \textbf{Error terms.}
    We show that any trace product containing at least one factor of
    $\bfE_{N,1}(R)$ or $\bfE_{N,2}(R)$ is negligible at the relevant
    scale; see
    Lemma~\ref{lem: trace power mixed products including bfE}.
\end{itemize}
Consequently, the leading-order asymptotics of the joint cumulants are
entirely determined by $\bfL_N(R)$ and $\bfM_N(R)$.

The section is organised as follows. In
Subsection~\ref{sec: matrix elements}, we establish entrywise estimates
for the four structured matrices. In
Subsection~\ref{sec: pure trace powers}, we analyse the pure trace
powers of $\bfL_N(R)$ and $\bfM_N(R)$, leading to
Lemmas~\ref{lem: trace power bfL-n-R asymp} and
\ref{lem: trace power bfM-n-R asymp}. Finally, in
Subsection~\ref{sec: mixed trace powers}, we study mixed trace products
and establish
Lemmas~\ref{lem: trace power mixed products only bfM bfL} and
\ref{lem: trace power mixed products including bfE}.
Together, these results provide the asymptotic input required for the
proof of the main results in the following section.

\subsection{Matrix elements} \label{sec: matrix elements}

Throughout the subsequent analysis, we will repeatedly use the uniform
asymptotic expansion of the incomplete gamma function. For convenience,
we recall the relevant expansion from \cite[Section~8.12]{NIST}. For
$a,z>0$, we have 
\begin{equation} \label{eq: asymp incomplete gamma}
    \frac{\gamma(a,z)}{\Gamma(a)} = \frac{1}{2} \operatorname{erfc}\Big( - \eta \sqrt{a/2} \Big) + F(a, \eta), 
\end{equation}
where
\begin{equation}
    \lambda := \frac{z}{a},
    \qquad
    \eta := \Big( 2 (\lambda - 1 - \log \lambda) \Big)^{1/2}. 
\end{equation}
Here, the branch of the square root is chosen so that
\begin{equation}
    \eta=\lambda-1+o(\lambda-1)
    \qquad
    \text{as }\lambda\to1.
\end{equation}
The remainder term $F(a,\eta)$ admits the asymptotic expansion
\begin{equation}
    F(a,\eta) \sim \frac{e^{-a \eta^2/2}}{\sqrt{2\pi a}} \sum_{k=0}^\infty c_k(\eta) a^{-k},
\end{equation}
where the coefficients $c_k(\eta)$ are explicitly computable.

Using this expansion, we first derive an entrywise asymptotic estimate
for $\bfM_N(R)$ defined in \eqref{eq: def of bfL and bfM}.

\begin{lem} \label{lem: element estimates bfM-n-R}
Suppose that $R\equiv R_N>0$ satisfies
\begin{equation}
    R_N\to r\in(0,\infty]
    \qquad
    \text{as }N\to\infty.
\end{equation}
For $j,k\in\{1,\ldots,n\}$, set
\begin{equation} \label{def of x and y}
    x:=\frac{2j-1}{NR^2},
    \qquad
    y:=\frac{2k-1}{NR^2}.
\end{equation}
Then, for every fixed $\epsilon\in(0,1)$, as $N\to\infty$,
\begin{equation}
    \begin{split}
    [\bfM_N(R)]_{j,k}
    =
    \sqrt{\frac{2}{\pi}}\,
    \frac{(xy)^{1/4}}{\sqrt{N}R(x+y)} 
    \exp\bigg[
        -\frac{NR^2}{2}
        \Big(
            x\log x
            +
            y\log y
            -
            (x+y)\log\frac{x+y}{2}
        \Big)
        +
        O\Big(
            \frac{1}{j}
            +
            \frac{1}{k}
        \Big)
    \bigg].
    \end{split}
\end{equation}
The error term is uniform over all $j,k\in\{1,\ldots,n\}$
satisfying $ 0<x,y\leq1-\epsilon.$ 
\end{lem}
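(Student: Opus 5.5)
The plan is to treat the entry as a ratio of Gamma-type quantities. In the regime $x,y\le 1-\epsilon$ the argument of the incomplete gamma function dominates its order, so $\gamma$ can be replaced by $\Gamma$ up to an exponentially small error. Write $a:=j+k-\tfrac32$ and $z:=NR^2$. Then $a=\tfrac{NR^2}{2}(x+y)$, so
\[
\lambda=\frac{z}{a}=\frac{2}{x+y}\ge\frac{1}{1-\epsilon}>1 .
\]
Hence $\eta$ is bounded below by a positive constant depending only on $\epsilon$.

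By the uniform expansion \eqref{eq: asymp incomplete gamma}, we get $\gamma(a,z)/\Gamma(a)=1-O(e^{-ca})$. The expansion of $F(a,\eta)$, or directly the tail $\Gamma(a,z)\le z^{a-1}e^{-z}\cdot C$ for $z\ge(1+\delta)a$, shows that $\Gamma(a,z)/\Gamma(a)\le e^{-c a}$ with $c=c(\epsilon)>0$. Since $a\ge \tfrac12$ always, and $e^{-ca}=O(1/j+1/k)$ uniformly, we may write
\[
\frac{\gamma(a,z)}{\Gamma(a)}=\exp\bigl(O(1/j+1/k)\bigr).
\]
Here one checks that $1-e^{-ca}$ stays bounded away from $0$: for small $j,k$ this follows because the ratio $\gamma/\Gamma$ is a fixed positive number tending to $1$ as $z\to\infty$, and $z=NR^2\to\infty$. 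This reduces the claim to an asymptotic for
\[
\frac{\Gamma(j+k-\tfrac32)}{\sqrt{2\pi\,\Gamma(2j-1)\Gamma(2k-1)}}.
\]

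The second step applies Stirling's formula in the form $\log\Gamma(w)=(w-\tfrac12)\log w-w+\tfrac12\log 2\pi+O(1/w)$. It is applied to $w=a$, $2j-1$ and $2k-1$, with the $O(1/w)$ errors being $O(1/j+1/k)$. It is convenient to use $\Gamma(w)=\Gamma(w+1)/w$ to align shifts. Writing $A:=2j-1=NR^2x$ and $B:=2k-1=NR^2y$, we have $a=\tfrac{A+B}{2}-\tfrac12$. Expanding $(a-\tfrac12)\log a-a$ around $\tfrac{A+B}{2}$ gives
\[
\tfrac{A+B}{2}\log\tfrac{A+B}{2}-\log\tfrac{A+B}{2}-\tfrac{A+B}{2}+\tfrac12+O\bigl(\tfrac{1}{A+B}\bigr).
\]
Subtracting $\tfrac12[(A-\tfrac12)\log A-A+(B-\tfrac12)\log B-B]$ and $\tfrac12\log(2\pi)$ for the normalisation, the linear terms cancel.

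Substituting $A=NR^2x$ and $B=NR^2y$, the $\log(NR^2)$ pieces in the terms of order $NR^2$ cancel. This leaves the exponent
\[
-\tfrac{NR^2}{2}\Bigl(x\log x+y\log y-(x+y)\log\tfrac{x+y}{2}\Bigr).
\]
The prefactor is
\[
\frac{(AB)^{1/4}}{(A+B)/2}\cdot\frac{1}{\sqrt{2\pi}}\cdot\frac{\sqrt{2\pi}}{2\pi}\cdots
\]
Collecting the $\sqrt{2\pi}$ factors should produce $\sqrt{2/\pi}\,(xy)^{1/4}/(\sqrt N R(x+y))$; I will verify the constant by this bookkeeping. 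The $e^{1/2}$ from the shift must also cancel against the $-\tfrac12$ hidden in the argument of $a$, and I will check that it does.

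The main obstacle is the uniformity claim. Stirling's error must be $O(1/w)$ uniformly down to $w=1$. This holds since $\log\Gamma(w)-$(Stirling) is bounded by $C/w$ for all $w\ge \tfrac12$, and the ratio $(2j-1)/j$ is bounded. One must also check that the exponentially small incomplete-gamma correction is absorbed into $O(1/j+1/k)$ uniformly, including when $j,k$ are bounded while $N\to\infty$. In that case $z\to\infty$ with $a$ fixed, so the ratio tends to $1$ and is harmless. I would handle this with the elementary bound on $\Gamma(a,z)$ rather than the full uniform expansion.
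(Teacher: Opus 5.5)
Your proposal is correct and follows the paper's proof: since $\lambda=NR^2/(j+k-\tfrac32)>1/(1-\epsilon)$, the ratio $\gamma/\Gamma$ equals $1$ up to an exponentially small relative error, and Stirling's formula applied to $\Gamma(j+k-\tfrac32)$, $\Gamma(2j-1)$ and $\Gamma(2k-1)$ gives the stated exponent and prefactor. Fix two bookkeeping slips when you write it out. First, $a=\tfrac{NR^2}{2}(x+y)-\tfrac12$ rather than $\tfrac{NR^2}{2}(x+y)$, which is harmless since it only increases $\lambda$. Second, with $S=\tfrac{A+B}{2}$, the stray $+\tfrac12$ in your expansion is cancelled by $(S-1)\log(1-\tfrac{1}{2S})=-\tfrac12+O(1/S)$, and the $2\pi$ factors net to $1/\sqrt{2\pi}$, so the prefactor is $\frac{(AB)^{1/4}}{S\sqrt{2\pi}}=\sqrt{2/\pi}\,\frac{(xy)^{1/4}}{\sqrt N R(x+y)}$.
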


\begin{proof}
Since $x,y\leq1-\epsilon$, we have
\begin{equation}
    j+k-\frac32
    =
    \frac{NR^2}{2}(x+y)-\frac12
    \leq
    (1-\epsilon)NR^2-\frac12.
\end{equation}
Thus, the argument $NR^2$ of the incomplete gamma function lies in the definition \eqref{eq: def of bfL and bfM} of $\bfM_N(R)$ uniformly to the right of its transition point. It follows from
\eqref{eq: asymp incomplete gamma} that, for some $c_\epsilon>0$,
\begin{equation}
    \gamma(j+k-\tfrac32,NR^2)
    =
    \Gamma(j+k-\tfrac32)
    \Big(1+O(e^{-c_\epsilon NR^2})\Big),
\end{equation}
uniformly in the stated range. Hence,
\begin{equation}
    [\bfM_N(R)]_{j,k}
    =
    \frac{
        \Gamma(j+k-\frac32)
    }{
        \sqrt{2\pi\Gamma(2j-1)\Gamma(2k-1)}
    }
    \Big(1+O(e^{-c_\epsilon NR^2})\Big).
\end{equation}
Recall that Stirling's approximation
\cite[Eqs.~(5.6.1), (5.11.1)]{NIST} gives
\begin{equation} \label{eq for Stirling}
    \Gamma(x)
    =
    \exp\Big[
        (x-\tfrac12)\log x
        -
        x
        +
        \tfrac12\log(2\pi)
        +
        O(1/x)
    \Big],
\end{equation}
where the error term is uniform for $x\geq\epsilon$, for any fixed
$\epsilon>0$. Applying \eqref{eq for Stirling} to the above ratio of
gamma functions and using
\begin{equation}
    2j-1=NR^2x,
    \qquad
    2k-1=NR^2y,
    \qquad
    j+k-1=\frac{NR^2}{2}(x+y),
\end{equation}
we obtain the desired asymptotic formula.
\end{proof}

We next establish an entrywise estimate for the matrix
$\bfE_{N,1}(R)$.

\begin{lem} \label{lem: element estimates bfE-n-R-1}
Under the assumptions of
Lemma~\ref{lem: element estimates bfM-n-R}, as $N\to\infty$, we have
\begin{equation}
    \begin{split}
    \big|[\bfE_{N,1}(R)]_{j,k}\big|
     \leq
    \frac{1}{\sqrt{\pi N}R}
    \Big(\frac{k}{j}\Big)^{1/4}
    \exp\bigg[
        -\frac{NR^2}{2}
        \bigl(y\log y-y+1\bigr)
        +
        O\Big(
            \frac{1}{j}
            +
            \frac{1}{k}
        \Big)
    \bigg],
    \end{split}
\end{equation}
where $y$ is given by \eqref{def of x and y}.  
The error term is uniform for all
$j,k\in\{1,\ldots,n\}$.
\end{lem}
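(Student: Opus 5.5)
The plan is to bound the single factor $\gamma(j-\tfrac12,\tfrac{NR^2}{2})$ crudely and then evaluate the rest of $[\bfE_{N,1}(R)]_{j,k}$ exactly via the duplication formula and Stirling's approximation \eqref{eq for Stirling}. The entry is nonnegative, so no cancellation needs tracking. Since $\gamma(a,z)\le\Gamma(a)$ for $a,z>0$, we get
\begin{equation*}
0\le[\bfE_{N,1}(R)]_{j,k}\le \frac{2^{j-\frac12}\Gamma(j-\frac12)}{\sqrt{\Gamma(2j-1)}}\cdot\frac{(NR^2)^{k-1}e^{-\frac{NR^2}{2}}}{\sqrt{2\pi\,\Gamma(2k-1)}}.
\end{equation*}
The first factor depends only on $j$ and the second only on $k$, so the two can be handled separately. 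This crude bound discards no exponential factor depending on $j$, which matches the fact that the claimed exponent involves only $y$.

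\textbf{The $j$-factor.} Use the duplication formula $\Gamma(2j-1)=\pi^{-1/2}2^{2j-2}\Gamma(j-\tfrac12)\Gamma(j)$. This turns the $j$-factor into $\sqrt2\,\pi^{1/4}\bigl(\Gamma(j-\tfrac12)/\Gamma(j)\bigr)^{1/2}$. By \eqref{eq for Stirling}, which is uniform for arguments bounded below by $1/2$, this equals $\sqrt2\,\pi^{1/4}j^{-1/4}e^{O(1/j)}$ uniformly in $j\ge1$.

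\textbf{The $k$-factor.} Set $a:=2k-1=NR^2y$, so that $(NR^2)^{k-1}=(NR^2)^{(a-1)/2}$, and apply \eqref{eq for Stirling} to $\Gamma(a)$ with $\log a=\log(NR^2)+\log y$. The logarithm of the $k$-factor is then
\begin{equation*}
-\tfrac14\log(NR^2)+\tfrac14\log y-\frac{NR^2}{2}(y\log y-y+1)-\tfrac34\log(2\pi)+O(1/k).
\end{equation*}
Here one checks that all terms of order $a\log(NR^2)$ cancel. Since $y^{1/4}(NR^2)^{-1/4}=a^{1/4}/(\sqrt N R)$, the $k$-factor equals
\begin{equation*}
(2\pi)^{-3/4}\,\frac{(2k-1)^{1/4}}{\sqrt N R}\,\exp\Big[-\frac{NR^2}{2}(y\log y-y+1)+O(1/k)\Big].
\end{equation*}

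\textbf{Assembling the bound.} The constants combine as $\sqrt2\,\pi^{1/4}(2\pi)^{-3/4}=2^{-1/4}\pi^{-1/2}$. Using $(2k-1)^{1/4}\le 2^{1/4}k^{1/4}$, this produces exactly $(k/j)^{1/4}/(\sqrt{\pi N}R)$ times the stated exponential. Both error terms, $O(1/j)$ and $O(1/k)$, are uniform over all $j,k\in\{1,\ldots,n\}$. No restriction such as $x,y\le1-\epsilon$ is needed, because we never used an asymptotic form of the incomplete gamma function here, unlike in Lemma~\ref{lem: element estimates bfM-n-R}.

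The only point I expect to need care is the bookkeeping in the Stirling step for the $k$-factor: one must verify that the $a\log(NR^2)$ terms cancel, so that the exponent collapses to $-\frac{NR^2}{2}(y\log y-y+1)$, which is nonpositive and vanishes at $y=1$. One must also check that the $\frac14$-powers produce $k^{1/4}/(\sqrt N R)$ rather than a different power of $N$. The remaining steps are routine.
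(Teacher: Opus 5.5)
Your proposal is correct and is essentially the paper's argument: bound $\gamma(j-\tfrac12,\tfrac{NR^2}{2})\le\Gamma(j-\tfrac12)$, then apply Stirling's formula \eqref{eq for Stirling} to the remaining gamma functions and rewrite the exponent in terms of $y$. Your duplication-formula step for the $j$-factor is only a cleaner way to organise the same Stirling computation; the paper instead applies Stirling directly to $\Gamma(j-\tfrac12)$ and $\Gamma(2j-1)$, and absorbs the discrepancy between $(2k-1)^{1/4}(2j-1)^{-1/4}$ and $(k/j)^{1/4}$ into the $O(1/j+1/k)$ term.
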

\begin{proof}
Using $\gamma(a,z)\leq\Gamma(a)$ for $a,z>0$, it follows from
\eqref{eq: def of bfE1 and bfE2} that
\begin{equation*}
    \begin{split}
    \big|[\bfE_{N,1}(R)]_{j,k}\big|
    &\leq
    \frac{
        2^{j-\frac12}(NR^2)^{k-1}
        e^{-\frac{NR^2}{2}}
        \Gamma(j-\tfrac12)
    }{
        \sqrt{
            2\pi
            \Gamma(2j-1)
            \Gamma(2k-1)
        }
    }.
    \end{split}
\end{equation*}
Applying Stirling's approximation \eqref{eq for Stirling}, we obtain
\begin{equation*}
    \begin{split}
    \big|[\bfE_{N,1}(R)]_{j,k}\big|
    &\leq
    \frac{1}{
        \sqrt{\pi}
        (2j-1)^{1/4}
        (2k-1)^{1/4}
    } 
    \exp\bigg[
        -\frac{NR^2}{2}
        -(k-1)\log\Big(\frac{2k-1}{NR^2}\Big)
        +k-\frac12
        +
        O\Big(
            \frac1j+\frac1k
        \Big)
    \bigg].
    \end{split}
\end{equation*}
Using $y=(2k-1)/(NR^2)$ and rearranging the prefactor and
exponential terms, we obtain the assertion. 
\end{proof}

We end this subsection by establishing an entrywise estimate for the matrix $\bfE_{N,2}(R)$.

\begin{lem} \label{lem: element estimates bfE-n-R-2}
    Under the assumptions of Lemma~\ref{lem: element estimates bfM-n-R}, we have as $N\to\infty$ that
    \begin{equation}
    \begin{split}
        \big|[\bfE_{N,2}(R)]_{j,k} \big| &\leq \sqrt{\frac{2}{\pi}} \frac{ (2j-1)^{1/4}(2k-1)^{1/4} }{N R^2} 
        \exp\Big[ - \frac{N R^2}{2} (x \log x - x + y \log y - y +2) + O\Big(\frac{1}{j}+\frac{1}{k}\Big) \Big],
    \end{split}
    \end{equation}
    where $x$ and $y$ are given by \eqref{def of x and y}. 
    Here, the error term is uniform for all $j,k \in \{1, \ldots, n\}$.
\end{lem}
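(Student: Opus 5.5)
We bound $E_N(j,k;R)$ crudely and then apply Stirling's formula, as in the proof of Lemma~\ref{lem: element estimates bfE-n-R-1}.

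\textbf{Step 1: crude bound.} Since $|\sin((2j-2k+1)\theta)|\le 1$ and $0<\erfc(\sqrt{2N}R\sin\theta)\le 1$ on $[0,\pi]$ (note $\sin\theta\ge0$ there), the integral is bounded by $\int_0^\pi e^{-NR^2\cos2\theta}\,d\theta$. This last integral equals $\pi I_0(NR^2)\le \pi e^{NR^2}$.

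That bound is too weak, because the target exponent
\[
-\tfrac{NR^2}{2}(x\log x - x+y\log y - y+2)
\]
carries an $e^{-NR^2}$ after multiplication by $(NR^2)^{j+k-3/2}$ and division by the gamma factors. So we keep the $\erfc$ factor. Near $\theta=\pi/2$, where $\cos2\theta=-1$, we have $\sin\theta\approx1$, and $\erfc(\sqrt{2N}R)\le e^{-2NR^2}$ more than compensates. Combine
\[
\erfc(w)\le e^{-w^2},\quad w\ge0,
\]
with $\cos2\theta=1-2\sin^2\theta$. This gives
\[
e^{-NR^2\cos2\theta}\,\erfc(\sqrt{2N}R\sin\theta)\le e^{-NR^2\cos2\theta-2NR^2\sin^2\theta}=e^{-NR^2}.
\]
Hence
\[
|E_N(j,k;R)|\le \pi\,(NR^2)^{j+k-\frac32}e^{-NR^2}.
\]

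\textbf{Step 2: Stirling.} We divide by $\sqrt{2\pi\Gamma(2j-1)\Gamma(2k-1)}$ and apply \eqref{eq for Stirling} to each gamma factor, writing $2j-1=NR^2x$ and $2k-1=NR^2y$. We have
\[
\log\Gamma(2j-1)=(2j-\tfrac32)\log(NR^2x)-NR^2x+\tfrac12\log 2\pi+O(1/j).
\]
Split $(NR^2)^{j+k-3/2}$ as $(NR^2)^{(2j-1)/2}(NR^2)^{(2k-1)/2}(NR^2)^{-1/2}$. In the exponent, the $\log(NR^2)$ terms cancel. What remains is
\[
-\tfrac{NR^2}{2}(x\log x-x+y\log y-y)-NR^2,
\]
together with leftover powers $(NR^2x)^{1/4}(NR^2y)^{1/4}=(2j-1)^{1/4}(2k-1)^{1/4}$ and a leftover $(NR^2)^{-1/2}$ from each of the two quarter-power bookkeepings. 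Collecting constants, $\pi/\sqrt{2\pi}\cdot(2\pi)^{-1/2}=1/2$. The claimed constant $\sqrt{2/\pi}$ has room to spare, so any slack in the constant is harmless.

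\textbf{Main obstacle.} The only nontrivial point is Step 1: seeing that the $\erfc$ factor must be retained and that the pointwise bound $\erfc(w)\le e^{-w^2}$ exactly cancels the growth of $e^{-NR^2\cos 2\theta}$. The rest is bookkeeping identical to Lemma~\ref{lem: element estimates bfE-n-R-1}. Uniformity in $j,k\ge1$ comes from the uniform Stirling error $O(1/j+1/k)$, valid since $2j-1,2k-1\ge1$.
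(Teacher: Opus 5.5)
There is a genuine gap. Your Step~1 estimate $|E_N(j,k;R)|\le \pi\,(NR^2)^{j+k-\frac32}e^{-NR^2}$ is correct, but it is too weak by a factor of order $\sqrt{NR^2}$, and the Step~2 bookkeeping hides this.

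After the $\log(NR^2)$ terms cancel, the only surviving power of $NR^2$ is the single $(NR^2)^{-1/2}$ from $j+k-\frac32=\frac{2j-1}{2}+\frac{2k-1}{2}-\frac12$. The terms $\frac14\log(NR^2x)$ and $\frac14\log(NR^2y)$ produced by $\Gamma(2j-1)^{-1/2}\Gamma(2k-1)^{-1/2}$ give exactly $(2j-1)^{1/4}(2k-1)^{1/4}$ and leave nothing over, so there is no second factor $(NR^2)^{-1/2}$. What your argument actually gives is
\[
\big|[\bfE_{N,2}(R)]_{j,k}\big|\le \frac12\,\frac{(2j-1)^{1/4}(2k-1)^{1/4}}{\sqrt{NR^2}}\exp\Big[-\frac{NR^2}{2}(x\log x-x+y\log y-y+2)+O\Big(\frac1j+\frac1k\Big)\Big].
\]
This exceeds the asserted bound by the factor $\frac{\sqrt\pi}{2\sqrt2}\sqrt{NR^2}\to\infty$. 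That factor cannot be absorbed into the bounded error $O(1/j+1/k)$ in the exponent, so the constant does not have "room to spare" and the statement is not proved.

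The loss also matters downstream. At $x=y=1$ the lemma must give entries of size $O(n^{-1/2})$. The proof of Lemma~\ref{lem: trace power mixed products including bfE} uses exactly this to bound the localised sum by $O(|\mathcal I_n|^q n^{-q/2})$, whereas your bound gives only $O(1)$ there.

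The missing factor is what the pointwise bound $\erfc(w)\le e^{-w^2}$ discards: $e^{w^2}\erfc(w)\sim 1/(\sqrt\pi\,w)$, and $w=\sqrt{2N}R\sin\theta$ is of order $\sqrt{NR^2}$ on most of $[0,\pi]$. Keeping that decay through $e^{w^2}\erfc(w)\le\min\{1,1/(\sqrt\pi\,w)\}$ while still bounding $|\sin(m\theta)|\le1$, with $m=|2j-2k+1|$, is not enough either. It leaves an extra factor of order $1+\log\min\{m,\sqrt{NR^2}\}$, because $\int_{1/m}^{\pi/2}|\sin(m\theta)|\,\theta^{-1}\,d\theta\asymp\log m$ and $m$ can be of order $N$. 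You have to exploit the oscillation of $\sin(m\theta)$.

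The paper does this as follows:
\begin{itemize}
\item It symmetrises to $[0,\pi/2]$ and writes $e^{w^2}\erfc(w)=\frac{2}{\sqrt\pi}\int_0^\infty e^{-t^2-2wt}\,dt$.
\item It bounds $F(x,m)=\int_0^{\pi/2}\sin(m\theta)e^{-x\sin\theta}\,d\theta$ by $\min\{4/m,\,\pi^2m/(4x^2)\}$, the first bound coming from integration by parts.
\item Integrating in $t$ gives $2\pi/a$ with $a=2\sqrt{2N}R$, hence $|E_N(j,k;R)|\le 2\sqrt{2\pi}\,(NR^2)^{j+k-2}e^{-NR^2}$.
\item Stirling then yields exactly the prefactor $\sqrt{2/\pi}\,(2j-1)^{1/4}(2k-1)^{1/4}/(NR^2)$.
\end{itemize}
Another way to get the same gain: $\sin\theta\,e^{w^2}\erfc(w)$ is increasing on $[0,\pi/2]$ and bounded by $(\sqrt{2\pi N}R)^{-1}$. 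So Bonnet's second mean value theorem, together with the uniform boundedness of $\int_\xi^{\pi/2}\sin(m\theta)/\sin\theta\,d\theta$ for odd $m$, also works.
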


\begin{proof}
We first estimate $E_N(j,k;R)$ for arbitrary $R>0$. Set
\begin{equation*}
    \ell:=2j-2k+1,
    \qquad
    m:=|\ell|.
\end{equation*} 
Since $\ell$ is an odd integer, symmetry gives
\begin{equation*}
\begin{split}
\frac{E_N(j,k;R)}{(NR^2)^{j+k-\frac32}}
&=
-2\operatorname{sgn}(\ell)e^{-NR^2}
\int_0^{\pi/2}
\sin(m\theta)
e^{2NR^2\sin^2\theta}
\erfc(\sqrt{2N}R\sin\theta)\,d\theta
\\
&=
-\frac{4\operatorname{sgn}(\ell)e^{-NR^2}}{\sqrt{\pi}}
\int_0^\infty
e^{-t^2}
F(2\sqrt{2N}Rt,m)\,dt,
\end{split}
\end{equation*}
where
\begin{equation*}
    F(x,m)
    :=
    \int_0^{\pi/2}
    \sin(m\theta)e^{-x\sin\theta}\,d\theta.
\end{equation*}
Here, the second relation follows from the definition of the
complementary error function and Fubini's theorem. Consequently,
\begin{equation*}
\frac{|E_N(j,k;R)|}{(NR^2)^{j+k-\frac32}}
\leq
\frac{4e^{-NR^2}}{\sqrt{\pi}}
\int_0^\infty e^{-t^2}
|F(2\sqrt{2N}Rt,m)|\,dt.
\end{equation*}

We next estimate $F(x,m)$ for $x>0$. Integration by parts gives
\begin{equation*}
    \begin{split}
    |F(x,m)|
    &=
    \bigg|
        \frac{1-\cos(m\pi/2)}{m}e^{-x}
        +
        \frac{x}{m}
        \int_0^{\pi/2}
        \cos\theta\,e^{-x\sin\theta}
        \bigl(1-\cos(m\theta)\bigr)\,d\theta
    \bigg|
    \leq
    \frac{4}{m}.
    \end{split}
\end{equation*}
On the other hand, using
$|\sin(m\theta)|\leq m\sin\theta$ and
$\sin\theta\geq2\theta/\pi$ for $\theta\in[0,\pi/2]$, we obtain
\begin{equation*}
    |F(x,m)|
    \leq
    m\int_0^{\pi/2}
    \theta e^{-2x\theta/\pi}\,d\theta
    \leq
    \frac{\pi^2m}{4x^2}.
\end{equation*}
Combining these two estimates yields, for any $a>0$,
\begin{equation*}
    \begin{split}
    \int_0^\infty
    e^{-t^2}|F(at,m)|\,dt
    &\leq
    \int_0^\infty
    \min\bigg\{
        \frac4m,
        \frac{\pi^2m}{4a^2t^2}
    \bigg\}\,dt
    =
    \frac{2\pi}{a}.
    \end{split}
\end{equation*}
Taking $a=2\sqrt{2N}R$, we therefore obtain
\begin{equation*}
    \frac{|E_N(j,k;R)|}{(NR^2)^{j+k-\frac32}}
    \leq
    \frac{2\sqrt{2\pi}}{\sqrt{NR^2}}
    e^{-NR^2}.
\end{equation*}
Consequently,
\begin{equation}
    \big|[\bfE_{N,2}(R)]_{j,k}\big|
    \leq
    \frac{
        2(NR^2)^{j+k-2}e^{-NR^2}
    }{
        \sqrt{\Gamma(2j-1)\Gamma(2k-1)}
    }.
\end{equation}
Applying Stirling's approximation \eqref{eq for Stirling} and using \eqref{def of x and y}, we obtain the desired asymptotic behaviour. 
\end{proof}

\subsection{Pure trace powers} \label{sec: pure trace powers}

In this subsection, we analyse asymptotics for the trace powers.

We begin by recalling the first-order Euler--Maclaurin summation
formula, which will be used repeatedly throughout this subsection.
If $f\in C^1[a,b]$ for real numbers $a<b$, then
\begin{equation} \label{eq: Euler Maclaurin summation formula}
    \begin{split}
    \sum_{a<j\leq b} f(j)
    &=
    \int_a^b f(x)\,dx
    +
    \int_a^b
    (x-\lfloor x\rfloor)f'(x)\,dx
  +
    (a-\lfloor a\rfloor)f(a)
    -
    (b-\lfloor b\rfloor)f(b).
    \end{split}
\end{equation}
We also use the shorthand notation
\begin{equation}
    x\wedge y:=\min\{x,y\},
    \qquad
    x\vee y:=\max\{x,y\}.
\end{equation}

We begin with the pure trace powers of the diagonal matrix
$\bfL_N(R)$ defined in \eqref{eq: def of bfL and bfM}.

\begin{lem} \label{lem: trace power bfL-n-R asymp}
Let $N=2n$ be a positive even integer. Suppose that
$r\in(0,\infty)$ and
\begin{equation}
    R
    =
    r-\frac{s}{\sqrt{2N}}
    +
    o\bigl(N^{-1/2}\bigr)
 \label{def of R scaling in lemma}
 \end{equation}
as $N\to\infty$, for some fixed $s\in\R$. Then, for every fixed
positive integer $m$,
\begin{equation} \label{eq: trace power bfL asymp}
    \Tr\big[\bfL_N(R)^m\big]
    =
    (r^2\wedge1)n
    +
    \alpha_{m,r,s}\sqrt n
    +
    o(\sqrt n),
\end{equation}
where
\begin{equation}
    \alpha_{m,r,s}
    :=
    \begin{cases}
    \displaystyle
    r\int_{-\infty}^{\infty}
    \bigg[
        \bigg(\frac{\erfc(x)}{2}\bigg)^m
        -
        \mathbbm{1}_{\{x<s\}}
    \bigg]\,dx,
    & 0<r<1,
    \\[3mm]
    \displaystyle
    \int_{-\infty}^{s}
    \bigg[
        \bigg(\frac{\erfc(x)}{2}\bigg)^m
        -
        1
    \bigg]\,dx,
    & r=1,
    \\[3mm]
    0,
    & 1<r<\infty.
    \end{cases}
\end{equation}
If $R\to\infty$, the same conclusion holds with
$(r^2\wedge1)n$ interpreted as $n$ and
$\alpha_{m,\infty,s}:=0$.
\end{lem}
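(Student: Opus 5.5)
Since $\bfL_N(R)$ is diagonal, we have
\[
\Tr[\bfL_N(R)^m]=\sum_{j=1}^n g_j^m, \qquad g_j:=\frac{\gamma(2j-1,NR^2)}{\Gamma(2j-1)},
\]
so the task is a one-dimensional sum asymptotic. We apply the uniform expansion \eqref{eq: asymp incomplete gamma} with $a=2j-1$ and $z=NR^2$, so that $\lambda=NR^2/(2j-1)$. The transition point $a\approx NR^2$ corresponds to $j_*\approx NR^2/2=nR^2$. Writing $2j-1=NR^2+\sqrt{2NR^2}\,\xi\cdot(\text{sign})$, we have $\lambda-1\approx -\xi\sqrt{2/(NR^2)}$. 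Hence $\eta\sqrt{a/2}\approx -\xi$, and
\[
g_j=\tfrac12\erfc(\xi)+O(N^{-1/2}e^{-c\xi^2})
\]
in the window $|\xi|\ll N^{1/6}$, where the remainder $F(a,\eta)$ is $O(a^{-1/2}e^{-a\eta^2/2})$. Away from the window, $g_j$ is exponentially close to $1$ for $j\le (1-\epsilon)nR^2$ and exponentially small for $j\ge(1+\epsilon)nR^2$. This follows from $\eta^2\gtrsim\epsilon^2$ together with the standard bounds.

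\textbf{Case $r<1$.} Write
\[
\sum_{j\le n} g_j^m=\sum_{j\le n}\mathbbm 1_{\{j\le nR^2\}}+\sum_{j\le n}\bigl(g_j^m-\mathbbm 1_{\{j\le nR^2\}}\bigr).
\]
The first sum equals $nR^2+O(1)=nr^2-rs\sqrt{2N}\cdot\tfrac{N}{2}\cdot\tfrac{2}{N}+o(\sqrt n)$. More carefully, $nR^2=nr^2-2rs\, n/\sqrt{2N}+o(\sqrt n)=nr^2-rs\sqrt n+o(\sqrt n)$, using $\sqrt{2N}=2\sqrt n$. The second sum is concentrated in the window. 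The tails contribute $O(1)$ by the exponential bounds, and the bulk truncation is valid because $j\le n$ lies well beyond $nR^2$ when $r<1$. In the window the step in $\xi$ is $\Delta\xi=2/\sqrt{2NR^2}=1/(R\sqrt n)$. By Euler--Maclaurin \eqref{eq: Euler Maclaurin summation formula} (or a Riemann-sum argument, using monotonicity of $\erfc^m$ and the indicator, and integrability of the difference), the second sum equals
\[
R\sqrt n\int_{\R}\Bigl[(\tfrac12\erfc\xi)^m-\mathbbm 1_{\{\xi<0\}}\Bigr]d\xi+o(\sqrt n).
\]
Since $R\to r$ and the $O(N^{-1/2}e^{-c\xi^2})$ correction sums to $O(1)$, this gives the second-order term. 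Finally, we re-center: shifting the indicator threshold from $nR^2$ to $nr^2$ converts $-rs\sqrt n+r\sqrt n\int[(\erfc/2)^m-\mathbbm 1_{\{\xi<0\}}]$ into $r\sqrt n\int[(\erfc/2)^m-\mathbbm 1_{\{x<s\}}]dx$, because $\int(\mathbbm 1_{\{x<0\}}-\mathbbm 1_{\{x<s\}})dx=-s$. This matches $\alpha_{m,r,s}$.

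\textbf{Case $r=1$.} Now $R=1-s/\sqrt{2N}$, so the transition index $nR^2=n-s\sqrt n+o(\sqrt n)$ sits within $O(\sqrt n)$ of the cutoff $j=n$. In $\xi$-variables, $j=n$ corresponds to $\xi\approx s$, because $2n-1-NR^2\approx 2s\sqrt n=\sqrt{2NR^2}\,s$. Thus
\[
\sum_{j\le n}g_j^m=n+\sum_{j\le n}(g_j^m-1),
\]
and the same Riemann-sum argument gives $\sqrt n\int_{-\infty}^{s}[(\erfc x/2)^m-1]dx+o(\sqrt n)$. The lower tail again converges because $g_j\to1$ exponentially.

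\textbf{Case $r>1$, including $R\to\infty$.} Every $j\le n$ satisfies $\lambda\ge R^2\cdot\frac{N}{2n}>1+\epsilon$ uniformly, so $1-g_j\le e^{-c(NR^2+j)}$-type bounds hold. It is cleanest to use $1-g_j=\Gamma(2j-1,NR^2)/\Gamma(2j-1)$ and a Chernoff bound on the Poisson tail. This gives $\sum(1-g_j^m)=O(1)$, so the sum is $n+o(\sqrt n)$.

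\textbf{Main obstacle.} The step I expect to be hardest is making the window approximation uniform: turning the heuristic $g_j\approx\frac12\erfc(\xi)$ into an error that sums to $o(\sqrt n)$. I would split at $|\xi|=N^{\delta}$ with small $\delta$. Inside the window, Taylor-expand $\eta\sqrt{a/2}=-\xi+O(\xi^2/\sqrt N)$ and use the Lipschitz bound $|\erfc'|\le 2/\sqrt\pi$; the total error is then $O(\sqrt n\cdot N^{3\delta}/\sqrt N)=o(\sqrt n)$. Outside the window, the exponential tail bounds control everything.
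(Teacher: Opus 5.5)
Your proposal is correct and follows essentially the same route as the paper: subtract the indicator $\mathbbm 1_{\{j\le nR^2\}}$ (or the constant $1$ when $r=1$), use the uniform expansion \eqref{eq: asymp incomplete gamma} to localise the remainder to a window of width slightly larger than $\sqrt n$ around $j\approx nR^2$, convert the window sum into $R\sqrt n\int\big(\tfrac12\erfc(\xi)\big)^m d\xi$ by a Riemann-sum/Euler--Maclaurin argument with step $1/(R\sqrt n)$, and re-centre using $nR^2=nr^2-rs\sqrt n+o(\sqrt n)$. The differences are cosmetic: the paper uses a $\sqrt n(\log n)^2$ window and bounds the Euler--Maclaurin derivative term by the monotonicity of $\gamma(x,NR^2)/\Gamma(x)$, whereas you approximate $g_j$ pointwise by $\tfrac12\erfc(\xi_j)$ on an $N^{\delta}$-window in $\xi$ and use a Poisson Chernoff bound for $r>1$.
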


\begin{proof}
  The case $R\to\infty$ follows immediately from
\eqref{eq: asymp incomplete gamma}, so we assume that
$r\in(0,\infty)$.
 
    Since $\bfL_N(R)$ is diagonal, we have
    \begin{align*}
    \begin{split}
        \Tr[\bfL_N(R)^m] &= \sum_{j=1}^n \Big( \frac{\gamma(2j-1, NR^2)}{ \Gamma(2j-1)} \Big)^m
        = \lfloor n R^2 \rfloor \wedge n + \sum_{j=1}^n \bigg[ \Big( \frac{\gamma(2j-1,N R^2)}{ \Gamma(2j-1)} \Big)^m - \mathbbm{1}_{\{j \leq n R^2\}} \bigg].
    \end{split}
    \end{align*}
    The summands in the second term are concentrated near $j=nR^2$. To isolate this transition region, define
    \begin{equation} \label{eq: def tau-n-pm}
        \tau_n^\pm:= \Big\lfloor n R^2 \pm \sqrt{n} (\log n)^2 \Big\rfloor  \wedge n.
    \end{equation} 
    The uniform asymptotic expansion \eqref{eq: asymp incomplete gamma} implies that, for some $c>0$,
\begin{equation*}
    \begin{split}
\bigg(
        \sum_{j=1}^{\tau_n^-}
        +
        \sum_{j=\tau_n^++1}^{n}
    \bigg)
    \bigg[
        \bigg(
            \frac{\gamma(2j-1,NR^2)}{\Gamma(2j-1)}
        \bigg)^m
        -
        \mathbbm{1}_{\{j\leq nR^2\}}
    \bigg]
=
    O\bigl(
        n e^{-c(\log n)^4}
    \bigr).
    \end{split}
\end{equation*}

We next apply the first-order Euler--Maclaurin summation formula to
the transition region. Since $\tau_n^-$ and $\tau_n^+$ are integers,
the endpoint correction terms vanish, and hence 
    \begin{align*}
        \sum_{j=\tau_n^-+1}^{\tau_n^+}  \Big( \frac{\gamma(2j-1,N R^2)}{ \Gamma(2j-1)} \Big)^m &= \int_{\tau_n^-}^{\tau_n^+} \Big( \frac{\gamma(2x-1,N R^2)}{ \Gamma(2x-1)} \Big)^m dx + \int_{\tau_n^-}^{\tau_n^+} (x - \lfloor x \rfloor) \frac{d}{dx}\bigg[ \Big( \frac{\gamma(2x-1,N R^2)}{ \Gamma(2x-1)} \Big)^m \bigg] dx. 
    \end{align*}

For the first integral, set
\begin{equation}
    y
    :=
    \frac{x-(NR^2+1)/2}{\sqrt{nR^2}},
    \qquad
    \widetilde{\tau}_n^\pm
    :=
    \frac{
        \tau_n^\pm-(NR^2+1)/2
    }{
        \sqrt{nR^2}
    }.
\end{equation}
It follows from \eqref{eq: asymp incomplete gamma} that 
    \begin{align}
        \int_{\tau_n^-}^{\tau_n^+} \Big( \frac{\gamma(2x-1,NR^2)}{ \Gamma(2x-1)} \Big)^m dx = \sqrt{n R^2}\int_{\widetilde{\tau}_n^-}^{\widetilde{\tau}_n^+} \Big( \frac{\erfc(y)}{2} \Big)^m dy + O(1). 
    \end{align} 

For the second integral, we have 
    \begin{align*}
        &\quad \Bigg|\int_{\tau_n^-}^{\tau_n^+} (x - \lfloor x \rfloor) \frac{d}{dx}\bigg[ \Big( \frac{\gamma(2x-1,NR^2)}{ \Gamma(2x-1)} \Big)^m \bigg] dx \Bigg|
        \\
        &\leq \int_{\tau_n^-}^{\tau_n^+} \Bigg| \frac{d}{dx}\bigg[ \Big( \frac{\gamma(2x-1,NR^2)}{ \Gamma(2x-1)} \Big)^m \bigg] \Bigg| dx 
        =   \Big( \frac{\gamma(2\tau_n^+-1,NR^2)}{ \Gamma(2\tau_n^+-1)} \Big)^m  -  \Big( \frac{\gamma(2\tau_n^--1,NR^2)}{ \Gamma(2\tau_n^--1)} \Big)^m  \leq 1.
    \end{align*}
Here, we have used the fact that the regularised incomplete gamma function $\gamma(x,NR^2)/\Gamma(x)$ is decreasing and takes values in $(0,1)$. 
Combining these estimates and absorbing the discrepancy between
$j\leq nR^2$ and $y<0$ into the $O(1)$ term, we obtain 
    \begin{equation}
        \Tr[\bfL_N(R)^m] = \lfloor n R^2 \rfloor \wedge n  + \sqrt{nR^2}\int_{\widetilde{\tau}_n^-}^{\widetilde{\tau}_n^+} \bigg[ \Big( \frac{\erfc(y)}{2} \Big)^m - \mathbbm{1}_{\{y<0\}} \bigg] dy + O(1).
    \end{equation}
    It remains to evaluate the limiting integration range. Under the
scaling \eqref{def of R scaling in lemma} we have
\begin{equation}
    \widetilde{\tau}_n^-\to-\infty,
    \qquad
    \widetilde{\tau}_n^+\to
    \begin{cases}
        +\infty, & r\in(0,1), \smallskip 
        \\
        s, & r=1.
    \end{cases}
\end{equation}
For $r>1$, both $\tau_n^-$ and $\tau_n^+$ equal $n$ for all
sufficiently large $n$, so the transition contribution vanishes.
Moreover,
\begin{equation}
    nR^2=nr^2-rs\sqrt n+o(\sqrt n).
\end{equation}
Substituting these asymptotics into the preceding expression and
replacing the integration endpoints by their respective limits yields
\eqref{eq: trace power bfL asymp}.
\end{proof}

\begin{lem} \label{lem: trace power bfM-n-R asymp}
    Under the assumptions of Lemma~\ref{lem: trace power bfL-n-R asymp}, we have as $N = 2n\to\infty$ that
    \begin{equation} \label{eq: trace power bfM asymp}
        \Tr [\bfM_N(R)^m] = (r \wedge 1) \sqrt{\frac{n}{\pi m}} + o(\sqrt{n}). 
    \end{equation}
\end{lem}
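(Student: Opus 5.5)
The plan is to show that, away from the transition index $j\approx nR^2$, the matrix $\bfM_N(R)$ behaves like a slowly varying discrete heat kernel in the index variable. The trace of its $m$-th power is then a sum of diagonal values of a Gaussian kernel with variance proportional to $m$.

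\textbf{Step 1: local Gaussian form of the entries.} I would start from Lemma~\ref{lem: element estimates bfM-n-R} and expand its exponent around the diagonal. Write $u=(x+y)/2$ and $d=x-y=2(j-k)/(NR^2)$. Then
\[
x\log x+y\log y-(x+y)\log\tfrac{x+y}{2}=\tfrac{d^2}{4u}+O\big(d^4/u^3\big).
\]
Since $NR^2u=j+k-1$, this gives, uniformly for $0<x,y\le 1-\epsilon$ with $|j-k|\le j^{1/2}(\log j)^2$,
\[
[\bfM_N(R)]_{j,k}=\frac{1}{\sqrt{2\pi\,\sigma_{j,k}^2}}\exp\Big[-\frac{(j-k)^2}{2\sigma_{j,k}^2}\Big]\big(1+o(1)\big),
\qquad \sigma_{j,k}^2:=j+k-1\approx 2j .
\]
Outside the window $|j-k|\le j^{1/2}(\log j)^2$, the convexity of the exponent gives a bound $\exp(-c(\log j)^4)$, which is negligible. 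All entries are nonnegative, and since $\gamma(a,z)\le\Gamma(a)$, every entry is dominated by the corresponding entry of $\bfM_N(\infty)$, i.e.\ $\Gamma(j+k-\frac32)/\sqrt{2\pi\Gamma(2j-1)\Gamma(2k-1)}$. The Gaussian bound applies to the latter for all $j,k$.

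\textbf{Step 2: powers in the bulk of indices.} Expand
\[
\Tr[\bfM_N(R)^m]=\sum_{j_1,\dots,j_m}\prod_{i}[\bfM_N(R)]_{j_i,j_{i+1}},\qquad j_{m+1}=j_1 .
\]
Fix the starting index $j_1=j$ with $n^{\delta}\le j\le(1-\epsilon)(nR^2\wedge n)$. All other $j_i$ lie within $O(\sqrt j(\log j)^2)$ of $j$ up to negligible error, so the variances are $2j(1+o(1))$ uniformly. Summing the Gaussian chain, via a Riemann sum or Euler--Maclaurin \eqref{eq: Euler Maclaurin summation formula} in the $m-1$ free indices on scale $\sqrt j$, yields the closed-loop value $(2\pi\cdot 2mj)^{-1/2}(1+o(1))$. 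Summing over $j$ gives
\[
\sum_{j\le J}\frac{1}{2\sqrt{\pi m j}}=\sqrt{\frac{J}{\pi m}}\,(1+o(1)),\qquad J=(1-\epsilon)\,n\,(r^2\wedge1).
\]
This produces $(r\wedge1)\sqrt{n/(\pi m)}$ after letting $\epsilon\to0$.

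\textbf{Step 3: the remaining indices (main obstacle).} I need to show that loops starting at $j<n^\delta$ or at $j\in[(1-\epsilon)nR^2\wedge n,\,n]$ contribute $O(\sqrt{\epsilon n})+o(\sqrt n)$. For the lower and transition ranges, I would use the entrywise domination by $\bfM_N(\infty)$, whose loops obey the same Gaussian bound $\lesssim j^{-1/2}$. The window $(1-\epsilon)nR^2\le j\le(1+\epsilon)nR^2$ then contributes $O(\epsilon\sqrt n)$, and $j<n^\delta$ contributes $O(n^{\delta/2})$. For $r>1$ this window does not occur below $n$, and the range $j\le n$ with $x\le 1/r^2+o(1)<1$ is fully covered by Step 2. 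For $j\ge(1+\epsilon)nR^2$, the factor $\gamma(j+k-\frac32,NR^2)/\Gamma(j+k-\frac32)$ is exponentially small by \eqref{eq: asymp incomplete gamma}. Since any loop through such an index contains an entry with $j_i+j_{i+1}$ large, these loops are negligible. Making the uniformity of the local Gaussian approximation precise along chains that wander near the transition is the delicate point. I would handle it by the domination argument rather than by precise asymptotics there, so that only the $\epsilon$-dependent bound is needed.
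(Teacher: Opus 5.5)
Your proposal is correct in outline, but it takes a different route from the paper.

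\textbf{The paper's route.} The paper never computes Gaussian loop sums. It imports, as a black box, the asymptotics $\Tr[(\bfM_K^{(\infty)})^m]=\sqrt{K/(\pi m)}+o(\sqrt K)$ from \cite[Lemma~2.3]{KPTTZ15}. Here $\bfM_K^{(\infty)}$ is the $K\times K$ matrix with entries $\Gamma(j+k-\frac32)/\sqrt{2\pi\Gamma(2j-1)\Gamma(2k-1)}$. The paper then uses only four facts: nonnegativity of the entries, $\gamma\le\Gamma$, off-diagonal localisation at scale $\sqrt n\log n$, and the incomplete-gamma transition at $nR^2\pm\sqrt n(\log n)^2$. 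With these it sandwiches $\Tr[\bfM_N(R)^m]$ between $(1-o(1))\Tr[(\bfM^{(\infty)}_{K_n^-})^m]$ and $\Tr[(\bfM^{(\infty)}_{K_n^+})^m]+o(1)$, where $K_n^\pm/n\to r^2\wedge1$. The transition window has width $o(n)$, so no $\epsilon$-window and no final limit $\epsilon\to0$ are needed.

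\textbf{Your route.} Your Steps 1--2 essentially reprove the KPTTZ lemma as a local limit theorem. You get $[\bfM^m]_{jj}\approx(4\pi m j)^{-1/2}$ because the variances $\sigma^2\approx2j$ add along a closed loop of length $m$. Your Step 3 replaces the sandwich by an $\epsilon$-window plus a domination bound.

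\textbf{Trade-offs.} Your version is self-contained and yields the stronger diagonal-entry asymptotics. It also explains where $1/\sqrt{\pi m}$ comes from. The price is extra work: uniformity of the Gaussian approximation along chains, Riemann-sum control, and an $\epsilon\to0$ limit.

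\textbf{Points to nail down.}
\begin{itemize}
\item You assert $[(\bfM^{(\infty)})^m]_{jj}\le C_m j^{-1/2}$ but do not prove it. For nonnegative matrices it follows from $[\bfM^m]_{jj}\le \max_k[\bfM]_{kj}\,\bigl(\max_i\sum_k[\bfM]_{ik}\bigr)^{m-1}$. You then need two inputs:
  \begin{itemize}
  \item $\max_k[\bfM^{(\infty)}]_{kj}\le Cj^{-1/2}$, which follows from log-convexity of $\Gamma$;
  \item uniformly bounded row sums, which follow from your global Gaussian bound.
  \end{itemize}
\item For $r>1$ (and $R\to\infty$), loops starting at $j\in[(1-\epsilon)n,n]$ are truncated by the matrix size. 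They are therefore not full Gaussian loops and must be handled by the domination bound, not by Step 2. The first sentence of your Step 3 does this, but your later remark that Step 2 ``fully covers'' $j\le n$ should be read only as a statement about the entry asymptotics.
\end{itemize}
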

\begin{proof} 
We first claim that
\begin{equation} \label{eq: trace power bfM-n-R diagonal concent}
    \Tr[\bfM_N(R)^m]
    =
    \frac{1}{(2\pi)^{m/2}}
    \sum_{j_1=1}^n
    \sum_{j_2,\ldots,j_m}
    \prod_{\ell=1}^m
    \frac{
        \gamma(j_\ell+j_{\ell+1}-\frac32,NR^2)
    }{
        \sqrt{
            \Gamma(2j_\ell-1)
            \Gamma(2j_{\ell+1}-1)
        }
    }
    +
    O(e^{-c(\log n)^2}),
\end{equation}
where the inner sum runs over all integers
$j_2,\ldots,j_m$ satisfying
\begin{equation}
    \vartheta_n^-(j_1)+1
    \leq
    j_\ell
    \leq
    \vartheta_n^+(j_1),
    \qquad
    \ell=2,\ldots,m,
\end{equation}
with
\begin{equation} \label{eq: def vartheta-n-pm}
    \vartheta_n^-(j)
    :=
    0\vee\lfloor j-\sqrt n\log n\rfloor,
    \qquad
    \vartheta_n^+(j)
    :=
    \lfloor j+\sqrt n\log n\rfloor\wedge n.
\end{equation}
Here, we adopt the cyclic convention $j_{m+1}\equiv j_1$.

From the definition \eqref{eq: def of bfL and bfM}, we have
    \begin{align} \label{eq: trace power bfM large summation}
        \Tr[\bfM_N(R)^m] = \frac{1}{(2\pi)^{m/2}} \sum_{j_1=1}^n \cdots \sum_{j_m=1}^n \prod_{\ell=1}^m \frac{\gamma(j_\ell+ j_{\ell+1}-\frac{3}{2}, N R^2)}{\sqrt{ \Gamma(2j_\ell-1) \Gamma(2j_{\ell+1}-1)}}. 
    \end{align}
To prove \eqref{eq: trace power bfM-n-R diagonal concent}, it suffices
to show that the contribution of those index tuples excluded from the
restricted summation is exponentially small.

We first establish a uniform bound for the gamma ratio. For
$x,y\geq1/2$ and $\rho>0$, we have
\begin{equation} \label{eq: trace power bfM summand bounds}
    0
    <
    \frac{
        \gamma(x+y-\tfrac12,\rho)
    }{
        \sqrt{\Gamma(2x)\Gamma(2y)}
    }
    \leq
    \frac{
        \Gamma(x+y-\tfrac12)
    }{
        \sqrt{\Gamma(2x)\Gamma(2y)}
    }
    \leq
    \sqrt{\pi}, 
\end{equation}
where the last inequality follows from the monotonicity of
$\Gamma(z-\tfrac12)/\Gamma(z)$ for $z\geq1$ and the log-convexity
of the gamma function.
Moreover, Stirling's approximation \eqref{eq for Stirling} gives
\begin{equation*}
    \frac{
        \Gamma(x+y-\tfrac12)
    }{
        \sqrt{\Gamma(2x)\Gamma(2y)}
    }
    =
    \exp\bigg[
        (x+y)\log(x+y)
        -
        x\log(2x)
        -
        y\log(2y)
        +
        O(\log(x+y))
    \bigg],
\end{equation*}
uniformly for $x,y\geq1/2$.

    Let us assume $1/2 \leq x,y\leq n$ while $|x-y| \geq \frac{1}{m}\sqrt{n} \log n$.
    Then, using convexity of $f(x) = x \log x$, it follows from the Taylor expansion that
    \begin{equation}
        (x+y)\log(x+y) - x \log(2x) - y \log(2y) \leq -2c (\log n)^2
    \end{equation}
    for some $m$-dependent constant $c>0$.
    Therefore,
    \begin{equation} \label{eq: bound on ratio of gamma function}
        \frac{\Gamma(x+y-\frac{1}{2})}{\sqrt{\Gamma(2x) \Gamma(2y)}} \leq \exp\Big[ - 2c (\log n)^2 + O(\log n) \Big]
    \end{equation}
    for some $m$-dependent constant $c>0$ with the error term being uniform.
    
We now return to the summation in
\eqref{eq: trace power bfM large summation}. Suppose that an index
tuple $(j_1,\ldots,j_m)\in\{1,\ldots,n\}^m$ contains some $j_k$
outside the interval
\begin{equation*}
    [\vartheta_n^-(j_1)+1,\vartheta_n^+(j_1)].
\end{equation*}
Then
\begin{equation*}
    |j_k-j_1|
    \geq
    \sqrt n\log n-O(1).
\end{equation*}
Hence, by the triangle inequality and the cyclic convention, for all
sufficiently large $n$, there exists
$\ell\in\{1,\ldots,m\}$ such that
\begin{equation*}
    |j_\ell-j_{\ell+1}|
    >
    \frac{1}{2m}\sqrt n\log n.
\end{equation*}
It follows from \eqref{eq: trace power bfM summand bounds} and
\eqref{eq: bound on ratio of gamma function} that
\begin{equation*}
    \prod_{\ell=1}^m
    \frac{
        \gamma(j_\ell+j_{\ell+1}-\frac32,NR^2)
    }{
        \sqrt{
            \Gamma(2j_\ell-1)
            \Gamma(2j_{\ell+1}-1)
        }
    }
    \leq
    \exp\Big[
        -2c(\log n)^2+O(\log n)
    \Big]
\end{equation*}
for some constant $c>0$ depending only on $m$. Since the number of
such index tuples is at most $n^m$, their total contribution is bounded
by
\begin{equation*}
    n^m
    \exp\Big[
        -2c(\log n)^2+O(\log n)
    \Big]
    =
    O\bigl(e^{-c'(\log n)^2}\bigr)
\end{equation*}
for some $c'>0$. This proves
\eqref{eq: trace power bfM-n-R diagonal concent}.

\medskip

   We now derive the desired asymptotic formula from
\eqref{eq: trace power bfM-n-R diagonal concent}.
For a positive integer $K$, define the $K\times K$ matrix
\begin{equation} \label{def of MK infty}
    \bfM_K^{(\infty)}
    :=
    \bigg[
        \frac{
            \Gamma(j+k-\frac32)
        }{
            \sqrt{
                2\pi
                \Gamma(2j-1)
                \Gamma(2k-1)
            }
        }
    \bigg]_{j,k=1}^K.
\end{equation}
We use the following result from
\cite[Lemma~2.3]{KPTTZ15}: for every fixed positive integer $m$,
\begin{equation} \label{KPTTZ15 formula}
    \Tr\big[
        \big(\bfM_K^{(\infty)}\big)^m
    \big]
    =
    \sqrt{\frac{K}{\pi m}}
    +
    o(\sqrt K)
\end{equation}
as $K\to\infty$.

Recall $\tau_n^\pm$ and $\vartheta_n^\pm$ from
\eqref{eq: def tau-n-pm} and
\eqref{eq: def vartheta-n-pm}, respectively, and define
\begin{equation} \label{def of Kn pm}
    K_n^-:=\tau_n^-,
    \qquad
    K_n^+
    :=
    \big(
        \tau_n^+
        +
        \lceil\sqrt n\log n\rceil
    \big)
    \wedge n.
\end{equation}
We first derive an upper bound. By
\eqref{eq: trace power bfM-n-R diagonal concent}, the indices in the
restricted summation satisfy
\begin{equation*}
    |j_\ell-j_1|
    \leq
    \sqrt n\log n+O(1),
    \qquad
    \ell=2,\ldots,m.
\end{equation*}
If $j_1>\tau_n^+$, then, unless $\tau_n^+=n$, the corresponding shape
parameters satisfy
\begin{equation*}
    j_\ell+j_{\ell+1}-\frac32
    \geq
    NR^2+c\sqrt n(\log n)^2
\end{equation*}
for all sufficiently large $n$ and some $c>0$. It follows from
\eqref{eq: asymp incomplete gamma} and
\eqref{eq: trace power bfM summand bounds} that the total contribution
of these terms is $O(e^{-c(\log n)^2})$.

For the remaining terms, $j_1\leq\tau_n^+$ implies
$j_\ell\leq K_n^+$ for every $\ell$. Since
$\gamma(a,z)\leq\Gamma(a)$, enlarging the summation range gives
\begin{equation*}
    \Tr\big[\bfM_N(R)^m\big]
    \leq
    \Tr\big[
        \big(\bfM_{K_n^+}^{(\infty)}\big)^m
    \big]
    +
    O(e^{-c(\log n)^2}).
\end{equation*}

For the lower bound, observe that, uniformly for
$j,k\leq K_n^-$,
\begin{equation*}
    j+k-\frac32
    \leq
    NR^2-c\sqrt n(\log n)^2
\end{equation*}
for some $c>0$. Hence,
\eqref{eq: asymp incomplete gamma} gives
\begin{equation*}
    \gamma\Big(j+k-\frac32,NR^2\Big)
    =
    \Gamma\Big(j+k-\frac32\Big)
    \Big(
        1+O(e^{-c(\log n)^2})
    \Big)
\end{equation*}
uniformly in this range. Restricting all indices in the trace
expansion to $\{1,\ldots,K_n^-\}$ therefore yields
\begin{equation*}
    \Tr\big[\bfM_N(R)^m\big]
    \geq
    \Big(
        1-O(e^{-c(\log n)^2})
    \Big)
    \Tr\big[
        \big(\bfM_{K_n^-}^{(\infty)}\big)^m
    \big].
\end{equation*}

Note that by \eqref{def of R scaling in lemma} and \eqref{def of Kn pm}, we have
\begin{equation*}
    \frac{K_n^\pm}{n}
    \longrightarrow
    r^2\wedge1,
    \qquad
    \sqrt{K_n^\pm}
    =
    (r\wedge1)\sqrt n
    +
    o(\sqrt n).
\end{equation*}
Applying \eqref{KPTTZ15 formula} to $K_n^-$ and $K_n^+$, we obtain
\begin{equation*}
    \Tr\big[
        \big(\bfM_{K_n^\pm}^{(\infty)}\big)^m
    \big]
    =
    (r\wedge1)
    \sqrt{\frac{n}{\pi m}}
    +
    o(\sqrt n).
\end{equation*}
The conclusion now follows from the preceding upper and lower bounds. 
\end{proof}

\subsection{Mixed trace powers} \label{sec: mixed trace powers}

We now turn to the mixed trace products appearing in
\eqref{def of mixed trace powers}. The aim of this subsection is to
show that, at the scale relevant to the joint cumulants, their
asymptotic behaviour is entirely determined by the pure trace powers
of $\bfM_N(R)$ and $\bfL_N(R)$ studied in the previous subsection.
More precisely, mixed products involving only $\bfM_N(R)$ and
$\bfL_N(R)$ reduce, to leading order, to the corresponding pure trace
powers of $\bfM_N(R)$, whereas any product containing at least one
factor of either $\bfE_{N,1}(R)$ or $\bfE_{N,2}(R)$ is negligible.
These two statements are established in Lemmas~\ref{lem: trace power mixed products only bfM bfL} and ~\ref{lem: trace power mixed products including bfE}. 

\begin{lem} \label{lem: trace power mixed products only bfM bfL}
  Under the assumptions of Lemma~\ref{lem: trace power bfL-n-R asymp}, let $p$ be a fixed positive integer, and let $a_\nu,b_\nu$ be fixed nonnegative integers for $\nu=1,\ldots,p$, such that $a_1 + \cdots +a_p \geq 1$.
    Then, we have as $N = 2n\to\infty$ that
    \begin{align}
        \Tr\bigg[\prod_{\nu=1}^p\bfM_N(R)^{a_\nu} \bfL_N(R)^{b_\nu} \bigg] = \Tr\big[ \bfM_N(R)^{a_1 + \cdots +a_p} \big] + o(\sqrt{n}).
    \end{align}
Here, $\prod_{\nu=1}^p\bfQ_\nu$ denotes the ordered matrix product
$\bfQ_1\bfQ_2\cdots\bfQ_p$.
\end{lem}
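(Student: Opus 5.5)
The plan is to show that inserting factors of $\bfL_N(R)$ into a product of $\bfM_N(R)$'s changes the trace only by $o(\sqrt n)$. The guiding observation is that the relevant index ranges essentially never see a diagonal entry of $\bfL_N(R)$ that differs from $1$.

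Write $\bfL:=\bfL_N(R)$, $\bfM:=\bfM_N(R)$ and $a:=a_1+\cdots+a_p\geq1$. By cyclicity of the trace, I may assume the product starts with an $\bfM$ factor. Expanding the trace, each summand is a cyclic product of $a$ entries of $\bfM$, say $[\bfM]_{j_1,j_2}\cdots[\bfM]_{j_a,j_1}$, multiplied by diagonal weights
\[
\prod_{\ell} \Big(\frac{\gamma(2j_\ell-1,NR^2)}{\Gamma(2j_\ell-1)}\Big)^{\beta_\ell}
\]
coming from the $\bfL$ factors placed between consecutive $\bfM$ factors. Here $\beta_\ell$ is the total exponent of $\bfL$ sitting at index $j_\ell$. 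Since $\bfL$ is diagonal with entries in $(0,1)$, every weight lies in $[0,1]$. Writing each weight as $1-(1-w)$ and expanding, the difference between the mixed trace and $\Tr[\bfM^{a}]$ is bounded by a finite sum of terms of the form
\[
\sum_{j_1,\ldots,j_a}\prod_{\ell=1}^{a}[\bfM]_{j_\ell,j_{\ell+1}}\,\bigl(1-w(j_i)\bigr),
\]
with the index $i$ fixed. This is legitimate because all entries of $\bfM$ are positive, so I may bound the differences of products termwise by monotonicity.

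To show each such term is $o(\sqrt n)$, I follow the proof of Lemma~\ref{lem: trace power bfM-n-R asymp}. First, I restrict to tuples with $|j_\ell-j_1|\le\sqrt n\log n$, at a cost of $O(e^{-c(\log n)^2})$. Next, the factor $1-w(j_i)$ is $O(e^{-c(\log n)^4})$ unless $j_i\ge\tau_n^-$, by the uniform expansion \eqref{eq: asymp incomplete gamma}. At the other end, tuples with $j_1>\tau_n^+$ are exponentially small by the same argument as in the upper bound of Lemma~\ref{lem: trace power bfM-n-R asymp}; this matters only when $\tau_n^+<n$. It therefore remains to bound the contribution of the tuples whose indices all lie in the window $[\tau_n^- - \sqrt n\log n,\ \tau_n^+ + \sqrt n\log n]$, a window of length $O(\sqrt n(\log n)^2)$. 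For these I use $\gamma\le\Gamma$ and the entrywise Gaussian-type decay from Lemma~\ref{lem: element estimates bfM-n-R} and Stirling. Together these give $[\bfM^{(\infty)}]_{j,k}\approx \frac{1}{\sqrt{2\pi}}(2j)^{-1/2}\exp(-(j-k)^2/(4j))$, up to constants. Consequently the row sums of $\bfM^{(\infty)}$ are $O(1)$, while each diagonal entry of $\bfM^{a}$ near $j\sim n$ is $O(n^{-1/2})$. Summing over a window of length $O(\sqrt n(\log n)^2)$ then bounds the term by $O((\log n)^2)=o(\sqrt n)$. This covers every regime $r<1$, $r=1$, $r>1$. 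When $r>1$ the window is essentially empty, because $1-w(j)$ is tiny for all $j\le n$. The case $R\to\infty$ is immediate, since then $\bfL=\mathrm{Id}+O(e^{-cn})$.

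The main obstacle is the bound on the diagonal entries of $\bfM^{a}$. I need $[\bfM^{a}]_{j,j}=O(j^{-1/2})$ uniformly for $j$ of order $n$. I would prove it by writing $\bfM\le\bfM^{(\infty)}$ entrywise and viewing $\bfM^{(\infty)}$ locally as a Gaussian convolution kernel of width $\sqrt j$. The $a$-fold convolution is again Gaussian with width $\sqrt{aj}$, so its diagonal value is $\asymp (\pi a j)^{-1/2}$; this is consistent with \eqref{KPTTZ15 formula}. Making this rigorous requires controlling the slowly varying width $\sqrt{j}$ over the range $|j_\ell-j|\le\sqrt n\log n$. There the ratio $j_\ell/j=1+o(1)$, so Lemma~\ref{lem: element estimates bfM-n-R}, after a second-order Taylor expansion of the exponent, gives the needed uniform Gaussian upper bound.
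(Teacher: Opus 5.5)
Your argument is correct, but it takes a different route from the paper.

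\textbf{How the paper argues.} The paper never estimates the difference directly; it uses a sandwich. The upper bound $\Tr[\prod_\nu \bfM_N(R)^{a_\nu}\bfL_N(R)^{b_\nu}]\le\Tr[\bfM_N(R)^{a}]$ comes from the same termwise monotonicity you use. For the lower bound, the paper restricts all indices to $\{1,\ldots,K_n^-\}$. There $\bfL_N(R)=\mathrm{Id}+O(e^{-c(\log n)^2})$, and $\bfM_N(R)$ agrees entrywise with $\bfM^{(\infty)}_{K_n^-}$ up to a factor $1+O(e^{-c(\log n)^2})$. It then invokes \eqref{KPTTZ15 formula} together with Lemma~\ref{lem: trace power bfM-n-R asymp}: both $\Tr[(\bfM^{(\infty)}_{K_n^-})^a]$ and $\Tr[\bfM_N(R)^a]$ equal $(r\wedge1)\sqrt{n/(\pi a)}+o(\sqrt n)$. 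So the two bounds agree to $o(\sqrt n)$ only because both leading asymptotics are known and happen to match.

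\textbf{How you argue.} You bound the nonnegative difference by finitely many sums carrying a factor $1-w(j_i)$, and then localise:
\begin{itemize}
\item $\mathrm{Id}-\bfL_N(R)$ kills indices below $\tau_n^-$;
\item the incomplete gamma function in $\bfM_N(R)$ kills indices above $\tau_n^+$;
\item the off-diagonal decay of $\bfM_N(R)$ ties all indices together.
\end{itemize}
This is essentially the mechanism of Lemma~\ref{lem: trace power mixed products including bfE}, with $\mathrm{Id}-\bfL_N(R)$ playing the role of the $\mathbf{E}$-factors.

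\textbf{Comparison.} The paper's route is shorter, since Lemma~\ref{lem: trace power bfM-n-R asymp} and \eqref{KPTTZ15 formula} are already in hand. Yours is self-contained and never needs the leading constant. It also gives a quantitative error $O((\log n)^{C})$ instead of the unspecified $o(\sqrt n)$ inherited from \eqref{KPTTZ15 formula}.

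\textbf{Your ``main obstacle'' is not needed.} Once every index lies in a window of length $O(\sqrt n(\log n)^2)$ around $nR^2\asymp n$ (for $r\in(0,1]$), log-convexity gives $0\le[\bfM_N(R)]_{j,k}\le\Gamma(j+k-\frac32)/(\sqrt{2\pi}\,\Gamma(j+k-1))=O(n^{-1/2})$ there. Counting the $O\bigl((\sqrt n(\log n)^2)^{a}\bigr)$ admissible tuples then bounds each term by $O((\log n)^{2a})=o(\sqrt n)$, exactly as in the proof of Lemma~\ref{lem: trace power mixed products including bfE}. No Gaussian-convolution estimate for the diagonal of $\bfM_N(R)^{a}$, and no row-sum bound, is required.

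\textbf{A harmless slip.} Since $j_i\ge\tau_n^-$ only pins $j_1$ to within $\sqrt n\log n$, the window should be $[\tau_n^- - 2\sqrt n\log n,\ \tau_n^+ + \sqrt n\log n]$. This does not affect the bound.
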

\begin{proof}
Since $\bfM_N(R)$ has nonnegative entries and $\bfL_N(R)$ is
diagonal with diagonal entries in $[0,1]$, expanding the trace as a
sum over indices gives the termwise bound
\begin{equation} \label{mixed LM upper}
    \Tr\bigg[
        \prod_{\nu=1}^p
        \bfM_N(R)^{a_\nu}\bfL_N(R)^{b_\nu}
    \bigg]
    \leq
    \Tr\big[
        \bfM_N(R)^{a_1+\cdots+a_p}
    \big]. 
\end{equation}
Thus, it remains to establish the corresponding lower bound up to an
$o(\sqrt n)$ error.

 For the lower bound, recall $K_n^-$ from \eqref{def of Kn pm}.
As in the lower-bound argument in the proof of
Lemma~\ref{lem: trace power bfM-n-R asymp}, uniformly for
$1\leq j,k\leq K_n^-$, we have  
\begin{equation*}
    [\bfM_N(R)]_{j,k}
    =
    [\bfM_{K_n^-}^{(\infty)}]_{j,k}
    \Big(
        1+O(e^{-c(\log n)^2})
    \Big),
\end{equation*}
where $\bfM_K^{(\infty)}$ is defined in \eqref{def of MK infty}.
Similarly,
\begin{equation*}
    [\bfL_N(R)]_{j,j}
    =
    1+O(e^{-c(\log n)^2}).
\end{equation*}
Since all matrix entries involved are nonnegative, restricting all
summation indices in the trace expansion to
$\{1,\ldots,K_n^-\}$ gives
\begin{equation*}
    \begin{split}
    \Tr\bigg[
        \prod_{\nu=1}^p
        \bfM_N(R)^{a_\nu}\bfL_N(R)^{b_\nu}
    \bigg] \geq
    \Big(
        1-O(e^{-c(\log n)^2})
    \Big)
    \Tr\bigg[
        \big(
            \bfM_{K_n^-}^{(\infty)}
        \big)^{a_1+\cdots+a_p}
    \bigg].
    \end{split}
\end{equation*}
By \eqref{KPTTZ15 formula} and \eqref{def of Kn pm}, we have
\begin{equation*}
    \Tr\bigg[
        \big(
            \bfM_{K_n^-}^{(\infty)}
        \big)^{a_1+\cdots+a_p}
    \bigg]
    =
    (r\wedge1)
    \sqrt{
        \frac{n}{
            \pi(a_1+\cdots+a_p)
        }
    }
    +
    o(\sqrt n).
\end{equation*}
On the other hand, Lemma~\ref{lem: trace power bfM-n-R asymp} gives
\begin{equation*}
    \Tr\big[
        \bfM_N(R)^{a_1+\cdots+a_p}
    \big]
    =
    (r\wedge1)
    \sqrt{
        \frac{n}{
            \pi(a_1+\cdots+a_p)
        }
    }
    +
    o(\sqrt n).
\end{equation*}
Consequently,
\begin{equation*}
    \Tr\bigg[
        \big(
            \bfM_{K_n^-}^{(\infty)}
        \big)^{a_1+\cdots+a_p}
    \bigg]
    =
    \Tr\big[
        \bfM_N(R)^{a_1+\cdots+a_p}
    \big]
    +
    o(\sqrt n).
\end{equation*}
Therefore, we obtain
\begin{equation} \label{mixed LM lower}
    \Tr\bigg[
        \prod_{\nu=1}^p
        \bfM_N(R)^{a_\nu}\bfL_N(R)^{b_\nu}
    \bigg]
    \geq
    \Tr\big[
        \bfM_N(R)^{a_1+\cdots+a_p}
    \big]
    +
    o(\sqrt n).
\end{equation}
Combining \eqref{mixed LM upper} and \eqref{mixed LM lower}, the lemma follows. 
\end{proof}

Next, we show that any mixed trace power involving
$\bfE_{N,1}(R)$ or $\bfE_{N,2}(R)$ is negligible at the
leading order.

\begin{lem} \label{lem: trace power mixed products including bfE}
Under the assumptions of
Lemma~\ref{lem: trace power bfL-n-R asymp}, let $p$ be a fixed
positive integer, and let $a_\nu,b_\nu,c_\nu,d_\nu$ be fixed
nonnegative integers for $\nu=1,\ldots,p$. Suppose that
\begin{equation}
    \sum_{\nu=1}^p(a_\nu+b_\nu)\geq1.
\end{equation}
Then, as $N=2n\to\infty$, we have
\begin{equation}
    \Tr\bigg[
        \prod_{\nu=1}^p
        \bfE_{N,1}(R)^{a_\nu}
        \bfE_{N,2}(R)^{b_\nu}
        \bfM_N(R)^{c_\nu}
        \bfL_N(R)^{d_\nu}
    \bigg]
    =
    o(\sqrt n).
\end{equation}
\end{lem}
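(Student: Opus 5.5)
We reduce the claim to a norm estimate. For any $n\times n$ matrices, $|\Tr(\bfQ_1\cdots\bfQ_q)|\le \|\bfQ_1\|_{1}\prod_{i\ge2}\|\bfQ_i\|_{\mathrm{op}}$, where $\|\cdot\|_1$ is the trace norm. We also have $\|\bfQ\|_1\le \sqrt n\,\|\bfQ\|_{\mathrm{HS}}$ and $\|\bfQ\|_{\mathrm{op}}\le\|\bfQ\|_{\mathrm{HS}}$. By cyclicity of the trace we may place one factor $\bfE_{N,i}(R)$ first. It therefore suffices to show three things:
\begin{itemize}
\item $\|\bfE_{N,i}(R)\|_{\mathrm{HS}}=o(1)$ for $i=1,2$;
\item $\|\bfL_N(R)\|_{\mathrm{op}}\le1$;
\item $\|\bfM_N(R)\|_{\mathrm{op}}=O(1)$.
\end{itemize}
The trace is then bounded by $\sqrt n\cdot o(1)\cdot O(1)=o(\sqrt n)$. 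We may even allow a weaker bound $\|\bfE_{N,i}\|_{\mathrm{HS}}=O(n^{-\delta})$ provided the other factors stay bounded. If the $\sqrt n$ loss is too costly, we instead use $\|\bfE\|_1$ directly, via a rank or decay bound.

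For $\bfL_N$ the bound is immediate, since it is diagonal with entries in $[0,1]$. For $\bfM_N$, use $\gamma\le\Gamma$ entrywise together with nonnegativity: $\|\bfM_N(R)\|_{\mathrm{op}}\le\|\bfM_n^{(\infty)}\|_{\mathrm{op}}$, where the comparison follows from Perron--Frobenius monotonicity. We then apply Schur's test with weights $w_j=j^{-1/4}$ (or constant weights). The kernel $\Gamma(j+k-\frac32)/\sqrt{\Gamma(2j-1)\Gamma(2k-1)}$ behaves like $j^{-1/2}\exp(-(j-k)^2/(4j))$ up to constants, by the Stirling computation in Lemma~\ref{lem: element estimates bfM-n-R}. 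Summing over $k$ then gives a row sum of order $j^{-1/2}\sqrt{j}=O(1)$, uniformly in $j$. Hence $\|\bfM_N\|_{\mathrm{op}}=O(1)$. This is consistent with the known fact that $\bfM^{(\infty)}$ has spectrum in $[0,1]$.

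For the error matrices we use Lemmas~\ref{lem: element estimates bfE-n-R-1} and \ref{lem: element estimates bfE-n-R-2}.

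\emph{The matrix $\bfE_{N,2}$.} The exponent $x\log x-x+1\ge0$ vanishes only at $x=1$, with quadratic behaviour there. So $|[\bfE_{N,2}]_{j,k}|$ is bounded by $\frac{(jk)^{1/4}}{N}e^{-c(2j-NR^2)^2/N}e^{-c(2k-NR^2)^2/N}$, and it is tiny when $NR^2\gg n$. This is a product of two Gaussian profiles of width $\sqrt N$, so
\[
\|\bfE_{N,2}\|_{\mathrm{HS}}^2\lesssim \Big(\sum_j \frac{j^{1/2}}{N}e^{-2c(2j-NR^2)^2/N}\Big)^2\lesssim\Big(\frac{\sqrt N\sqrt N}{N}\Big)^2=O(1).
\]
This is only bounded, not $o(1)$. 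To fix it we use a better argument: $\bfE_{N,2}$ is entrywise dominated by a rank-one matrix $vv^T$ with $\|v\|^2=O(1)$. A product containing $\bfE_{N,2}$ is then bounded by $|\Tr(vv^T\,\cdot)|\le\|v\|^2\prod\|\cdot\|_{\mathrm{op}}=O(1)$, where all other factors are handled through entrywise absolute values. This gives $O(1)=o(\sqrt n)$. The absolute-value comparison is legitimate if we bound the trace of the product of entrywise absolute values, $|\bfM|=\bfM$ and $|\bfL|=\bfL$, and then check that $\|\,|\bfE_{N,1}|\,\|_{\mathrm{op}}$ is also bounded.

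\emph{The matrix $\bfE_{N,1}$.} The bound is $(k/j)^{1/4}N^{-1/2}e^{-\frac{NR^2}{2}(y\log y-y+1)}$, which factorises as $u_jv_k$ with $u_j=j^{-1/4}N^{-1/2}$ and $v_k=k^{1/4}e^{-c(2k-NR^2)^2/N}$. Then $\|u\|^2\lesssim N^{-1}\sqrt n$ and $\|v\|^2\lesssim N^{1/2}\sqrt N$, so $\|u\|\|v\|=O(1)$. Hence any trace containing $|\bfE_{N,1}|$ is at most $\|u\|\|v\|$ times a product of bounded operator norms, which is $O(1)$.

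The main obstacle is therefore the uniform operator-norm bound for $\bfM_N$ (Schur's test with Stirling asymptotics), together with correctly handling the regimes $R\to r>1$ and the edge, where the Gaussian profiles sit near $k=n$ and are truncated. These regimes only decrease the sums.
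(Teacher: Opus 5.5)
Your handling of $\bfL_N$, $\bfM_N$ and $\bfE_{N,2}$ is sound. The domination $|\bfE_{N,2}|\le C\,vv^T$ with $\|v\|^2=O(1)$ is correct, and $\|\bfM_N(R)\|_{\mathrm{op}}\le\|\bfM_n^{(\infty)}\|_{\mathrm{op}}=O(1)$. The step for $\bfE_{N,1}$, however, fails.

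\textbf{The operator norm of $\bfE_{N,1}$ is not bounded.} With your own vectors,
\[
\|u\|^2\,\|v\|^2\lesssim N^{-1}\sqrt n\cdot N\asymp\sqrt n,
\]
so $\|u\|\,\|v\|\asymp n^{1/4}$, not $O(1)$. This is not an artefact of a lossy bound, because $\bfE_{N,1}(R)=ab^T$ is exactly rank one:
\begin{itemize}
\item $a_j=2^{j-1/2}\gamma(j-\tfrac12,\tfrac{NR^2}{2})/\sqrt{\Gamma(2j-1)}\approx\sqrt2\,\pi^{1/4}j^{-1/4}$ for $j<nR^2$, by the duplication formula;
\item $\|b\|^2=\frac{1}{2\pi}\sum_k e^{-NR^2}(NR^2)^{2k-2}/(2k-2)!\approx\frac{1}{4\pi}$.
\end{itemize}
Hence $\|\bfE_{N,1}(R)\|_{\mathrm{op}}=\|a\|\,\|b\|\asymp n^{1/4}$ in both the bulk and edge regimes.

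A word with a single $\bfE_{N,1}$ is still fine under your scheme, giving $O(n^{1/4})$. A word with $\ell$ factors of $\bfE_{N,1}$, however, only gets $O(n^{\ell/4})$, which is not $o(\sqrt n)$ once $\ell\ge2$. Such words genuinely occur. For example, $[t^2]\Tr[\bfA_N(t;R)^2]$ contains $\Tr[\bfE_{N,1}(R)^2]$, which enters $\kappa_{2,0}$. The lemma as stated must cover them.

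\textbf{The missing ingredient is localisation.} The true value $\Tr[\bfE_{N,1}^2]=(b^Ta)^2$ is $O(1)$. This is because $b$ is concentrated in a window of width $O(\sqrt n)$ around $nR^2$, where $a_j=O(n^{-1/4})$. The long $j^{-1/4}$ tail of $a$, which is what inflates the operator norm, is never seen inside a trace. The Gaussian off-diagonal decay of $\bfM_N$ on scale $\sqrt n$ keeps every index of the word within $O(\sqrt n\log n)$ of $nR^2$.

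This is exactly the paper's argument:
\begin{itemize}
\item cycle an $\bfE$ factor to the last position;
\item use its column localisation to confine $j_1$ to the window;
\item propagate through the $\bfM$ factors via off-diagonal decay;
\item bound the restricted sum by $|\mathcal I_n|^q\,O(n^{-q/2})=O((\log n)^q)$, using entrywise $O(n^{-1/2})$ bounds on the window.
\end{itemize}

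Your norm method could be rescued only by building this localisation in. For instance, conjugate every factor by $D=\mathrm{diag}(e^{|j-nR^2|/\sqrt n})$. This makes $\|D^{-1}a\|$ and $\|Db\|$ both $O(1)$, while $D^{-1}\bfM_N D$ and $D^{-1}|\bfE_{N,2}|D$ stay bounded in operator norm.
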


\begin{proof}
The main idea of the proof is a localisation argument. The presence of a factor of either $\bfE_{N,1}(R)$ or $\bfE_{N,2}(R)$ forces the associated indices to lie near the transition point $nR^2$, while the off-diagonal decay of $\bfM_N(R)$ propagates this localisation through the remaining factors in the matrix product. Once all indices are restricted to a window of size $O(\sqrt n\log n)$ around $nR^2$, uniform $O(n^{-1/2})$ bounds on the relevant matrix entries yield the desired $o(\sqrt n)$ estimate.

Set
\begin{equation*}
    q
    :=
    \sum_{\nu=1}^p
    (a_\nu+b_\nu+c_\nu).
\end{equation*}
By assumption, $q\geq1$. After expanding the matrix powers and using
the fact that $\bfL_N(R)$ is diagonal with diagonal entries in
$[0,1]$, the absolute value of the trace is bounded by an index sum
associated with $q$ ordered factors
\begin{equation*}
    \bfQ_1,\ldots,\bfQ_q
    \in
    \{
        \bfE_{N,1}(R),
        \bfE_{N,2}(R),
        \bfM_N(R)
    \}.
\end{equation*}
More precisely, with the cyclic convention $j_{q+1}\equiv j_1$,
\begin{equation*}
    \bigg|
        \Tr\bigg[
            \prod_{\nu=1}^p
            \bfE_{N,1}(R)^{a_\nu}
            \bfE_{N,2}(R)^{b_\nu}
            \bfM_N(R)^{c_\nu}
            \bfL_N(R)^{d_\nu}
        \bigg]
    \bigg|
    \leq
    \sum_{j_1,\ldots,j_q=1}^n
    \prod_{\ell=1}^q
    \big|
        [\bfQ_\ell]_{j_\ell,j_{\ell+1}}
    \big|.
\end{equation*}
Since at least one of the factors is either $\bfE_{N,1}(R)$ or
$\bfE_{N,2}(R)$, cyclicity of the trace allows us to assume that
$\bfQ_q$ is such a factor.

We first consider the case $r>1$, including the case $R\to\infty$. 
Since $R\to r>1$, for all $1\leq j,k\leq n$, the variables $x$ and $y$ in \eqref{def of x and y} are uniformly bounded away from
$1$ for $1\leq j,k\leq n$. Hence,
Lemmas~\ref{lem: element estimates bfE-n-R-1} and
\ref{lem: element estimates bfE-n-R-2} imply that the entries of
$\bfE_{N,1}(R)$ and $\bfE_{N,2}(R)$ are uniformly exponentially
small. Together with the uniform bounds for the remaining factors,
this gives 
\begin{equation*}
    \bigg|
        \Tr\bigg[
            \prod_{\nu=1}^p
            \bfE_{N,1}(R)^{a_\nu}
            \bfE_{N,2}(R)^{b_\nu}
            \bfM_N(R)^{c_\nu}
            \bfL_N(R)^{d_\nu}
        \bigg]
    \bigg|
    =
    O(e^{-cN})
\end{equation*}
for some $c>0$. Hence, it remains to consider $r\in(0,1]$.

Define the transition window
\begin{equation} \label{def of transition window}
    \mathcal I_n
    :=
    \bigg\{
        j\in\{1,\ldots,n\}:
        |j-nR^2|
        \leq
        (q+1)\sqrt n\log n
    \bigg\}.
\end{equation}
We first record the following localisation estimates. By
Lemmas~\ref{lem: element estimates bfE-n-R-1} and
\ref{lem: element estimates bfE-n-R-2}, together with the quadratic
behaviour of the corresponding rate functions around their minima,
there exists $c>0$ such that
\begin{align}
\begin{cases} \label{estimates for EN12 outside localisation}
[\bfE_{N,1}(R)]_{j,k}
    = O(e^{-c(\log n)^2}),
    & \textup{if } |k-nR^2|>\sqrt n\log n,
\\[3pt]
[\bfE_{N,2}(R)]_{j,k}
    = O(e^{-c(\log n)^2}),
    & \textup{if }
    \max\{|j-nR^2|,|k-nR^2|\}>\sqrt n\log n.
\end{cases}
\end{align}
Moreover, using the bound
$\gamma(a,x)\leq\Gamma(a)$ in the definition of $\bfM_N(R)$,
together with Stirling's approximation as in the proof of
Lemma~\ref{lem: trace power bfM-n-R asymp}, we have
\begin{equation}
    [\bfM_N(R)]_{j,k}
    =
    O(e^{-c(\log n)^2}),
    \qquad
    \textup{if } |j-k|>\sqrt n\log n.
    \label{estimate for MN outside diagonal}
\end{equation}
Here and below, polynomial prefactors are absorbed by decreasing the
constant $c$.

Recall that, by cyclicity of the trace, we have chosen the ordering so
that $\bfQ_q$ is either $\bfE_{N,1}(R)$ or $\bfE_{N,2}(R)$. In
particular, the last factor in the index expansion is $ [\bfQ_q]_{j_q,j_1}.$
By \eqref{estimates for EN12 outside localisation}, its contribution
is exponentially small whenever
\begin{equation*}
    |j_1-nR^2|>\sqrt n\log n.
\end{equation*}
Hence, up to an exponentially small error, we may restrict the
summation to
\begin{equation*}
    |j_1-nR^2|\leq\sqrt n\log n.
\end{equation*} 

We now proceed successively along the ordered product. Suppose that
\begin{equation*}
    |j_\ell-nR^2|
    \leq
    \ell\sqrt n\log n.
\end{equation*}
If $\bfQ_\ell=\bfM_N(R)$ and
\begin{equation*}
    |j_{\ell+1}-nR^2|
    >
    (\ell+1)\sqrt n\log n,
\end{equation*}
then
\begin{equation*}
    |j_{\ell+1}-j_\ell|
    >
    \sqrt n\log n,
\end{equation*}
so the corresponding contribution is exponentially small by \eqref{estimate for MN outside diagonal}. If
$\bfQ_\ell$ is either $\bfE_{N,1}(R)$ or $\bfE_{N,2}(R)$, the same
conclusion follows directly from the corresponding entrywise estimate \eqref{estimates for EN12 outside localisation}.
Iterating this argument shows that, up to an error of order
$O(e^{-c(\log n)^2})$, all indices $j_1,\ldots,j_q$ may be restricted
to the transition window $\mathcal I_n$ in \eqref{def of transition window}.

For $j,k\in\mathcal I_n$, we have $j,k=nR^2+O(\sqrt n\log n)$. Using
$\gamma(a,z)\leq \Gamma(a)$ and the log-convexity of the gamma
function, we obtain
\begin{align*}
0\leq [\bfM_N(R)]_{j,k}
&\leq
\frac{\Gamma(j+k-\frac32)}
{\sqrt{2\pi\Gamma(2j-1)\Gamma(2k-1)}} \leq
\frac{1}{\sqrt{2\pi}}
\frac{\Gamma(j+k-\frac32)}
{\Gamma(j+k-1)}
=
O(n^{-1/2}),
\end{align*}
uniformly for $j,k\in\mathcal I_n$. Similarly,
Lemmas~\ref{lem: element estimates bfE-n-R-1} and
\ref{lem: element estimates bfE-n-R-2} give
\begin{equation*}
    [\bfE_{N,1}(R)]_{j,k}
    =
    O(n^{-1/2}),
    \qquad
    [\bfE_{N,2}(R)]_{j,k}
    =
    O(n^{-1/2}).
\end{equation*}
Note that by definition \eqref{def of transition window}, we have
\begin{equation*}
    |\mathcal I_n|
    =
    O(\sqrt n\log n).
\end{equation*}
Therefore, the contribution from the localised index set is bounded by
\begin{equation*}
    O\big(
        |\mathcal I_n|^q n^{-q/2}
    \big)
    =
    O\big(
        (\log n)^q
    \big)
    =
    o(\sqrt n).
\end{equation*}
Combining this estimate with the exponentially small contribution
from the complement of $\mathcal I_n$ proves the assertion.
\end{proof}

\section{Proof of the main results} \label{sec: proof main rslt}

In this section, we complete the proof of
Theorem~\ref{thm: cumulants} by combining the asymptotic estimates
established in the previous section. Proposition~\ref{prop: number variance}
then follows as the special case corresponding to the second-order
cumulants. Moreover, since all joint cumulants of order at least two
are of the same order, the higher-order cumulants become negligible
under the natural fluctuation scaling. This immediately yields the
bivariate central limit theorem stated in Corollary~\ref{cor: CLT}.

\begin{proof}[Proof of Theorem~\ref{thm: cumulants}]
We first prove the asymptotic formulas \eqref{joint cumulants bulk main thm}, \eqref{joint cumulants edge main thm} and \eqref{joint cumulants outside main thm}. 
Fix nonnegative integers $j$ and $k$ satisfying $j+k\geq2$, and set  
    \begin{equation}
        \mathfrak{C}_{j,k} 
        = \frac{1}{j!k!} \kappa_{j,k}(\cN_N^{\rm r}(R), \cN_N^{\rm c}(R)).
    \end{equation}
By Proposition~\ref{Prop_finite N formula} and
\eqref{eq: trace log expansion}, we have
\begin{equation*}
    \begin{split}
        \mathfrak{C}_{j,k}
        &=
        [t^ju^k]
        \sum_{m=1}^{\infty}
        \frac{(-1)^{m-1}}{m}
        \Tr\biggl[
            \Big(
                (e^{2t}-e^{2u})\bfM_N(R)
                -(e^{2t}-e^t)\bfE_{N,1}(R)
                +(e^{2u}-1)
                \bigl(
                    \bfL_N(R)-\bfE_{N,2}(R)
                \bigr)
            \Big)^m
        \biggr].
    \end{split}
\end{equation*} 
Each scalar coefficient appearing inside the matrix power vanishes at
$(t,u)=(0,0)$. Hence every monomial $t^{\ell_1}u^{\ell_2}$ in the
$m$-th summand satisfies $\ell_1+\ell_2\geq m.$ 
It follows that only the terms with $m\leq j+k$ contribute to the
coefficient of $t^ju^k$. 

We now expand each matrix power into noncommutative words.
Applying
Lemmas~\ref{lem: trace power mixed products only bfM bfL} and
\ref{lem: trace power mixed products including bfE}
to each word, we obtain, as $N=2n\to\infty$,
\begin{equation*}
    \begin{split}
        \mathfrak{C}_{j,k}
        &=
        [t^ju^k]
        \sum_{m=1}^{j+k}
        \frac{(-1)^{m-1}}{m}
        \biggl[
            (e^{2u}-1)^m
            \Tr\bigl[\bfL_N(R)^m\bigr]
        \\
        &\qquad\qquad
            +
            \sum_{\ell=1}^m
            \binom{m}{\ell}
            (e^{2t}-e^{2u})^\ell
            (e^{2u}-1)^{m-\ell}
            \Tr\bigl[\bfM_N(R)^\ell\bigr]
        \biggr]
        +o(\sqrt{n}).
    \end{split}
\end{equation*}
 Indeed, for each $\ell\in\{1,\ldots,m\}$, there are
$\binom{m}{\ell}$ words containing exactly $\ell$ factors of
$\bfM_N(R)$ and $m-\ell$ factors of $\bfL_N(R)$, and the trace of
each such word agrees with
$\Tr[\bfM_N(R)^\ell]$ up to an error of order $o(\sqrt n)$.

Rearranging the finite sums gives
\begin{equation*}
    \begin{split}
        \mathfrak{C}_{j,k}
        &=
        [t^ju^k]
        \bigg(
            \sum_{m=1}^{j+k}
            \frac{(-1)^{m-1}}{m}
            (e^{2u}-1)^m
            \Tr\bigl[\bfL_N(R)^m\bigr]
        \\
        &\qquad
            +
            \sum_{\ell=1}^{j+k}
            (e^{2t}-e^{2u})^\ell
            \Tr\bigl[\bfM_N(R)^\ell\bigr]
            \sum_{m=\ell}^{j+k}
            \frac{(-1)^{m-1}}{m}
            \binom{m}{\ell}
            (e^{2u}-1)^{m-\ell}
        \bigg)
        +o(\sqrt{n}).
    \end{split}
\end{equation*} 
Under the coefficient extraction $[t^ju^k]$, the upper limit of the
innermost summation may be replaced by infinity, since the additional
terms have total degree greater than $j+k$. We then use the identity
\begin{equation*}
    \sum_{m=\ell}^{\infty}
    \frac{(-1)^{m-1}}{m}
    \binom{m}{\ell}
    x^{m-\ell}
    =
    \frac{(-1)^{\ell-1}}{\ell}
    (1+x)^{-\ell}
\end{equation*}
and deduce
    \begin{align} \label{eq: joint cumulants reduced to pure traces}
        \mathfrak{C}_{j,k} = [t^j u^k] \sum_{m = 1}^{j+k} \frac{(-1)^{m-1}}{m} \bigg(   (e^{2u}-1)^m \Tr \big[\bfL_N(R)^m \big] + (e^{2(t-u)}-1)^m \Tr\big[ \bfM_N(R)^m \big]  \bigg) + o(\sqrt{n}).
    \end{align}

Substituting the asymptotic formulas from
Lemmas~\ref{lem: trace power bfL-n-R asymp} and
\ref{lem: trace power bfM-n-R asymp} into
\eqref{eq: joint cumulants reduced to pure traces}, we obtain 
    \begin{align*}
        \mathfrak{C}_{j,k} = [t^j u^k] \sum_{m = 1}^{j+k} \frac{(-1)^{m-1}}{m } \bigg(  (e^{2u}-1)^m \Big( (r^2 \wedge 1)n + \alpha_{m,r,s}\sqrt{n} \Big) + (e^{2(t-u)}-1)^m (r \wedge 1)\sqrt{\frac{n}{\pi m}} \,  \bigg) + o(\sqrt{n}).
    \end{align*}
Since the coefficient of $t^ju^k$ is unchanged if the summations are
extended over all positive integers, by \eqref{def of polylog}, we have 
    \begin{equation} \label{eq: joint cumulants before coefficient extraction}
        \mathfrak{C}_{j,k} = [t^j u^k] \bigg(  2 u (r^2 \wedge 1) n + \sqrt{n} g(u) - (r \wedge 1)\sqrt{\frac{n}{\pi}} \Li_{3/2}(1-e^{2(t-u)})   \bigg) + o(\sqrt{n}),
    \end{equation}
    where
    \begin{equation}
        g(u) := r \int_{-\infty}^{x_{\rm max}} \bigg( \log\Big( 1 + (e^{2u}-1) \frac{\erfc(x)}{2} \Big) - 2u \mathbbm{1}_{\{x<s\}} \bigg) \,dx,
    \end{equation}
    with
    \begin{equation}
        x_{\rm max} := \begin{dcases}
            \infty, & \text{if } 0 < r <1,
            \\
            s, & \text{if } r =1,
            \\
            -\infty, & \text{if } 1 < r \leq \infty.
        \end{dcases}
    \end{equation}

 It remains to extract the relevant coefficients.
We use the expansion
\begin{equation}
    \Li_{3/2}(1-e^z)
    =
    \sum_{q=1}^{\infty}
    \frac{z^q}{q!}
    \sum_{\ell=1}^q
    \frac{(-1)^\ell\ell!}{\ell^{3/2}}
    S(q,\ell),
\end{equation}
whose derivation can be found in the supplementary material to
\cite{PS18}. By \eqref{def of varkappa real}, this gives
\begin{equation}
    \begin{split}
        [t^ju^k]
        \biggl[
            -\Li_{3/2}
            \bigl(
                1-e^{2(t-u)}
            \bigr)
        \biggr]
        &=
        \frac{2(-1)^k}{j!k!}
        \varkappa_{j+k}^{\rm r}.
    \end{split}
    \label{eq: coefficient polylog real cumulants}
\end{equation}

We next compute the coefficients of $g$. For $q\in(0,1)$ and
$k\geq2$, we have
\begin{equation}
    \begin{split}
        [u^k]
        \log\Bigl(
            1+(e^{2u}-1)q
        \Bigr)
        &=
        -\frac{2^k}{k!}
        \Li_{1-k}\biggl(
            -\frac{q}{1-q}
        \biggr).
    \end{split}
\end{equation}
By using $ \erfc(-x)=2-\erfc(x),$  we obtain
\begin{equation}
    \begin{split}
        [u^k]g(u)
        &=
        -\frac{2^kr}{k!}
        \int_{-\infty}^{x_{\rm max}}
        \Li_{1-k}\biggl(
            -\frac{\erfc(x)}{\erfc(-x)}
        \biggr)
        \,dx.
    \end{split}
\end{equation}
For $k\geq2$, the inversion identity
\[
    \Li_{1-k}(z^{-1})
    =
    (-1)^k\Li_{1-k}(z)
\]
and the definition \eqref{def of varkappa total} of $\varkappa_k(s)$ yield
\begin{equation}
    [u^k]g(u)
    =
    \frac{2}{k!\sqrt{\pi}}
    \begin{cases}
        r\varkappa_k(\infty),
        &\textup{if } 0<r<1,
        \smallskip 
        \\
        \varkappa_k(s),
        & \textup{if }r=1,
        \smallskip 
        \\
        0,
        & \textup{if } 1<r\leq\infty.
    \end{cases}
    \label{eq: coefficient of g}
\end{equation}
Moreover, since $g$ depends only on $u$, we have $ [t^ju^k]g(u)=0$ whenever $j\geq1$.
Finally, the linear term $ 2u(r^2\wedge1)n$  does not contribute because $j+k\geq2$. Combining
\eqref{eq: joint cumulants before coefficient extraction},
\eqref{eq: coefficient polylog real cumulants}, and
\eqref{eq: coefficient of g}  we obtain
\eqref{joint cumulants bulk main thm},
\eqref{joint cumulants edge main thm}, and
\eqref{joint cumulants outside main thm}.

    It remains to derive the corresponding asymptotics \eqref{cumulants_total bulk}, \eqref{cumulants_total edge} and \eqref{cumulants_total outside} for the cumulants of
    the total counting function $ \mathcal N_N(R)$.  
    By the relation between ordinary and joint cumulants, we have 
    \begin{equation}\label{eq: cumulant total from joint}
    \kappa_p\bigl(\mathcal N_N(R)\bigr)
    =
    \sum_{j=0}^{p}
    \binom{p}{j}
    \kappa_{j,p-j}
    \bigl(
    \mathcal N_N^{\rm r}(R),
    \mathcal N_N^{\rm c}(R)
    \bigr),
    \qquad p\ge2.
    \end{equation}
    Note that in the bulk regime, by \eqref{joint cumulants bulk main thm} and \eqref{eq: cumulant total from joint}, we obtain 
    \begin{align*}
        \lim_{N\to\infty}
    \frac{\kappa_p\bigl(\mathcal N_N(R)\bigr)}
    {R\sqrt{2N/\pi}}
    &
    =
    \varkappa_p(\infty)
    +
    \varkappa_p^{\rm r}
    \sum_{j=0}^{p}
    \binom{p}{j}(-1)^{p-j} =
    \varkappa_p(\infty),
    \end{align*}
    where the last equality follows from
    \[
    \sum_{j=0}^{p}
    \binom{p}{j}(-1)^{p-j}
    =
    (1-1)^p
    =
    0.
    \]
    Since $\varkappa_p(\infty)=0$ for odd $p$, this yields the stated
    bulk formula.
    The same cancellation occurs in the other regimes. Indeed, for the edge regime, by \eqref{joint cumulants edge main thm} and \eqref{eq: cumulant total from joint}, we have
    \begin{align*}
    \lim_{N\to\infty}
    \frac{\kappa_p\bigl(\mathcal N_N(R)\bigr)}
    {\sqrt{2N/\pi}}= \varkappa_p(s)
    +
    \varkappa_p^{\rm r}
    \sum_{j=0}^{p}
    \binom{p}{j}(-1)^{p-j}= \varkappa_p(s),
    \end{align*}
    and for the outside, by \eqref{joint cumulants outside main thm} and \eqref{eq: cumulant total from joint}, we have
    \begin{align*}
    \lim_{N\to\infty}
    \frac{\kappa_p\bigl(\mathcal N_N(R)\bigr)}
    {\sqrt{2N/\pi}}
    &=
    \varkappa_p^{\rm r}
    \sum_{j=0}^{p}
    \binom{p}{j}(-1)^{p-j}
    =0.
    \end{align*}
    This completes the proof of the assertions \eqref{cumulants_total bulk}, \eqref{cumulants_total edge} and \eqref{cumulants_total outside}. 
\end{proof}

\bibliographystyle{abbrv}

\begin{thebibliography}{100}

 
\bibitem{AB23} G. Akemann and S.-S. Byun, \emph{The product of $m$ real $N \times N$ Ginibre matrices: Real eigenvalues in the critical regime $m=O(N)$}, Constr. Approx. \textbf{59} (2024), 31--59. 

\bibitem{ABE23} G. Akemann, S.-S. Byun and M. Ebke, \emph{Universality of the number variance in rotational invariant two-dimensional Coulomb gases}, J. Stat. Phys. \textbf{190} (2023), 1--34. 

\bibitem{ABES23} G.~Akemann, S.-S. Byun, M. Ebke and G. Schehr, \emph{Universality in the number variance and counting statistics of the real and symplectic Ginibre ensemble}, J. Phys. A \textbf{56} (2023), 495202.

\bibitem{ABL25} G. Akemann, S.-S. Byun and Y.-W. Lee, \emph{The probability of almost all eigenvalues being real for the elliptic real Ginibre ensemble}, Nonlinearity \textbf{38} (2025), 105015.


\bibitem{AIS14} G. Akemann, J. R. Ipsen and E. Strahov, \emph{Permanental processes from products of complex and quaternionic induced Ginibre ensembles}, Random Matrices Theory Appl. \textbf{3} (2014), 1450014.

\bibitem{AK07} G.~Akemann and E.~Kanzieper, \emph{Integrable structure of Ginibre's ensemble of real random matrices and a {P}faffian integration theorem}, J. Stat. Phys. \textbf{129} (2007), 1159--1231. 

\bibitem{AFLS25} M. Allard, P. J. Forrester, S. Lahiry and B.-J. Shen, \emph{Partition function of 2D Coulomb gases with radially symmetric potentials and a hard wall}, arXiv:2506.14738.

\bibitem{ATW14} R.~Allez, J.~Touboul and G.~Wainrib, \emph{Index distribution of the Ginibre ensemble}, J. Phys. A \textbf{47} (2014), 042001.

\bibitem{ACCL23} Y. Ameur, C. Charlier, J. Cronvall and J. Lenells, \emph{Exponential moments for disk counting statistics at the hard edge of random normal matrices}, J. Spectr. Theory {\bf 13} (2023), 841--902.

\bibitem{ACCL24} Y. Ameur, C. Charlier, J. Cronvall and J. Lenells, \emph{Disk counting statistics near hard edges of random normal matrices: the multi-component regime}, Adv. Math. \textbf{441} (2024), 109549.

\bibitem{ACM24} Y. Ameur, C. Charlier and P. Moreillon, \emph{Eigenvalues of truncated unitary matrices: disk counting statistics}, Monatsh. Math. {\bf 204} (2024), 197--216.

\bibitem{BCD26} L. Benigni, S. Coste and G. Dubach, \emph{The delocalization of eigenvectors of real elliptic matrices}, arXiv:2602.10264.

\bibitem{BFK21} G. Ben Arous, Y. V. Fyodorov and B. A. Khoruzhenko, \emph{Counting equilibria of large complex systems by instability index}, Proc. Natl. Acad. Sci. USA \textbf{118} (2021), e2023719118. 

\bibitem{BS09} A. Borodin and C. D. Sinclair, \emph{The Ginibre ensemble of real random matrices and its scaling limits}, Comm. Math. Phys. \textbf{291} (2009), 177--224. 

\bibitem{BC25} S.-S. Byun and C. Charlier, \emph{On the characteristic polynomial of the eigenvalue moduli of random normal matrices}, Constr. Approx. \textbf{62} (2025), 471--521.

\bibitem{BF25} S.-S.~Byun and P. J.~Forrester, \emph{Progress on the study of the Ginibre ensembles}, Springer Singapore, KIAS Springer Series in Mathematics (2025).

\bibitem{BF25a} S.-S.~Byun and P. J.~Forrester, \emph{Electrostatic computations for statistical mechanics and random matrix applications}, 2025 MATRIX Annals Part II, MATRIX Book Series, pp. 211--254. 

\bibitem{BKLL23} S.-S. Byun, N.-G. Kang, J. O. Lee and J. Lee, \emph{Real eigenvalues of elliptic random matrices}, Int. Math. Res. Not. \textbf{2023} (2023), 2243--2280.

\bibitem{BL24} S.-S. Byun and Y.-W. Lee, \emph{Finite size corrections for real eigenvalues of the elliptic Ginibre matrices}, Random Matrices Theory Appl. \textbf{13} (2024), 2450005. 

\bibitem{BLO26} S.-S. Byun, Y.-W. Lee and S. Oh, \emph{Upper tail large deviations for extremal eigenvalues of the real, complex and symplectic elliptic Ginibre matrices}, J. Phys. A \textbf{59} (2026), 305203.

\bibitem{BJLS25} S.-S. Byun, J. Jalowy, Y.-W. Lee and G. Schehr, \emph{Moderate-to-large deviation asymptotics for real eigenvalues of the elliptic Ginibre matrices}, Bernoulli (to appear), arXiv:2511.09191.  

\bibitem{BMS25} S.-S.~Byun, L. D.~Molag and N.~Simm, \emph{Large deviations and fluctuations of real eigenvalues of elliptic random matrices}, Electron. J. Probab. \textbf{30} (2025), 40 pp.

\bibitem{BN25} S.-S. Byun and K. Noda, \emph{Real eigenvalues of asymmetric Wishart matrices: Expected number, global density and integrable structure}, arXiv:2503.14942. 

\bibitem{BP26} S.-S. Byun and S. Park, \emph{Large gap probabilities of complex and symplectic spherical ensembles with point charges}, J. Funct. Anal. \textbf{290} (2026), 111260.
 

\bibitem{Ch22} C. Charlier, \emph{Asymptotics of determinants with a rotation-invariant weight and discontinuities along circles}, Adv. Math. \textbf{408} (2022), 108600.

\bibitem{Ch23} C. Charlier, \emph{Large gap asymptotics on annuli in the random normal matrix model}, Math. Ann. \textbf{388} (2024), 3529--3587.

\bibitem{CL23} C. Charlier and J. Lenells, \emph{Exponential moments for disk counting statistics of random normal matrices in the critical regime}, Nonlinearity {\bf 36} (2023), 1593--1616.

  
\bibitem{CEK26} G. Cipolloni, L. Erd\H{o}s and O. Kolupaiev, \emph{The eigenvalues of i.i.d. matrices are hyperuniform}, arXiv:2602.17628v2.

 
\bibitem{CESX22} G. Cipolloni, L. Erd\H{o}s, D. Schr\"oder and Y. Xu, \emph{Directional extremal statistics for Ginibre eigenvalues}, J. Math. Phys. \textbf{63} (2022), 103303.

\bibitem{CL26} G. Cipolloni and B. Landon, \emph{Gaussian multiplicative chaos for i.i.d.\ matrices}, arXiv:2605.29962.

\bibitem{CMV16} F. D. Cunden, F. Mezzadri and P. Vivo, \emph{Large deviations of radial statistics in the two-dimensional one-component
plasma}, J. Stat. Phys. \textbf{164} (2016), 1062--1081.

\bibitem{Ed97} A. Edelman, \emph{The probability that a random real Gaussian matrix has k real eigenvalues, related distributions, and the circular law}, J. Multivariate Anal. \textbf{60} (1997), 203--232. 

\bibitem{EKS94}
A. Edelman, E. Kostlan and M. Shub, \emph{How many eigenvalues of a random matrix are real?} J. Amer. Math. Soc. \textbf{7} (1994), 247--267.
 

\bibitem{FL22} M. Fenzl and G. Lambert, \emph{Precise deviations for disk counting statistics of invariant determinantal processes}, Int. Math. Res. Not. {\bf 2022} (2022), 7420--7494.


\bibitem{FS23a} W. FitzGerald and N. Simm, \emph{Fluctuations and correlations for products of real asymmetric random matrices}, Ann. Inst. Henri Poincar\'e Probab. Stat. \textbf{59} (2023), 2308--2342.


\bibitem{Fo10} P. J. Forrester, \emph{Log-gases and random matrices}, Princeton University Press, Princeton, NJ (2010).


\bibitem{Fo14} P. J. Forrester, \emph{Probability of all eigenvalues real for products of standard Gaussian matrices}, J. Phys. A \textbf{47} (2014), 065202. 

\bibitem{Fo15a}
P. J. Forrester, \emph{Diffusion processes and the asymptotic bulk gap probability for the real Ginibre ensemble}, J. Phys. A \textbf{48} (2015), 324001.


\bibitem{Fo24} P. J.~Forrester, \emph{Local central limit theorem for real eigenvalue fluctuations of elliptic GinOE matrices}, Electron. Commun. Probab. \textbf{29} (2024), 1--11.

\bibitem{Fo25} P. J.~Forrester, \emph{Asymptotics of the real eigenvalue distribution for the real spherical ensemble}, J. Stat. Phys. \textbf{193} (2026), 56.

\bibitem{FI16} P. J. Forrester and J. R. Ipsen, \emph{Real eigenvalue statistics for products of asymmetric real Gaussian matrices}, Linear Algebra Appl. \textbf{510} (2016), 259--290.

\bibitem{FIK20} P. J. Forrester, J. R. Ipsen and S. Kumar, \emph{How many eigenvalues of a product of truncated orthogonal matrices are real?}, Exp. Math. \textbf{29} (2020), 276--290.

\bibitem{FK18}  P. J. Forrester and S. Kumar, \emph{The probability that all eigenvalues are real for products of truncated real orthogonal random matrices}, J. Theoret. Probab. \textbf{31} (2018), 2056--2071.

\bibitem{FM09} P. J. Forrester and A. Mays, \emph{A method to calculate correlation functions for $\beta = 1$ random matrices of odd size}, J. Stat. Phys. \textbf{134} (2009), 443--462.

\bibitem{FM12} P. J. Forrester and A. Mays, \emph{Pfaffian point processes for the Gaussian real generalised eigenvalue problem}, Prob. Theory and Rel. Fields \textbf{154} (2012), 1--47.


\bibitem{FN07} P. J. Forrester and T. Nagao, \emph{Eigenvalue statistics of the real Ginibre ensemble}, Phys. Rev. Lett. \textbf{99} (2007), 050603.

\bibitem{FN08} P. J. Forrester and T. Nagao, \emph{Skew orthogonal polynomials and the partly symmetric real Ginibre ensemble}, J. Phys. A \textbf{41} (2008), 375003.


\bibitem{Fyo16} Y. V. Fyodorov, \emph{Topology trivialization transition in random nongradient autonomous ODEs on a sphere}, J. Stat. Mech. \textbf{2016} (2016), 124003. 

\bibitem{FK16} Y. V. Fyodorov and B. A. Khoruzhenko, \emph{Nonlinear analogue of the May--Wigner instability transition}, Proc. Natl. Acad. Sci. USA \textbf{113} (2016), 6827--6832.


\bibitem{GP19} M. Gebert and M. Poplavskyi, \emph{On pure complex spectrum for truncations of random orthogonal matrices and Kac polynomials}, arXiv:1905.03154. 

\bibitem{GLX24} A. Goel, P. Lopatto and X. Xie, \emph{Central limit theorem for the complex eigenvalues of Gaussian random matrices}, Electron. Commun. Probab. \textbf{29} (2024), Paper No. 16, 13 pp.

\bibitem{GDMK26} P. Goswami, A. Dhar, S. N. Majumdar and A. Kundu, \emph{Entanglement scaling and full counting statistics in excited states of two-dimensional rotating fermions}, arXiv:2608.05722. 


\bibitem{KA05} E. Kanzieper and G. Akemann, \emph{Statistics of real eigenvalues in Ginibre's ensemble of random real matrices},  Phys. Rev. Lett. {\bf 95} (2005), 230201.


\bibitem{KPTTZ15} E.~Kanzieper, M.~Poplavskyi, C.~Timm, R.~Tribe and O.~Zaboronski, \emph{What is the probability that a large random matrix has no real eigenvalues?}, Ann.~Appl.~Probab. \textbf{26} (2016), 2733--2753.

\bibitem{Kiv24} P. Kivimae, \emph{Concentration of equilibria and relative instability in disordered non-relaxational dynamics}, Comm. Math. Phys. \textbf{405}, 289 (2024). 


\bibitem{LGCCKMS19}
B. Lacroix-A-Chez-Toine, J. A. M. Garz{\'o}n, C. S. H. Calva, I. P. Castillo, A. Kundu, S. N. Majumdar and G. Schehr, \emph{Intermediate deviation regime for the full eigenvalue statistics in the complex Ginibre ensemble},  Phys. Rev. E {\bf 100} (2019), 012137.


\bibitem{LGMS18} B. Lacroix-A-Chez-Toine, A. Grabsch, S. N. Majumdar and G. Schehr, \emph{Extremes of 2d Coulomb gas: universal intermediate deviation regime}, J. Stat. Mech. Theory Exp. {\bf 2018} (2018), 013203.

\bibitem{LMS19}
B. Lacroix-A-Chez-Toine, S. N. Majumdar and G. Schehr, \emph{Rotating trapped fermions in two dimensions and the complex Ginibre ensemble: Exact results for the entanglement entropy and number variance}, Phys. Rev. A \textbf{99} (2019), 021602.

\bibitem{LMS22} A. Little, F. Mezzadri and N. Simm, \emph{On the number of real eigenvalues of a product of truncated orthogonal random matrices}, Electron. J. Probab. \textbf{27} (2022), 32 pp.

\bibitem{LM26} P. Lopatto and M. Meeker, \emph{Smallest gaps between eigenvalues of real Gaussian matrices}, Bernoulli \textbf{32} (2026), 325--349. 

\bibitem{LO25} P. Lopatto and M. Otto, \emph{Maximum gap in complex Ginibre matrices}, arXiv:2501.04611. 

\bibitem{May72} R. M. May, \emph{Will a large complex system be stable?}, Nature {\bf 238} (1972), 413.


\bibitem{MMS14} R. Marino, S.~N. Majumdar and G. Schehr, \emph{Phase transitions and edge scaling of number variance in Gaussian random matrices}, Phys. Rev. Lett. {\bf 112} (2014), 254101.

\bibitem{MMO26} J. Marzo, L.~D. Molag and J. Ortega-Cerd\`a, \emph{Universality for fluctuations of counting statistics of random normal matrices}, J. Lond. Math. Soc. (2) {\bf 113} (2026), 35 pp.


\bibitem{Noda25} K. Noda, \emph{Partition functions of two-dimensional Coulomb gases with circular root- and jump-type singularities}, arXiv:2510.00843. 

 \bibitem{MAD24} L. D. Molag, G. Akemann and M. Duits, \emph{Fluctuations in various regimes of non-Hermiticity and a holographic principle}, Ann. Henri Poincar\'e (Online), arXiv:2412.15854.

\bibitem{MPTW16} L. C. G. del Molino, K. Pakdaman, J. Touboul and G. Wainrib, \emph{The real Ginibre ensemble with $k=O(n)$ real eigenvalues}, J. Stat. Phys. \textbf{163} (2016), 303--323. 


\bibitem{NIST}  F. W. J. Olver, D. W. Lozier, R. F. Boisvert and C. W. Clark, eds. \emph{NIST Handbook of Mathematical Functions}, Cambridge: Cambridge University Press, 2010.


\bibitem{PS18} M. Poplavskyi and G. Schehr, \emph{Exact persistence exponent for the 2d-diffusion equation and related Kac polynomials}, Phys. Rev. Lett. {\bf 121} (2018), 150601, see Supp. Mat. in https://arxiv.org/abs/1806.11275
 

\bibitem{Rider03} B. Rider, \emph{A limit theorem at the edge of a non-Hermitian random matrix ensemble}, J. Phys. A \textbf{36} (2003), 3401--3410. 

\bibitem{RS14} B. Rider and C. D. Sinclair, \emph{Extremal laws for the real Ginibre ensemble}, Ann. Appl. Probab. \textbf{24} (2014), 1621--1651.
 

\bibitem{Si07} C. D. Sinclair, \emph{Averages over {G}inibre's ensemble of random real matrices}, Int. Math. Res. Not. \textbf{2007} (2007), rnm015, 15 pp.

\bibitem{Si17} N. Simm, \emph{Central limit theorems for the real eigenvalues of large Gaussian random matrices}, Random Matrices Theory Appl. \textbf{6} (2017), 1750002. 


\bibitem{Si17a} N. Simm, \emph{On the real spectrum of a product of Gaussian matrices}, Electron. Commun. Probab. \textbf{22} (2017), 11.


\bibitem{SLMS21} N. R. Smith, P. Le Doussal, S. N. Majumdar and G. Schehr, \emph{Full counting statistics for interacting trapped fermions}, SciPost Phys. \textbf{11} (2021), 110.

\bibitem{SLMS22} N. R. Smith, P. Le Doussal, S. N. Majumdar and G. Schehr, \emph{Counting statistics for noninteracting fermions in a rotating trap}, Phys. Rev. A \textbf{105} (2022), 043315. 

\bibitem{So07} H.-J. Sommers, \emph{Symplectic structure of the real Ginibre ensemble}, J. Phys. A \textbf{40} (2007), F671.

\bibitem{SW08} H.-J. Sommers and W.~Wieczorek, \emph{General eigenvalue correlations for the real Ginibre ensemble}, J. Phys. A \textbf{41} (2008), 405003.

\bibitem{Sos02} A. Soshnikov, \emph{Gaussian limit for determinantal random point fields}, Ann. Probab. \textbf{30} (2002), 171--187.

\bibitem{TV15} T. Tao and V. Vu, \emph{Random matrices: Universality of local spectral statistics of non-Hermitian matrices}, Ann. Probab. \textbf{43} (2015), 782--874. 


\bibitem{XZ24} Y. Xu and Q. Zeng, \emph{Large deviations for the extremal eigenvalues of Ginibre ensembles}, Acta Math. Sin. \textbf{42} (2026), 1555--1571.


 \end{thebibliography}

\end{document}